\newtheorem{jremark}{Remark}
\let\c@proposition\c@theorem
\let\c@corollary\c@theorem
\let\c@lemma\c@theorem
\let\c@definition\c@theorem
\let\c@example\c@theorem
\let\c@remark\c@theorem
\let\c@jremark\c@theorem
\let\c@table\c@figure
\renewcommand*{\top}{%
  {\mathpalette\@transpose{}}%
}
\newcommand*{\@transpose}[2]{%
  % #1: math style
  % #2: unused
  \raisebox{\depth}{$\m@th#1\mathsf{T}$}%
}
\DeclareMathOperator{\rank}{rank}
 \journalname{Journal of Global Optimization}
\begin{document}

\title{Experimental analysis of local searches for sparse
reflexive generalized inverses}
%\titlerunning{Short form of title}        % if too long for running head
\author{Marcia Fampa \and Jon Lee \and \\ Gabriel Ponte \and Luze Xu}%
\authorrunning{Fampa, Lee, Ponte \& Xu} % if too long for running head

\institute{M. Fampa and G. Ponte\at
            Federal University of Rio de Janeiro \\
              \email{fampa@cos.ufrj.br}, \email{gabrielponte@poli.ufrj.br}            %  \\
%             \emph{Present address:} of F. Author  %  if needed
           \and
           J. Lee and L. Xu \at
            University of Michigan \\
              \email{jonxlee@umich.edu},     \email{xuluze@umich.edu}
}

%\address{}%
%\email{}%
%
%\thanks{}%
%\subjclass{}%
%\keywords{}%

\date{Received: date / Accepted: date}
%\date{\today}%
%\dedicatory{}%
%\commby{}%
% ----------------------------------------------------------------

\maketitle
\begin{abstract}
The well-known M-P (Moore-Penrose) pseudoinverse is used in several linear-algebra applications; for example, to compute least-squares solutions of inconsistent  systems of linear equations. Irrespective of whether a given matrix is sparse, its M-P pseudoinverse can be completely dense, potentially leading to high computational burden and numerical difficulties, especially when we are dealing with high-dimensional matrices. The  M-P pseudoinverse is uniquely characterized by four properties, but not all of them need to be satisfied for some applications. In this context, Fampa and Lee (Oper. Res. Letters,  46:605--610, 2018) and Xu, Fampa, Lee and Ponte (SIAM J. on Optimization, to appear)  propose local-search procedures to construct sparse block-structured generalized inverses that satisfy only some of the M-P  properties. (Vector) 1-norm minimization is used to induce sparsity and to keep the magnitude of the entries under control, and  theoretical results limit the distance between the 1-norm of the solution of the local searches and the minimum 1-norm of generalized inverses with corresponding properties.
We have implemented several local-search procedures based on results presented in these two papers and make here an experimental analysis of them, considering their application to randomly generated matrices of varied dimensions, ranks, and densities. Further, we carried out a case study on a real-world data set.
\keywords{generalized inverse \and sparse optimization \and local search \and Moore-Penrose pseudoinverse}
% \PACS{PACS code1 \and PACS code2 \and more}
% \subclass{MSC code1 \and MSC code2 \and more}
\end{abstract}
% ----------------------------------------------------------------

\section{Introduction}

The well-known M-P  (Moore-Penrose) pseudoinverse, independently discovered  by  E.H. Moore and R. Penrose, is used in several linear-algebra applications --- for example, to compute least squares solutions of inconsistent  systems of linear equations. If $A=U\Sigma V^\top$ is the real singular value decomposition of $A$ (see \cite{GVL1996}, for example),
then the
M-P pseudoinverse of $A$ can be defined as $A^{\dagger}:=V\Sigma^{\dagger} U^\top$, where $\Sigma^{\dagger}$
has the shape of the transpose of the diagonal matrix $\Sigma$, and is derived from $\Sigma$
by taking reciprocals of the non-zero (diagonal) elements of $\Sigma$ (i.e., the non-zero
singular values of $A$).
%Generalized inverses have an important role in linear algebra applications, when a given real  matrix $A\in\mathbb{R}^{m\times n}$ is not square or not invertible \cite{rao1971}.
%The most well-known generalized inverse  is the \emph{Moore-Penrose (M-P)  pseudoinverse},
%independently discovered by  E.H. Moore and R. Penrose.
%If $A=U\Sigma V^\top$ is the real singular value decomposition of $A$ (see \cite{GVL1996}, for example),
%then the
%M-P pseudoinverse of $A$ can be defined as $A^\dagger :=V\Sigma^\dagger  U^\top$, where $\Sigma^\dagger $
%has the shape of the transpose of the diagonal matrix $\Sigma$, and is derived from $\Sigma$
%by taking reciprocals of the non-zero (diagonal) elements of $\Sigma$ (i.e., the non-zero
%singular values of $A$).
The following theorem gives a fundamental characterization of the M-P pseudoinverse.
\begin{theorem}[\cite{Penrose}]
For $A\in\mathbb{R}^{m \times n}$, the M-P pseudoinverse $A^{\dagger}$ is the unique 
 $H\in\mathbb{R}^{n \times m}$ satisfying:
	\begin{align}
		& AHA = A \label{property1} \tag{P1}\\
		& HAH = H \label{property2} \tag{P2}\\
		& (AH)^{\top} = AH \label{property3} \tag{P3}\\
		& (HA)^{\top} = HA \label{property4} \tag{P4}
	\end{align}
\end{theorem}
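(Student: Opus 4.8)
The plan is to prove the two assertions of the theorem --- existence and uniqueness --- by essentially independent arguments, using only the SVD-based definition of $A^\dagger$ given above.

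\emph{Existence.} I would verify directly that the matrix $H := A^\dagger = V\Sigma^\dagger U^\top$ built from the real SVD $A = U\Sigma V^\top$ satisfies (P1)--(P4). Substituting this $H$ into each property and cancelling the orthogonal factors via $U^\top U = I_m$ and $V^\top V = I_n$, property (P1) collapses to $\Sigma\Sigma^\dagger\Sigma = \Sigma$, (P2) to $\Sigma^\dagger\Sigma\Sigma^\dagger = \Sigma^\dagger$, (P3) to the symmetry of $\Sigma\Sigma^\dagger$, and (P4) to the symmetry of $\Sigma^\dagger\Sigma$. Since $\Sigma^\dagger$ is, by construction, the transpose of $\Sigma$ with its nonzero diagonal entries inverted, both $\Sigma\Sigma^\dagger$ and $\Sigma^\dagger\Sigma$ are diagonal $0$/$1$ matrices, and all four reduced identities are then immediate. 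This part is purely mechanical.

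\emph{Uniqueness.} Suppose $H$ and $K$ both satisfy (P1)--(P4) for the same $A$. I would run the classical chain of substitutions. Using (P2) for $H$, then (P3) for $H$, then (P1) for $K$ in the transposed form $A^\top = A^\top K^\top A^\top$, then (P3) for both $H$ and $K$, and finally (P2) for $H$ once more, one obtains
\[
H = HAH = H(AH)^\top = HH^\top A^\top = HH^\top A^\top K^\top A^\top = H(AH)^\top(AK)^\top = H(AH)(AK) = HAK.
\]
Symmetrically, using (P2) for $H$, (P4) for $H$, (P1) for $K$, (P4) for both, and (P2) for $H$,
\[
H = HAH = (HA)^\top H = A^\top H^\top H = A^\top K^\top A^\top H^\top H = (KA)^\top(HA)^\top H = (KA)(HA)H = KAH.
\]
Interchanging the roles of $H$ and $K$ in this last chain gives $K = HAK$, and comparing with the first chain yields $H = HAK = K$.

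The computations are short and present no real obstacle; the only thing requiring care is the bookkeeping --- tracking which of the four properties, and for which of the two matrices, is invoked at each step, and keeping the transposes consistent. If one prefers a more structural argument for uniqueness, one can instead observe that (P1)+(P3) force $AH$ to equal the orthogonal projector onto the column space of $A$, and (P1)+(P4) force $HA$ to equal the orthogonal projector onto the row space of $A$ --- both objects being determined by $A$ alone --- after which the identity $H = HAH$ pins $H$ down uniquely; but the direct algebraic chain above is the cleanest route.
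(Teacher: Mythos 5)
Your proof is correct. The paper itself gives no proof of this statement --- it is quoted as a classical result with a citation to Penrose --- so there is nothing to compare against except the standard literature argument, which is exactly what you reproduce: direct verification of (P1)--(P4) for $V\Sigma^\dagger U^\top$ via the SVD (reducing everything to the diagonal $0$/$1$ projectors $\Sigma\Sigma^\dagger$ and $\Sigma^\dagger\Sigma$), followed by Penrose's substitution chains $H = HAK = K$ for uniqueness. Both chains check out step by step, and the observation that swapping $H$ and $K$ in the second chain yields $K = HAK$ closes the argument cleanly.
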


Following \cite{RohdeThesis}, we say that a \emph{generalized inverse} is any $H$ satisfying  \ref{property1}. The property \ref{property1} is particularly important in our context; without it,
the all-zero matrix --- extremely sparse and carrying no information at all about $A$ --- would
satisfy the other three properties.

A generalized inverse is \emph{reflexive} if it satisfies \ref{property2}.  Theorem 3.14 in \cite{RohdeThesis} states that: ($i$) if $H$ is a generalized inverse of $A$, then $\mathrm{rank}(H)\ge\mathrm{rank}(A)$, and ($ii$) a generalized inverse $H$ of $A$ is reflexive if and only if $\mathrm{rank}(H)=\mathrm{rank}(A)$. Therefore, enforcing \ref{property2}
gives us the lowest possible rank of a generalized inverse --- a very desirable property.

Finally, following \cite{XuFampaLee},  we say that $H$ is \emph{ah-symmetric} if it satisfies \ref{property3}. That is, ah-symmetric means that $AH$ is symmetric.
If $H$ is an ah-symmetric generalized inverse, then $\hat{x}:=Hb$ solves $\min\{\|Ax-b\|_2:~x\in\mathbb{R}^n\}$ (see \cite{FFL2016,campbell2009generalized}).
So  not  all of the M-P properties are required for a generalized inverse to solve a key problem.

Even if a given matrix is sparse, its M-P pseudoinverse can be completely dense, often leading to a high computational burden in its applications, especially when we are dealing with high-dimensional matrices.
 Therefore, to  avoid computations with  high-dimensional dense matrices,  it is interesting to consider the construction of sparse generalized inverses that satisfy only a proper  subset of $\{$\ref{property2}, \ref{property3}, \ref{property4}$\}$.   In this context, \cite{FampaLee2018ORL} and \cite{XuFampaLee} propose local-search procedures to construct reflexive generalized inverses,  ah-symmetric reflexive generalized inverses, and  in case $A$ is symmetric,  symmetric reflexive generalized inverses. The purpose of the procedures is the construction of sparser matrices than the M-P pseudoinverse, without losing some of its important properties. In   \cite{FampaLee2018ORL,XuFampaLee},  (vector) 1-norm minimization is used to induce sparsity (leading to less computational burden in applications) and to keep the magnitude of the entries under control (leading to better numerical stability in applications).  Therefore, at each iteration of the local-search procedures, the overall goal is to decrease the 1-norm of the constructed matrix $H$.
%, and, in case $A$ is symmetric, ($iii$) symmetric reflexive generalized inverses.

The generalized inverses constructed by the procedures have the following very nice features: they have block structure, i.e.,  they have all non-zero entries confined to a selected choice of columns (and, sometimes,  also of rows), they are reflexive,  they have a bounded number of non-zero entries, and they  have 1-norm within a provable factor of the minimum 1-norm of generalized inverses with corresponding properties.

Our goal in this paper is to develop and analyze through numerical experiments,  the performance of local-search procedures based on the ideas  presented  in \cite{FampaLee2018ORL,XuFampaLee}, and to see how tight are the bounds presented for the 1-norms of the constructed matrices $H$, considering  randomly generated input matrices $A$ with varied dimensions, ranks, and densities.  We have implemented different local-search procedures for each  case studied, more specifically, the cases where we construct ($i$) a reflexive generalized inverse,  ($ii$) an ah-symmetric  reflexive generalized inverse, and ($iii$) a symmetric reflexive generalized inverse.
We propose a method for constructing an initial solution for the local searches; interestingly, this
turns out to be a rather difficult numerical task at large scale, even though in theory it is
rather trivial. 
%The method proposed had very good performance in our numerical experiments and a Matlab implementation of it is now available at  \url{https://www.mathworks.com/matlabcentral/fileexchange/83638-linear-independent-rows-and-columns-generator}.
We propose and compare local searches  with updates performed with the best improvement (`BI') obtained in the neighborhood of the starting solution, and  with updates performed with the first improvement (`FI') obtained. We analyze local searches that   consider as the criterion for improvement, the increase in the absolute determinant of an  $r\times r$ non-singular submatrix of the given rank-$r$ matrix $A$, which are based on  theoretical results presented in \cite{FampaLee2018ORL,XuFampaLee}. These procedures are identified in the paper with the notation `det'.   We also propose a local search that  considers a more natural criterion for improvement, the decrease in the 1-norm of the constructed matrix $H$, and is identified with `norm'.
Observing the behavior of these local searches
leads us to combine the `det' with the  `norm' searches. Aiming at reaching matrices with smaller norms, we apply  hybrid procedures that perform local searches based on the decrease of the 1-norm of $H$, starting from the output of a local search based on the increase of the absolute determinant of the submatrix of $A$.

The algorithms proposed were coded in Matlab R2019b. %R2016a. 
To evaluate the solutions obtained by them, we  solve the linear programming (LP) problems described in the next sections, with Gurobi v.9.0.2. We ran the experiments on a
%computer with an Intel Core i7-7700 CPU 3.60 gigahertz, 32 gigabytes RAM, running under Windows %10 pro.
16-core machine (running Windows Server 2016 Standard):
two Intel Xeon CPU E5-2667 v4 processors
running at 3.20GHz, with 8 cores each, and 128 GB of memory.

In \S\ref{sec:ginv}, we present our results for generalized inverses.
In \S\ref{sec:ahsymginv},  we present our results for ah-symmetric  generalized inverses.
In \S\ref{sec:symginv}, we present our results for symmetric generalized inverses (applied to symmetric input matrices). In \S\ref{seccase}, we present a case study where we apply our algorithm for ah-symmetric generalized inverses to real data.
In \S\ref{sec:conc}, we make some brief concluding remarks.

Before continuing, we wish to mention that an earlier approach to
constructing sparse generalized inverses was developed in \cite{FFL2019}.
Unfortunately those methods, based on solving convex relaxations (LP and convex QP),
scale very poorly.
The failure of those methods to scale efficiently
led to the investigations in \cite{FampaLee2018ORL} and \cite{XuFampaLee},
which in turn motivated our present work. \cite{dokmanic1,dokmanic2,dokmanic}
presents an additional prior approach, based also on LP,
for constructing sparse left and right pseudoinverses.

In what follows, for succinctness, we use vector-norm notation on matrices: we write $\|H\|_1$ to mean $\|\mathrm{vec}(H)\|_1$, and $\|H\|_{\max}$ to mean $\|\mathrm{vec}(H)\|_{\max}$ (in both cases, these are not the usual induced/operator matrix norms). We use $I$ for an identity matrix and $J$ for an all-ones matrix. Matrix dot product is indicated by $\langle X, Y\rangle=\mathrm{trace}(X^\top Y):=\sum_{ij}x_{ij}y_{ij}$.  We use $\sigma_{\min}(A)$ for the minimum singular value of  $A$. We use $A[S,T]$ for the submatrix of $A$ with row indices $S$ and column indices $T$; additionally, we use $A[S,:]$ ( resp., $A[:,T]$) for the submatrix of $A$ formed by the rows $S$ (resp., columns $T$).
Finally, if $A$ is symmetric and $S=T$, we use $A[S]$ to represent the principal submatrix of $A$ with  row/column indices $S$.

\section{Generalized inverse}\label{sec:ginv}

The local-search procedures for the reflexive generalized inverse are based on the block construction procedure proposed in \cite{FampaLee2018ORL}. More specifically they are based on Theorem \ref{rbyrsol}, Definition \ref{def2018}, and Theorem \ref{thm:approx}, presented next.

\begin{theorem} [\cite{FampaLee2018ORL}]\label{rbyrsol}
For $A\in\mathbb{R}^{m\times n}$, let $r:=\rank(A)$.
Let $\tilde{A}$ be \emph{any} $r\times r$ non-singular submatrix of $A$.
Let $H\in\mathbb{R}^{n\times m}$ be such that its submatrix that
corresponds in position to that of $\tilde{A}$ in $A$ is equal to
$\tilde{A}^{-1}$, and other positions in $H$ are zero.
Then $H$ is a reflexive generalized inverse of $A$.
\end{theorem}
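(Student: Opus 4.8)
The plan is to verify directly the two defining properties of a reflexive generalized inverse, namely \ref{property1} and \ref{property2}, by exploiting the block structure of $H$. First I would set up coordinates: let $S\subseteq\{1,\dots,m\}$ and $T\subseteq\{1,\dots,n\}$ with $|S|=|T|=r$ be the row and column index sets picking out $\tilde A = A[S,T]$, so that by hypothesis $H[T,S]=\tilde A^{-1}$ and all other entries of $H$ vanish. The key structural fact I would establish first is that, because $\rank(A)=r$ and $A[S,T]$ is an $r\times r$ nonsingular submatrix, the rows of $A$ indexed by $S$ span the row space of $A$ and the columns indexed by $T$ span the column space of $A$. Concretely, there exist matrices $P\in\mathbb{R}^{m\times r}$ and $Q\in\mathbb{R}^{r\times n}$ with $A = P\,A[S,T]\,Q$, where $P$ has the $r\times r$ identity in rows $S$ and $Q$ has the $r\times r$ identity in columns $T$; equivalently $A[:,T] = P\,A[S,T]$ and $A[S,:] = A[S,T]\,Q$. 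This is the standard rank factorization through a maximal nonsingular submatrix, and I expect it to be the main obstacle — not because it is deep, but because it requires care to argue that $S$ indexes a maximal set of independent rows and $T$ a maximal set of independent columns, and to pin down the bookkeeping relating $P$, $Q$, and the restrictions of $A$.

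Once that factorization $A = P\tilde A Q$ is in hand, the rest is routine computation. Writing $H = Q^\top \tilde A^{-1} P^\top$ (this is exactly the matrix whose $(T,S)$ block is $\tilde A^{-1}$ and which is zero elsewhere, since $P^\top$ selects rows $S$ and $Q^\top$ places entries into columns $T$), I would compute
\begin{align*}
AHA &= (P\tilde A Q)(Q^\top \tilde A^{-1} P^\top)(P\tilde A Q).
\end{align*}
The crucial simplifications are $P^\top P = I_r$ and $QQ^\top = I_r$, which hold because $P$ and $Q$ each contain an $r\times r$ identity block (in rows $S$, resp.\ columns $T$) — these are immediate from the definitions of $P$ and $Q$. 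Substituting, $AHA = P\tilde A (QQ^\top)\tilde A^{-1}(P^\top P)\tilde A Q = P\tilde A \tilde A^{-1}\tilde A Q = P\tilde A Q = A$, which is \ref{property1}. Similarly,
\begin{align*}
HAH &= (Q^\top \tilde A^{-1} P^\top)(P\tilde A Q)(Q^\top \tilde A^{-1} P^\top) = Q^\top \tilde A^{-1}\tilde A\tilde A^{-1} P^\top = Q^\top \tilde A^{-1} P^\top = H,
\end{align*}
which is \ref{property2}. Hence $H$ is a reflexive generalized inverse of $A$, completing the proof.

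An alternative, slightly more elementary route avoids naming $P$ and $Q$ and instead argues entrywise/blockwise: partition (after a permutation of rows and columns, which changes nothing) $A$ into the four blocks $A[S,T], A[S,T^c], A[S^c,T], A[S^c,T^c]$; use nonsingularity of $A[S,T]$ together with $\rank(A)=r$ to deduce the Schur-complement identity $A[S^c,T^c] = A[S^c,T]\,A[S,T]^{-1}\,A[S,T^c]$ (and analogous relations for the off-diagonal blocks via the column/row span statements); then multiply out the block products $AHA$ and $HAH$ directly. This is really the same argument, with the factorization absorbed into explicit block algebra. Either way, the only nontrivial input is the standard linear-algebra fact that a maximal nonsingular submatrix of a rank-$r$ matrix captures the full row and column spaces, and I would state and use that up front.
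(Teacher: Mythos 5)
Your overall strategy --- factor $A$ through the nonsingular block as $A=P\tilde{A}Q$ with $P[S,:]=I_r$ and $Q[:,T]=I_r$, then verify \ref{property1} and \ref{property2} by direct multiplication --- is the standard argument for this result (the paper itself states the theorem without proof, deferring to \cite{FampaLee2018ORL}). But the execution contains a concrete error. The matrices forced by $A=P\tilde{A}Q$ are $P=A[:,T]\tilde{A}^{-1}$ and $Q=\tilde{A}^{-1}A[S,:]$; these have identity blocks in rows $S$ (resp.\ columns $T$) but are \emph{not} zero elsewhere --- for instance $P[S^c,:]=A[S^c,T]\tilde{A}^{-1}$. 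Consequently two claims your computation rests on are false: (i) $Q^\top\tilde{A}^{-1}P^\top$ is \emph{not} the matrix with $\tilde{A}^{-1}$ in block $[T,S]$ and zeros elsewhere (its $[T^c,S]$ block equals $Q[:,T^c]^\top\tilde{A}^{-1}$, generically nonzero), and (ii) $P^\top P=I_r+P[S^c,:]^\top P[S^c,:]\neq I_r$ and likewise $QQ^\top\neq I_r$: containing an identity block does not make the columns orthonormal. You cannot have $P$ and $Q$ be simultaneously the factorization matrices and $0$--$1$ selection matrices, since with genuine selection matrices $E_S\tilde{A}E_T^\top$ is the matrix supported on block $[S,T]$, not $A$. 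So the displayed verification of \ref{property1} and \ref{property2} does not go through as written.

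The fix is small and preserves your plan. Keep $P,Q$ as the factorization matrices and introduce separate selection matrices $E_S\in\mathbb{R}^{m\times r}$ and $E_T\in\mathbb{R}^{n\times r}$ (identity in rows $S$, resp.\ rows $T$, zero elsewhere), so that the theorem's $H$ is $E_T\tilde{A}^{-1}E_S^\top$. The identities you actually need are the cross relations $E_S^\top P=P[S,:]=I_r$ and $QE_T=Q[:,T]=I_r$, which are true. Then
\[
AHA=P\tilde{A}\,(QE_T)\,\tilde{A}^{-1}\,(E_S^\top P)\,\tilde{A}Q=P\tilde{A}Q=A,
\qquad
HAH=E_T\tilde{A}^{-1}(E_S^\top P)\tilde{A}(QE_T)\tilde{A}^{-1}E_S^\top=H.
\]
(Equivalently and even more directly: $AE_T=A[:,T]=P\tilde{A}$ and $E_S^\top A=A[S,:]=\tilde{A}Q$, so $AHA=A[:,T]\,\tilde{A}^{-1}A[S,:]=P\tilde{A}Q=A$.) Your preliminary observation that $S$ indexes a basis of the row space and $T$ a basis of the column space is correct and is exactly what justifies the factorization; the Schur-complement variant you sketch at the end would also work once the same bookkeeping is done carefully.
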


\begin{definition}[\cite{FampaLee2018ORL}]\label{def2018}
For $A\in \mathbb{R}^{m\times n}$, let $r:=\rank(A)$.
For $\sigma$ an ordered subset of $r$ elements from $\{1,\ldots,m\}$ and $\tau$ an ordered subset of $r$ elements from  $\{1,\ldots,n\}$,
let $A[\sigma,\tau]$ be the $r\times r$ submatrix of $A$ with
row indices $\sigma$ and column indices $\tau$.
For fixed $\epsilon \geq 0$,
if $|\det(A[\sigma,\tau])|$ cannot be increased by a factor of more than
$1+\epsilon$  by either swapping
an element of $\sigma$ with one from its complement
or swapping
an element of $\tau$ with one from its complement, then
we say that $A[\sigma,\tau]$ is a \emph{$(1+\epsilon)$-local maximizer
for the absolute determinant} on the set of $r\times r$ non-singular submatrices of
$A$.
\end{definition}

\begin{theorem}[\cite{FampaLee2018ORL}]\label{thm:approx}
For $A\in \mathbb{R}^{m\times n}$, let $r:=\rank(A)$. Choose $\epsilon\geq 0$,
%Let $\tilde{A}$ be an $r\times r$ non-singular submatrix of $A$
%that locally maximizes the absolute determinant on the set of
%$r\times r$ non-singular submatrices of $A$.
and let $\tilde{A}$ be a $(1+\epsilon)$-local maximizer
for the absolute determinant on the set of $r\times r$ non-singular submatrices of
$A$.
Construct $H$ as per Theorem \ref{rbyrsol}.
Then $H$ is a (reflexive) generalized inverse (having at most $r^2$
non-zeros), satisfying
$\| H \|_1 \leq r^2(1+\epsilon)^2 \| H_{opt} \|_1$,
where $H_{opt}$ is a 1-norm minimizing generalized inverse of $A$~.
\end{theorem}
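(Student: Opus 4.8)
The plan is to bound $\|H\|_1$ via two intermediate quantities: the max-norm $\|H\|_{\max}$ and the number of nonzeros. Since $H$ has at most $r^2$ nonzero entries (all confined to the $r\times r$ block corresponding to $\tilde A$), we immediately get $\|H\|_1 \le r^2 \|H\|_{\max}$. It therefore suffices to show $\|H\|_{\max} \le (1+\epsilon)^2 \|H_{opt}\|_1$, and in fact it is enough to show the stronger-looking but more tractable bound $\|H\|_{\max} \le (1+\epsilon)^2 \|H_{opt}\|_{\max}$, because $\|H_{opt}\|_{\max} \le \|H_{opt}\|_1$ trivially.

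Next I would unpack each side. The entries of $H$ in the $\tilde A$-block are the entries of $\tilde A^{-1}$, whose $(j,i)$ entry is $\pm\det(\tilde A \text{ with row } i, \text{col } j \text{ deleted})/\det(\tilde A)$ by the cofactor/adjugate formula. So $\|H\|_{\max}$ equals the largest absolute value of such a ratio, i.e. the largest absolute value of an $(r-1)\times(r-1)$ minor of $\tilde A$ divided by $|\det \tilde A|$. The key step is then to relate this to $\|H_{opt}\|_{\max}$: I would invoke a known structural fact (this is the heart of the argument in \cite{FampaLee2018ORL}, analogous to the analysis of local-search for maximum-volume submatrices) that for \emph{any} generalized inverse $G$ of $A$, and in particular for $H_{opt}$, one has $\|G\|_{\max} \ge 1/|\det(\hat A)|$ where $\hat A$ is a maximum-absolute-determinant $r\times r$ submatrix of $A$ — or more precisely a bound of the form $\|G\|_{\max}\cdot|\det(\text{any }r\times r \text{ submatrix})| \ge $ (something like) the determinant of a maximal submatrix, using that $GA$ or $AG$ acts appropriately on the column/row spaces. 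The cleanest route: for the optimal $H_{opt}$, selecting the rows/columns of $\tilde A$ and using $\tilde A^{-1}$'s entries being governed by minors, combined with the local-maximizer hypothesis which says no single row- or column-swap increases $|\det(\tilde A)|$ by more than a factor $1+\epsilon$.

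The local-maximizer property is used as follows: an arbitrary $(r-1)\times(r-1)$ minor of $\tilde A$ extended by one row and one column of $A$ (outside $\tilde A$) gives an $r\times r$ submatrix whose determinant, by cofactor expansion along that row, is bounded in terms of the minor and entries of $A$; since $\tilde A$ is a $(1+\epsilon)$-local maximizer, that extended determinant is at most $(1+\epsilon)|\det(\tilde A)|$, which after rearranging bounds $|\text{minor}|/|\det\tilde A|$ — hence $\|H\|_{\max}$ — by $(1+\epsilon)$ times a quantity already present in $A$, and a second application of the local-maximizer property (swapping on the other index, or comparing against the globally optimal submatrix whose inverse norm lower-bounds $\|H_{opt}\|_{\max}$) contributes the second factor of $(1+\epsilon)$. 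Chaining the three inequalities $\|H\|_1 \le r^2\|H\|_{\max} \le r^2(1+\epsilon)^2\|H_{opt}\|_{\max} \le r^2(1+\epsilon)^2\|H_{opt}\|_1$ finishes the proof.

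The main obstacle I anticipate is making the middle step — the lower bound on $\|H_{opt}\|_{\max}$ in terms of reciprocal determinants, and the two-sided use of the $(1+\epsilon)$-local-maximizer condition — fully rigorous: one must be careful that a general (possibly dense) generalized inverse $H_{opt}$, which need not have block structure, still satisfies an entrywise lower bound tied to the determinants of submatrices of $A$. This likely requires Cramer's-rule-type identities applied to the relation $AH_{opt}A=A$ restricted to a maximum-volume $r\times r$ submatrix, and careful bookkeeping of signs and index sets; the exponent $(1+\epsilon)^2$ (rather than $(1+\epsilon)$) is exactly the price of needing the local-maximizer property once for rows and once for columns.
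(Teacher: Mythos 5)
Your opening step, $\|H\|_1 \le r^2 \|H\|_{\max}$, and your identification of where the two factors of $(1+\epsilon)$ come from (one from column swaps, one from row swaps, via Cramer's rule) both match the argument of \cite{FampaLee2018ORL} (note that the present paper only cites the theorem; the proof lives in that reference). But the reduction you propose next --- ``it is enough to show $\|H\|_{\max} \le (1+\epsilon)^2\|H_{opt}\|_{\max}$'' --- is not merely stronger-looking, it is false. Take $A = (1 \;\; 1)$, $r=1$, $\epsilon = 0$: the construction gives $H = (1,0)^\top$ with $\|H\|_{\max}=1$, while $H_{opt}=(1/2,1/2)^\top$ is a valid $1$-norm minimizer with $\|H_{opt}\|_{\max}=1/2$. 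So you cannot route the proof through $\|H_{opt}\|_{\max}$; the bound genuinely needs the full sum $\|H_{opt}\|_1$ on the right-hand side, and your subsequent sketch (an entrywise lower bound on $\|H_{opt}\|_{\max}$ via reciprocal determinants) is aimed at the wrong target.

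The missing idea is a sandwich identity. Write $\tilde A = A[\sigma,\tau]$. Cramer's rule (Remarks~\ref{remcol} and~\ref{remrow}) converts the local-maximizer hypothesis into $\|\tilde A^{-1}A[\sigma,:]\|_{\max}\le 1+\epsilon$ and $\|A[:,\tau]\tilde A^{-1}\|_{\max}\le 1+\epsilon$ (columns of $A$ inside $\tau$ and rows inside $\sigma$ contribute only unit vectors). For any generalized inverse $K$ of $A$, restricting $AKA=A$ to rows $\sigma$ and columns $\tau$ gives $A[\sigma,:]\,K\,A[:,\tau]=\tilde A$, hence $\tilde A^{-1}=\bigl(\tilde A^{-1}A[\sigma,:]\bigr)\,K\,\bigl(A[:,\tau]\tilde A^{-1}\bigr)$. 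Bounding each entry of this triple product by the max-norms of the two outer factors times $\sum_{k,l}|K_{kl}|$ yields $\|H\|_{\max}=\|\tilde A^{-1}\|_{\max}\le(1+\epsilon)^2\|K\|_1$; taking $K=H_{opt}$ and combining with $\|H\|_1\le r^2\|H\|_{\max}$ finishes. Your sketch contains all the ingredients (the cofactor formula, the swap bound, the restriction of $AH_{opt}A=A$ to the chosen block) but never assembles them into this triple-product inequality, which is precisely the step that produces $\|H_{opt}\|_1$ on the right rather than a multiple of $\|H_{opt}\|_{\max}$.
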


We note that the $\epsilon$ of Definition \ref{def2018} and Theorem \ref{thm:approx} is used
in \cite{FampaLee2018ORL} to gain polynomial running time in $1/\epsilon$. For the purpose of actual computations,
our observation has been that $\epsilon$ can be chosen to be zero.
We further note that in \cite{XuFampaLee}, we demonstrated that the
bound in Theorem \ref{thm:approx} is the best possible. However, we
will see in our experiments that the bound is overly pessimistic
by a wide margin.

The idea of our algorithms is to select an $r\times r$  non-singular submatrix $\tilde{A}$ of $A$, and construct the reflexive generalized inverse with the inverse of this submatrix, as described in Theorem \ref{rbyrsol}.
The non-zero entries of $H$ will be the non-zero entries of $\tilde{A}^{-1}$. Guided by the result in Theorem \ref{thm:approx}, the `det' searches aim at selecting a submatrix $\tilde{A}$ that is a local maximizer
for the absolute determinant on the set of $r\times r$ non-singular submatrices of
the given matrix $A$. In an attempt to construct matrices $H$ with smaller 1-norm, the `norm' searches more directly try to decrease the 1-norm of the matrix constructed at each iteration.

In the following, we discuss how the  test matrices $A$ used in our computational experiments were generated, how we select the initial submatrix of $A$ to initialize the local searches, and we give details of the algorithms and present numerical results.

To analyze the local-search procedures proposed, we  compare their solutions to the solution of
a natural LP problem, identified below as $P_1$. Its optimal  solution value corresponds to  $\| H_{opt} \|_1$,  where as defined in Theorem \ref{thm:approx}, $H_{opt}$  is a 1-norm minimizing generalized inverse of $A$.

\[
\begin{array}{lll}
(P_1) \  z_{P_1}:=&\min  & \left\langle J , T \right\rangle~,  \\
& \mbox{s.t.:}    &T - H \geq 0~,\\
		     &&T + H \geq 0~,\\
		     &&AHA=A~.
\end{array}
\]

\subsection{Our test matrices}
To test the proposed local-search procedures, we randomly generated 462  matrices with varied dimensions,  ranks, and  densities, with the Matlab function \emph{sprand}.
The function generates a random  $m\times n$ dimensional matrix $A$ with approximate
density $d$ and singular values
given by the non-negative input vector $rc$.
 The number of non-zero singular values in  $rc$
is of course the desired rank $r$. The matrix is generated by sprand using
random plane rotations applied to a diagonal matrix with the given singular values.
For our experiments, we selected the $r$ nonzeros of $rc$
as the decreasing vector $M\times(\rho^{1},\rho^{2},\ldots,\rho^{r})$, where $M=2$, and $\rho=(1/M)^{(2/(r+1))}$.
The shape of this distribution is concave (as is the case for many matrices that one encounters),
and moreover, the entries are not extreme (always between $1/2$ and $2$), and the
product is unity, so we can reasonably hope that the numerics may not be terrible.

We divide our instances into the following three categories:
\begin{itemize}
\item Small: 90 instances. 5 with  each of the 18  combinations of the following parameters:  $m=n=50,80,100$; $r=0.1\times n,0.5\times n$;  $d = 0.25, 0.50, 1.00$.
\item Medium: 360 instances. 30 with  each of the 12 combinations of the following parameters:  $m=n=1000,2000$; $r=0.05\times n,0.1\times n$;  $d = 0.25, 0.50, 1.00$.
\item Large: 12 instances. 3 with  each of the 4 combinations of the following parameters:  $m=5000,10000$; $n=1000$; $r=0.05\times n,0.1\times n$;  $d = 1.00$.
\end{itemize}

The numerical experiments with each category had different purposes. The tests with the `Small' instances have the main purpose of checking how tight are the bounds presented  in \cite{FampaLee2018ORL,XuFampaLee},  for the norms of the constructed matrices $H$. We note that this analysis requires the solution of the LP $P_1$. These are not easy LPs because they
are rather dense.
The tests with the `Medium' instances  have the main purpose of comparing the different local searches and initialization procedures that we have proposed. Finally,  the tests with the `Large' instances  have the main purpose of demonstrating the scalability  of our methodology.

\subsection{Selecting an initial block for the local search}
\label{secinit}

% {\color{red} We now explain our `NSub' algorithm, where NSub stands for non-singular submatrix. It consists of {\color{red} the combination of} the Phase-One and Greedy algorithms described in the following and is  applied to construct an  $r\times r$ non-singular submatrix of $A$, which will be used to initialize the  local search procedures described in the next sections.}

Our algorithm to construct the initial $r\times r$ non-singular submatrix of $A$ for our local searches is called `NSub', where NSub stands for non-singular submatrix. It comprises the Phase-One and Greedy algorithms described below.

In the Phase-One algorithm (see  Algorithm \ref{AlgPhaseOnerow}), we consider an $r\times n$  matrix $\tilde{I}_{\delta}$ with all elements equal to zero except the elements $[i,T(i)]$, for all $i=1,\ldots,r$, where $T$ is  a randomly selected set of $r$ indices from $\{1,\ldots,n\}$. The nonzero elements of $\tilde{I}_{\delta}$ are all equal to $\delta$, a parameter initially set to 1. Then, we define
$
\tilde{A}:=\left[
\begin{array}{c} 
\tilde{I}_{\delta}\\
A
\end{array}\right],
$
\begin{algorithm}[!ht]
	\footnotesize{
		\KwIn{$A\in \mathbb{R}^{m\times n}$, such that rank($A$)$=r$. }
		\KwOut{$S\subset M:=\{1,\ldots,m\}$,  such that   rank($A[S,:]$)$=|S|\leq r$, and $T\subset N:=\{1,\ldots,n\}$, such that $|T|=r$. }
$\delta:=1$; $\tilde{I}_{\delta}:=0_{r\times n}$\;
randomly select
$T\subset\{1,\ldots,n\}$, such that $|T|=r$\;
$S:=\{1,\ldots,r\}$\;
\While {$\delta>10^{-4} \;\; \& \;\; S\cap\{1,\ldots,r\} \neq \emptyset$} 
{
$\tilde{I}_{\delta}[i,T(i)]:=\delta$, for $i=1,\ldots,r$\;
$\tilde{A}:=\left[
\begin{array}{c} 
\tilde{I}_{\delta}\\
A
\end{array}\right]
$\;
$[S]:=$ FI(det)( $\tilde{A}[:,T]^\top$, $\tilde{A}[S,T]^\top$) (FI(det) is presented in Alg. \ref{AlgFI-p13})\;
$\delta:=\delta/10$\;
}
$S:=S\setminus\{1,\ldots,r\}$;  $S(i):=S(i)-r, \ \forall i$ \;

%		\caption{Algorithm to select sets $S$ and $T$, such that $A[S,T]$ is an  $r\times r$ non-singular submatrix of $A$. 		
\caption{Algorithm Phase-One. \label{AlgPhaseOnerow} }
	}
\end{algorithm}
and iteratively apply the local search  `FI(det)' (presented in Algorithm \ref{AlgFI-p13}), to obtain a set $S$ of linearly-independent rows of $\tilde{A}[:,T]$, aiming at increasing the absolute value of the determinant of the submatrix $\tilde{A}[S,T]$. 
The local search is  initialized  at every iteration  with the transpose of an updated $r\times r$ non-singular submatrix $\tilde{A}[S,T]$. At the first iteration, we set $S=\{1,\ldots,r\}$, so  $\tilde{A}[S,T]$ 
 is the identity matrix.
 At each subsequent iteration, $S$ is updated with the solution of the local search, and the indices of $S$ still in $\{1,\ldots,r\}$  are  made less attractive to be in the next solution by decreasing   $\delta$ by a constant factor. The Phase-One algorithm stops when  $\delta$  becomes  $10^{-4}$ or when all the indices in $S$ are greater than $r$.

We execute the Phase-One algorithm up to a maximum number of times.  Each time, a set of $r$ columns $T$ of $\tilde{A}$ is randomly selected, and a set of $r$ rows $S$ is obtained. We then separate the indices from $S$ that correspond to rows of the matrix $A$, i.e., we set $S:=S\setminus\{1,\ldots,r\}$ and $S(i):=S(i)-r, \ \forall i$. We finally check if $|S|=r$; if so,  we stop  executing the algorithm  and output $A[S,T]$ as the $r\times r$ non-singular submatrix of $A$. 

We note that if all sets $T$ randomly selected in the executions of the Phase-One algorithm correspond 
to linearly-dependent columns of $A$, the final set $S$ will certainly contain less than $r$ indices.
In this case, we select from all the sets $S$ obtained in the executions of the Phase-One algorithm, the one with largest cardinality and starting from it,  we successively execute the Greedy algorithm  (see Algorithm \ref{AlgGreedyrow}). 
\begin{algorithm}%[!ht]
	\footnotesize{
		\KwIn{$A\in \mathbb{R}^{m\times n}$, such that rank($A$)$=r$, $\tau >0$, $S\subset M:=\{1,\ldots,m\}$,   such that  rank($A[S,:]$)$=|S|<r$. }
		\KwOut{$S\subset M$,  such that   rank($A[S,:]$)$=|S|\leq r$. }
\While{ $|S|<r$  }
{
Choose the least $i\in M\setminus S$  such that  $\sigma_{\min}(A[S\cup\{i\},:])>\tau$\;
$S:=S\cup\{i\}$\;
}

%		\caption{Algorithm to select sets $S$ and $T$, such that $A[S,T]$ is an  $r\times r$ non-singular submatrix of $A$. 		
\caption{Algorithm Greedy. \label{AlgGreedyrow} }
	}
\end{algorithm}
At each execution, we iteratively add row indices to $S$. Each row selected is the first (with the least index), which keeps the minimum singular value of the partially constructed submatrix $A(S,:)$ greater than a given tolerance $\tau$. The iterations are repeated until no row is obtained or $|S|=r$. If $|S|<r$, $\tau$ is decreased by a constant factor and the algorithm is executed once more. We note that as $A$ has rank $r$, the convergence of this procedure is assured. We start the executions of the Greedy algorithm with $\tau$ slightly smaller than  the minimum singular value of $A(S,:)$ if the initial set $S$ is nonempty, or with $\tau=1$ otherwise.  

Finally, once we obtain the set $S$ with $r$ linearly-independent rows of $A$ with the Greedy algorithm, the last step of the NSub algorithm consists of obtaining $r$ linearly-independent columns of $A$. For that we rerun Algorithm \ref{AlgPhaseOnerow} (and also Algorithm \ref{AlgGreedyrow}, if necessary), but considering $A[S,:]^\top$ as the input matrix. In this case, as the input matrix has only $r$ columns,  the set $T$ in Algorithm \ref{AlgPhaseOnerow} is given by the indices of all columns, instead of being randomly selected. Therefore, the Phase-One algorithm is applied only once.

The NSub algorithm is depicted in Algorithm \ref{AlgNSub}.  Our Matlab implementation of  NSub is now available at \url{https://www.mathworks.com/matlabcentral/fileexchange/83638-linear-independent-rows-and-columns-generator} and can be used to either obtain $r$ linearly-independent rows of a given matrix $A$ with rank not smaller than $r$, or to compute an $r\times r$ non-singular submatrix of $A$. 

\begin{algorithm}%[!ht]
	\footnotesize{
		\KwIn{$A\in \mathbb{R}^{m\times n}$, such that rank($A$)$=r$, and $k_{\max}>0$}. 
		\KwOut{ $S\subset M:=\{1,\ldots,m\}$, $T\subset N:=\{1,\ldots,n\}$,  such that $|S|=|T|=r$, and $A[S,T]$ is non-singular. }
$S^1:=\emptyset$; $k:=1$\;		
\While{ $|S^k|<r \;\; \& \;\; k < k_{\max}$ }
{
$[S^{k+1},T]:=$ Algorithm Phase-One($A$)\;
$k:=k+1$\;
}
$S:=$argmax$_k\{|S^k|\}$\;
\If {$|S|<r$}
{
\If {$|S|>0$}{$\tau := \sigma_{\min}(A[S,:])/10$\;}
\Else{$\tau:=1$\;}

\While{$|S|<r$}
{
$[S]:=$Algorithm Greedy($A$, $S$)\;
$\tau:=\tau/10$\;
}
}
$[\bar{S}, \bar{T}]$:=Algorithm Phase-One($A[S,:]^\top$)\;
$T:=\bar{S}$\;
\If {$|T|>0$}{$\tau := \sigma_{\min}(A[S,T])/10$\;}
\Else{$\tau:=1$}
\While{$|T|<r$}
{
$[T]:=$Algorithm Greedy( $A[S,:]^\top$, $T$)\;
$\tau:=\tau/10$\;
}
\caption{Algorithm NSub. \label{AlgNSub} }
	}
\end{algorithm}

We note that we could  directly apply the Greedy algorithm to compute   the initial non-singular submatrix for the local searches, without calling the Phase-One algorithm. However, in our numerical experiments, we significantly improved the performance of NSub, when calling Phase-One as depicted in Algorithm \ref{AlgNSub}. We also observe  that our best numerical results were obtained by applying the NSub algorithm to $A$ when $m\geq n$, and to $A^\top$, otherwise. In other words, when $m<n$, we initially choose the linearly-independent columns of $A$.  

Given the submatrix computed by NSub, we perform a local search with the goal of reducing the 1-norm of the matrix $H$, by replacing rows and columns of the submatrix, as explained in the next subsection.

\subsection{The local-search procedures}
In Algorithm \ref{AlgLSFI} and \ref{AlgBI}, we present the local search
\begin{algorithm}[!ht]
	\footnotesize{
		\KwIn{ $A\in \mathbb{R}^{m\times n}$, with $r:=$rank($A$), \\$S\subset M:=\{1,\ldots,m\}$, $T\subset  N:=\{1,\ldots,n\}$,  such that $|S|=|T|=r$, and $A[S,T]$ is non-singular. }
		\KwOut{ possibly updated sets $S$, $T$. }		
		%\vspace{0.2cm}
$\tilde{A}:=A[S,T]$\;
$\bar{M}:=M\setminus S$,  $\bar{N}:=N\setminus T$,
$R:=A[\bar{M},T]$, $ C:=A[S,\bar{N}]$\;
$[L,U]:= LU(\tilde{A})$ (Compute the LU factorization of $\tilde{A}$)\;
$cont = true$\;
\While {($cont$)}
{
$cont=false$\;
\For {$\ell =1,\ldots, n-r$}
{
Solve $Ly=C[S,\ell]$, with solution $\hat{y}$\;
Solve $U\alpha=\hat{y}$, with solution $\hat{\alpha}$\;

\If {$|\hat{\alpha}|\nleq (1,\ldots,1)^\top$}
{
$\hat{\jmath}:=\min\{j :  |\hat{\alpha}_j|>1\}$ for `FI(det)', or $\hat{\jmath}:=\mbox{argmax}_j\{|\hat{\alpha}_j|\}$ for `FI$^+$(det)'\;
%}
%
%\If {$\hat{\alpha}^{\ell}_{\max}>1$}
%{
$aux:=\tilde{A}[S,\hat{\jmath}]$\;
$\tilde{A}[S,\hat{\jmath}] := C[S,\ell]$\;
$C[S,\ell]:=aux$\;
$T:=T\cup\{\bar{N}(\ell)\}\setminus \{T(\hat{\jmath})\}$\;
$\bar{N}:=\bar{N}\setminus\{\bar{N}(\ell)\}\cup \{T(\hat{\jmath})\}$\;
$[L,U]:= LU(\tilde{A})$ (update LU factorization of previous iteration)\;
$cont=true$\;
}
}
\For {$\ell =1,\ldots, m-r$}
{
Solve $U^\top y=R[\ell,T]^\top$, with solution $\hat{y}$\;
Solve $L^\top\alpha=\hat{y}$, with solution $\hat{\alpha}$\;
\If {$|\hat{\alpha}|\nleq (1,\ldots,1)^\top$}
{
$\hat{\jmath}:=\min\{j :  |\hat{\alpha}_j|>1\}$ for `FI(det)', or $\hat{\jmath}:=\mbox{argmax}_j\{|\hat{\alpha}_j|\}$ for `FI$^+$(det)' \;
%$[\hat{\alpha}^{\ell}_{\max},\hat{\jmath}]:=\max_{i}\{|\hat{\alpha}_i|\}$\;
%\If {$\hat{\alpha}^{\ell}_{\max}>1$}
%{
$aux:=\tilde{A}[\hat{\jmath},T]$\;
$\tilde{A}[\hat{\jmath},T] := R[\ell,T]$\;
$R[\ell,T]:=aux$\;
$S:=S\cup\{\bar{M}(\ell)\}\setminus \{S(\hat{\jmath})\}$\;
$\bar{M}:=\bar{M}\setminus\{\bar{M}(\ell)\}\cup \{S(\hat{\jmath})\}$\;
$[L,U]:= LU(\tilde{A})$ (update LU factorization of previous iteration)\;
$cont=true$\;
}
}
}
\caption{`FI(det)' (`FI$^+$(det)')  for generalized inverses. \label{AlgLSFI} }
}
\end{algorithm}
procedures that consider as the criterion for improvement of the given solution, the increase in the absolute determinant of the  $r\times r$ non-singular submatrix of $A$.

Based on Theorem \ref{thm:approx}, for a given rank-$r$ matrix $A$,  the procedure starts from a set $S$ of $r$ rows and a set $T$ of $r$ columns of $A$, such that $A[S,T]$ is non-singular.

In the first loop of Algorithm \ref{AlgLSFI} (lines 7--18), a column of $A[S,N\setminus T]$ replaces a column of $A[S,T]$ if the absolute determinant increases with the replacement. To evaluate how much the determinant changes when each column of $A[S,T]$ is replaced by a given column $\gamma$  of $A[S,N\setminus T]$, we use the result in Remark \ref{remcol} (i.e.,
using Cramer's Rule).

\begin{jremark}\label{remcol}
Let $\gamma\in \mathbb{R}^n$ and $A\in \mathbb{R}^{n\times n}$ with $\det(A)\neq 0$. Let $A_{\gamma/j}$ be the matrix obtained by replacing the $j^{th}$ column of
$A$ by $\gamma$.  If $\hat{\alpha}\in \mathbb{R}^n$ solves the linear system of equations $A\alpha=\gamma$, then we have $\det(A_{\gamma/j})= \hat{\alpha}_j \times \det(A)$.\\
\end{jremark}

Similarly, in the second  loop of the algorithm (lines 19--30), a row of $A[M\setminus S, T]$ replaces a row of $A[S,T]$ if the absolute  determinant increases with the replacement. In this case, to evaluate how much the determinant changes when each row of $A[S,T]$ is replaced by a given row $\gamma$  of $A[M\setminus S, T]$, we use the equivalent result in Remark \ref{remrow}.

\begin{jremark}\label{remrow}
Let $\gamma\in \mathbb{R}^{1\times n}$ and $A\in \mathbb{R}^{n\times n}$ with $\det(A)\neq 0$. Let $A_{\gamma/i}$ be the matrix obtained by replacing the $i^{th}$ row of
$A$ by $\gamma$.  If $\hat{\alpha}\in \mathbb{R}^{1 \times n}$ solves the linear system of equations $\alpha A=\gamma$, then we have $\det(A_{\gamma/i})= \hat{\alpha}_i \times \det(A)$.\\
\end{jremark}

\noindent Use of Cramer's rule greatly improves the performance of these local searches.
\smallskip

Two algorithms,   `FI(det)' and `FI$^+$(det)', are presented in Algorithm  \ref{AlgLSFI}. The only differences between them are shown in lines 11 and 23.
For `FI$^+$(det)' (``first improvement plus''), we iteratively select a column (row) that is not in  $A[S,T]$ and exchange it with the column (row) of  $A[S,T]$ that leads to the greatest increase in the absolute determinant of the submatrix.  For `FI(det)' (``first improvement''),  the column (row) of $A[S,T]$ selected for the replacement is the one   of least index, that leads to an increase in the absolute  determinant.
\begin{algorithm}[!ht]
	\footnotesize{
		\KwIn{ $A\in \mathbb{R}^{m\times n}$, with $r:=$rank($A$), \\$S\subset M:=\{1,\ldots,m\}$, $T\subset  N:=\{1,\ldots,n\}$,  such that $|S|=|T|=r$, and $A[S,T]$ is non-singular. }
		\KwOut{ possibly updated sets $S$, $T$. }		
$\tilde{A}:=A[S,T]$\;
$\bar{M}:=M\setminus S$,  $\bar{N}:=N\setminus T$,
$R:=A[\bar{M},T]$, $ C:=A[S,\bar{N}]$\;
$[L,U]:= LU(\tilde{A})$ (Compute the LU factorization of $\tilde{A}$)\;
$biggest.\alpha_r=biggest.\alpha_c:=1$\;
$cont = true$\;
\While {($cont$)}
{
$cont=false$\;
\For {$\ell =1,\ldots, n-r$}
{
Solve $Ly=C[S,\ell]$, with solution $\hat{y}$\;
Solve $U\alpha=\hat{y}$, with solution $\hat{\alpha}$\;
$\hat{\alpha}_{\max} := \max_j\{|\hat{\alpha}_j|\}$\;
\If {$ \hat{\alpha}_{\max} >  biggest.\alpha_r$}
{
$biggest.\alpha_r:=\hat{\alpha}_{\max}$\;
$\hat{\jmath}_r:=\mbox{argmax}_j\{|\hat{\alpha}_j|\}$\;
$\hat{\ell}_r:=\ell$\;
}
}
\For {$\ell =1,\ldots, m-r$}
{
Solve $U^\top y=R[\ell,T]^\top$, with solution $\hat{y}$\;
Solve $L^\top\alpha=\hat{y}$, with solution $\hat{\alpha}$\;
$\hat{\alpha}_{\max} := \max_j\{|\hat{\alpha}_j|\}$\;
\If {$ \hat{\alpha}_{\max} >  biggest.\alpha_c$}
{
$biggest.\alpha_c:=\hat{\alpha}_{\max}$\;
$\hat{\jmath}_c:=\mbox{argmax}_j\{|\hat{\alpha}_j|\}$\;
$\hat{\ell}_c:=\ell$\;
}
}
\If {$\max\{biggest.\alpha_r,biggest.\alpha_c\} > 1$}
{
$cont=true$\;
\If {$biggest.\alpha_r > biggest.\alpha_c$}
{
$aux:=\tilde{A}[S,\hat{\jmath}_r]$\;
$\tilde{A}[S,\hat{\jmath}_r] := C[S,\hat{\ell}_r]$\;
$C[S,\hat{\ell}_r]:=aux$\;
$T:=T\cup\{\bar{N}(\hat{\ell}_r)\}\setminus \{T(\hat{\jmath}_r)\}$\;
$\bar{N}:=\bar{N}\setminus\{\bar{N}(\hat{\ell}_c)\}\cup \{T(\hat{\jmath}_r)\}$\;
$[L,U]:= LU(\tilde{A})$ (update LU factorization of previous iteration)\;
}
\Else
 {
$aux:=\tilde{A}[\hat{\jmath}_c,T]$\;
$\tilde{A}[\hat{\jmath}_c,T] := R[\hat{\ell}_c,T]$\;
$R[\hat{\ell}_c,T]:=aux$\;
$S:=S\cup\{\bar{M}(\ell)\}\setminus \{S(\hat{\jmath})\}$\;
$\bar{M}:=\bar{M}\setminus\{\bar{M}(\hat{\ell}_c)\}\cup \{S(\hat{\jmath}_c)\}$\;
$[L,U]:= LU(\tilde{A})$ (update LU factorization of previous iteration)\;
 }
 }
}
\caption{`BI(det) for generalized inverses'. \label{AlgBI} }
}
\end{algorithm}

We also present in Algorithm \ref{AlgBI}, the algorithm `BI(det)' (``best improvement''), where the pair of rows or columns exchanged at each iteration is selected as the pair that leads to the greatest increase in the absolute determinant, among all possibilities.

Algorithms `FI(det)', `FI$^+$(det)', and `BI(det)' stop when no replacement of a row or column of $A[S,T]$ would lead to an increase in the absolute determinant, i.e., when we reach a  local   maximizer for the absolute determinant, according to Definition \ref{def2018}.
\begin{algorithm}[!ht]
	\footnotesize{
		\KwIn{ $A\in \mathbb{R}^{m\times n}$, with $r:=$rank($A$), \\$S\subset M:=\{1,\ldots,m\}$, $T\subset  N:=\{1,\ldots,n\}$,  such that $|S|=|T|=r$, and $A[S,T]$ is non-singular. }
		\KwOut{ possibly updated sets $S$, $T$. }		
		%\vspace{0.2cm}
$\tilde{A}:=A[S,T]$\;
$\bar{M}:=M\setminus S$,  $\bar{N}:=N\setminus T$,
$R:=A[\bar{M},T]$, $ C:=A[S,\bar{N}]$\;
%$[L,U]:= LU(\tilde{A})$ (Compute the LU factorization of $\tilde{A}$)\;
$cont = true$\;
\While {($cont$)}
{
$cont=false$\;
$B=A[S,T]$\;
$Binv:=B^{-1}$\;
$nBinv=\|Binv\|_1$\;
\For {$\ell =1,\ldots, n-r$}
{
$\gamma:=A[S,\bar{N}(\ell)]$\;
\For {$j =1,\ldots, r$}
{
Let $B_{\gamma/j}$ be the matrix obtained by replacing the $j^{th}$ column of
$B$ by $\gamma$\;
$nBinv^+:=\|B_{\gamma/j}^{-1}\|_1$ (Computed with the result in Remark \ref{remnormcol})\;
\If {$nBinv^+ < nBinv$}
{
$B := B_{\gamma/j}$\;
$Binv := B_{\gamma/j}^{-1}$ (Computed with the result in Remark \ref{remnormcol})\;
$nBinv=nBinv^+$\;
$T:=T\cup\{\bar{N}(\ell)\}\setminus \{T(j)\}$\;
$\bar{N}:=N\setminus T$\;
$cont=true$\;
break \;
}
}
}
$B=A[S,T]^\top$\;
$Binv:=B^{-1}$\;
$nBinv=\|Binv\|_1$\;
\For {$\ell =1,\ldots, m-r$}
{
$\gamma:=A[\bar{M}(\ell),T]^\top$\;
\For {$j =1,\ldots, r$}
{
Let $B_{\gamma/j}$ be the matrix obtained by replacing the $j^{th}$ column of
$B$ by $\gamma$\;
$nBinv^+:=\|B_{\gamma/j}^{-1}\|_1$ (Computed with the result in Remark \ref{remnormcol})\;
\If {$nBinv^+ < nBinv$}
{
$B := B_{\gamma/j}$\;
$Binv := B_{\gamma/j}^{-1}$ (Computed with the result in Remark \ref{remnormcol})\;
$nBinv=nBinv^+$\;
$S:=S\cup\{\bar{M}(\ell)\}\setminus \{S(j)\}$\;
$\bar{M}:=M\setminus S$\;
$cont=true$\;
break\;
}
}
}
}
\caption{`FI(norm)' for generalized inverses. \label{AlgnormLSFI} }
}
\end{algorithm}

Algorithm \ref{AlgnormLSFI} represents the local search  `FI(norm)'.  In this case, we  consider as the criterion for improvement of the given solution, the decrease in the 1-norm of $H$, or equivalently, the decrease in the 1-norm of the inverse of the  $r\times r$ non-singular submatrix of $A$ being considered.
To evaluate how much the 1-norm of the inverse of the submatrix  changes when each column (row) of $A[S,T]$ is replaced by a given column (row) $\gamma$  of $A[S,N\setminus T]$ ($A[M\setminus S, T]$), we use the result in Remark \ref{remnormcol}.

\vbox{
\begin{jremark}\label{remnormcol}%(see, for example, \cite[Sec. 3.4]{MF2006})
Let $\gamma\in \mathbb{R}^n$ and $A:=(a_1,  \ldots, a_j\mathbin{,}\ldots, a_n) \in \mathbb{R}^{n\times n}$ with $\det(A)\neq 0$. Let $A_{\gamma/j}$ be the matrix obtained by replacing the $j^{th}$ column of
$A$ by $\gamma$, and
 $v=(v_1, \ldots, v_j, \ldots, v_n)^\top := A^{-1}\gamma$. If $v_j\neq 0$, define
 \[
\bar{v}:=\left(-\frac{v_1}{v_j}, \ldots, -\frac{v_{j-1}}{v_j},\  \frac{1}{v_j},\  -\frac{v_{j+1}}{v_j}, \ldots, -\frac{v_n}{v_j}\right)^\top.
\]
Then
\[
A_{\gamma/j}^{-1} = \Theta \ A^{-1},
\]
where
\[
\Theta = (e_1, \ldots, e_{j-1},\  \bar{v},\ e_{j+1},\ldots, e_n),
\]
and $e_i$ are the standard unit vectors.
\end{jremark}
}

\noindent Use of Remark \ref{remnormcol} greatly improves the performance of these local searches.
\smallskip

%The algorithm for  `BI(norm)' differs from Algorithm \ref{AlgnormLSFI}, only on what concerns the choice of the column (row) to be replaced in the current submatrix of $A$. For the `BI'  local search, instead of considering the first decrease in the 1-norm of $H$,   obtained in the loop described in lines 12-22 (28-38), the algorithm computes the modification in the 1-norm obtained for each index $j$, and selects the column (row) that leads to the greatest decrease.

\begin{jremark}
In Algorithms \ref{AlgLSFI} and \ref{AlgBI} (and later in Algorithms \ref{AlgFI-p13} and \ref{AlgBI-p13}), we need to
update LU factorizations of an $r\times r$ matrix $B$ under low-rank changes. Practical and numerically-stable algorithms for 
LU factorizations employ ``partial or complete pivoting'', and Matlab provides this functionality, 
calculating such factorizations in $\mathcal{O}(r^3)$ floating-point operations (in the dense case).
But Matlab does not have functionality for efficiently updating these
factorizations, while in theory they can be updated in  $\mathcal{O}(r^2)$ floating-point operations (in the dense case); see 
for example, \cite{Sahinidis} or \cite{Gondzio}). In principle, we do advocate a proper updating approach, but we computed
our new LU factorizations (with partial pivoting) from scratch each time for two reasons: (i) the updating procedures are
not available in Matlab, and (ii) our algorithms turn out to be very fast even without performing fast LU updates.
\end{jremark}

%%%%%%%%%%%%%

\subsection{Numerical results} We initially consider the experiments done with the 90 instances in the `Small' category, which had the main purpose of analyzing the ratios between 
the 1-norm of the matrices $H$ computed by the three local searches based on the determinant, with the minimum 1-norm of a generalized inverse given by the solution of the LP problem $P_1$  $(\|H\|_1/z_{P_1})$. We aim at checking how close these ratios are from the upper bound given by Theorem \ref{thm:approx}.

In   Figure \ref{barratplp1dens},  we present the average ratios   for the matrices with the same dimension, rank, and density.  From Theorem \ref{thm:approx}, we know that these ratios cannot be greater than $r^2$, and we see from the results, that for the matrices considered in our tests, we stay quite far from this upper bound (even though the upper bound is the best possible). In general, the ratios increase with the rank $r$, the dimension $m=n$, and the density $d$  of the matrices,  but even for $r=50$, we  obtain ratios less than 2. So, our conclusion is that the
worst-case bound, while best possible, is extremely pessimistic.

\begin{figure}[!ht]
\centering
\includegraphics[scale=0.5]{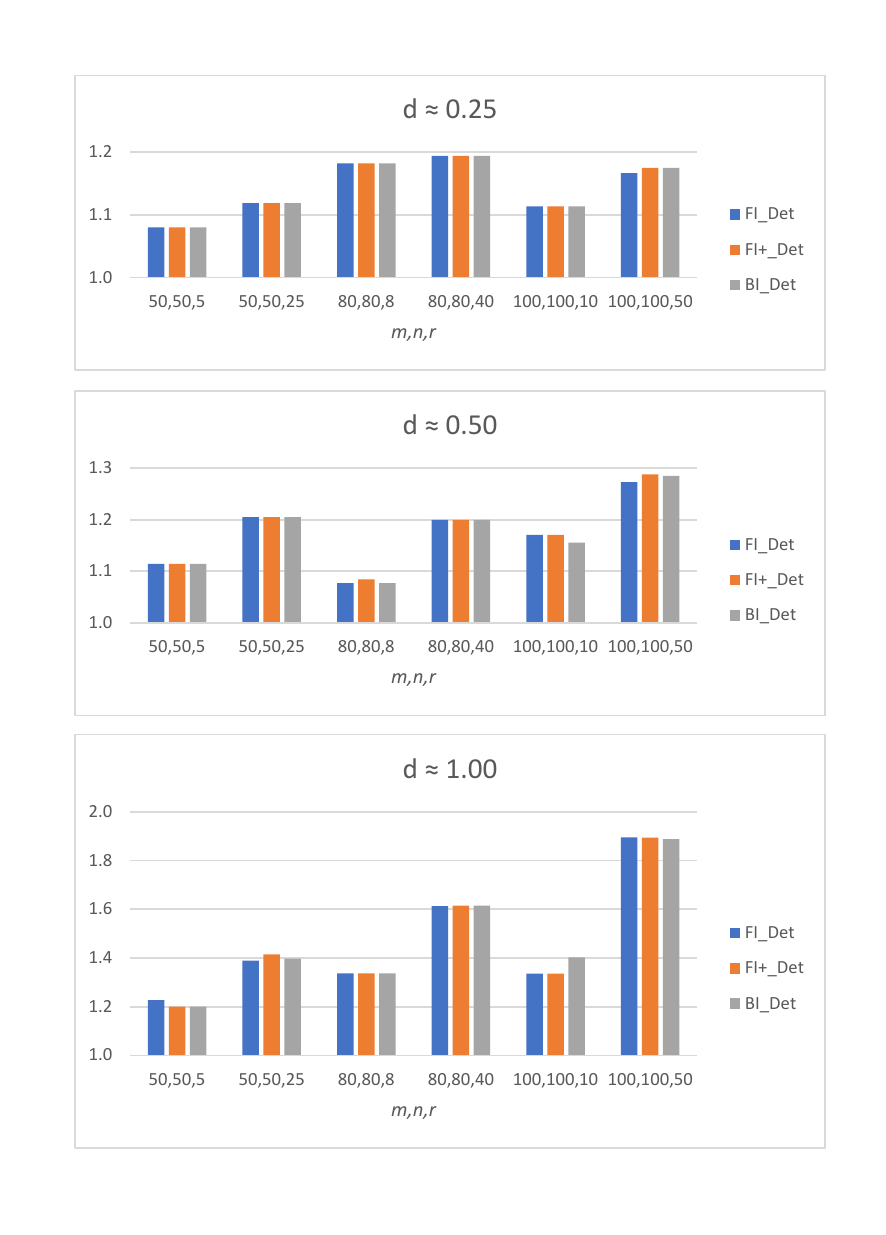}%{}
%\vskip-30pt
\caption{ $\|H\|_1/z_{P_1}$ (Small) (generalized inverse)\label{barratplp1dens}}
\end{figure}

In Table \ref{tab:1}, besides presenting the average ratios depicted in Figure \ref{barratplp1dens}, we also  present the average running time to compute the generalized inverses. In case of the local searches, the total time to compute the generalized inverse is given by the sum of the time to generate the initial matrix $H$ by the 
%{\color{red} Greedy Light (GL) algorithm (Algorithm \ref{AlgGreedyLight})} 
NSub algorithm (Algorithm \ref{AlgNSub}), and the time of the local search (FI(det), FI$^+$(det), or BI(det)).

\begin{table}[!ht]
	\centering
	\tiny
\begin{tabular}{r|lll|rrlll}
		\hline
		&\multicolumn{3}{|c|}{ $\|H\|_1/z_{P_1}$}&\multicolumn{5}{|c}{Time(sec)}\\
		\multicolumn{1}{c|}{$m,r,d$}& \multicolumn{1}{|c}{FI(det)}   & \multicolumn{1}{c}{FI$^+$(det)} & \multicolumn{1}{c|}{BI(det)} & \multicolumn{1}{|c}{$P_1$} &   \multicolumn{1}{c}{NSub} &\multicolumn{1}{c}{FI(det)}   & \multicolumn{1}{c}{FI$^+$(det)} & \multicolumn{1}{c}{BI(det)} \\
		\hline
		50,05,0.25  & 1.080 & 1.080 & 1.080 & 0.495   & 0.091 & 0.004 & 0.003 & 0.005 \\
		50,25,0.25  & 1.119 & 1.119 & 1.119 & 4.711   & 0.094 & 0.003 & 0.002 & 0.004 \\
		80,08,0.25  & 1.182 & 1.182 & 1.182 & 1.734   & 0.077 & 0.003 & 0.003 & 0.006 \\
		80,40,0.25  & 1.195 & 1.195 & 1.195 & 27.284  & 0.149 & 0.009 & 0.006 & 0.020 \\
		100,10,0.25 & 1.114 & 1.114 & 1.114 & 3.297   & 0.090 & 0.004 & 0.003 & 0.011 \\
		100,50,0.25 & 1.167 & 1.175 & 1.175 & 65.156  & 0.266 & 0.005 & 0.004 & 0.016 \\
		50,05,0.50  & 1.115 & 1.115 & 1.115 & 0.489   & 0.065 & 0.004 & 0.004 & 0.005 \\
		50,25,0.50  & 1.205 & 1.205 & 1.205 & 4.995   & 0.084 & 0.004 & 0.003 & 0.007 \\
		80,08,0.50  & 1.077 & 1.085 & 1.077 & 1.705   & 0.068 & 0.005 & 0.004 & 0.008 \\
		80,40,0.50  & 1.200 & 1.200 & 1.200 & 29.604  & 0.131 & 0.006 & 0.004 & 0.013 \\
		100,10,0.50 & 1.171 & 1.171 & 1.155 & 3.806   & 0.094 & 0.005 & 0.004 & 0.010 \\
		100,50,0.50 & 1.273 & 1.288 & 1.285 & 69.810  & 0.288 & 0.007 & 0.004 & 0.021 \\
		50,05,1.00  & 1.228 & 1.201 & 1.201 & 0.687   & 0.003 & 0.007 & 0.006 & 0.010 \\
		50,25,1.00  & 1.390 & 1.416 & 1.399 & 6.691   & 0.036 & 0.008 & 0.005 & 0.014 \\
		80,08,1.00  & 1.337 & 1.337 & 1.337 & 2.391   & 0.018 & 0.005 & 0.004 & 0.011 \\
		80,40,1.00  & 1.614 & 1.615 & 1.615 & 43.598  & 0.088 & 0.010 & 0.006 & 0.029 \\
		100,10,1.00 & 1.337 & 1.337 & 1.403 & 4.792   & 0.020 & 0.006 & 0.006 & 0.018 \\
		100,50,1.00 & 1.896 & 1.895 & 1.889 & 124.856 & 0.139 & 0.030 & 0.017 & 0.089 \\
		\hline
	\end{tabular}
	\caption{Local Searches for generalized inverse vs. $P_1$}\label{tab:1}
\end{table}

We see from Figure \ref{barratplp1dens} and  Table \ref{tab:1}, that  the three local searches converge to solutions of similar quality on most of the experiments. %{\color{red} In cases where  the ratios have small differences,  there is no clear winner between the  three local searches.}

We observe in Table \ref{tab:1} that the running times to solve the LP $P_1$ increase quickly with the dimension of the matrix, and are much higher than the times for the local searches. Therefore, we can already see that the LP $P_1$ is not useful as a computational alternative to
our local searches when we consider larger instances (and additionally, as we have mentioned,
the solutions produced by the LP do not have the reflexive property, nor are they
nicely block structured).

Next, we consider the experiments done with the 360 instances in the `Medium' category, which had the main purpose of  comparing the different local searches  proposed.  %{\color{red}Greedy Light algorithm} 
 We  present in Table \ref{tab:2} average results for each group of 30 instances with the same configuration, described in the first column. In the next three columns we present statistics for the local searches based on the determinant, which are initialized  with the solutions given by the NSub  algorithm, and in the last three columns we consider the application of the local searches based on the 1-norm of  $H$, which are initialized with  the solutions given by the three first local searches. In the first half of the table,  we show the mean and standard deviation of the %{\color{red} improvements in} 
 relative difference between  the 1-norm of the matrix $H$ obtained by each local search and the minimum value  among all of them, denoted by $||H_{best}||_1$. $H_{best}$ is naturally obtained with the application of one of the local searches based on the 1-norm. 

	\begin{table}[!ht]
		\centering
		\tiny
	\begin{tabular}{l|c|c|c|c|c|c}
			\hline
			\multicolumn{1}{c|}{$m,r,d$}& \multicolumn{1}{|c|}{FI(det)}   & \multicolumn{1}{|c|}{FI$^+$(det)} & \multicolumn{1}{|c|}{BI(det)} & \multicolumn{1}{|c|}{FI(det)}   & \multicolumn{1}{c}{FI$^+$(det)} & \multicolumn{1}{|c}{BI(det)} \\
			&   & & & \multicolumn{1}{|c|}{FI(norm)}   & \multicolumn{1}{c}{FI(norm)} & \multicolumn{1}{|c}{FI(norm)} \\
			\hline
			&\multicolumn{6}{|c}{ $(\|H\|_1 - \|H_{best}\|_1)/\|H_{best}\|_1$ }\vphantom{$\Sigma^{R}$} \\[3pt]
			\hline
			1000,050,0.25 & 0.073/0.034 & 0.072/0.034 & 0.071/0.036 & 0.000/0.000 & 0.000/0.000 & 0.000/0.000 \\
			1000,050,0.50 & 0.098/0.031 & 0.100/0.030 & 0.097/0.030 & 0.000/0.001 & 0.000/0.001 & 0.001/0.002 \\
			1000,050,1.00 & 0.086/0.026 & 0.086/0.026 & 0.086/0.026 & 0.001/0.004 & 0.001/0.004 & 0.000/0.002 \\
			1000,100,0.25 & 0.134/0.024 & 0.132/0.025 & 0.133/0.026 & 0.000/0.001 & 0.000/0.001 & 0.001/0.002 \\
			1000,100,0.50 & 0.089/0.047 & 0.088/0.047 & 0.088/0.048 & 0.000/0.003 & 0.000/0.000 & 0.000/0.000 \\
			1000,100,1.00 & 0.132/0.042 & 0.131/0.046 & 0.131/0.047 & 0.002/0.004 & 0.002/0.004 & 0.001/0.003 \\
			2000,100,0.25 & 0.116/0.035 & 0.115/0.035 & 0.114/0.036 & 0.000/0.002 & 0.000/0.001 & 0.000/0.000 \\
			2000,100,0.50 & 0.171/0.036 & 0.170/0.035 & 0.171/0.034 & 0.001/0.002 & 0.001/0.003 & 0.001/0.003 \\
			2000,100,1.00 & 0.128/0.054 & 0.130/0.057 & 0.129/0.054 & 0.002/0.006 & 0.002/0.006 & 0.002/0.005 \\
			2000,200,0.25 & 0.202/0.050 & 0.210/0.065 & 0.213/0.060 & 0.005/0.010 & 0.004/0.005 & 0.007/0.011 \\
			2000,200,0.50 & 0.160/0.046 & 0.161/0.045 & 0.161/0.044 & 0.001/0.003 & 0.001/0.004 & 0.001/0.003 \\
			2000,200,1.00 & 0.217/0.046 & 0.219/0.047 & 0.220/0.048 & 0.003/0.007 & 0.002/0.005 & 0.002/0.005\\
			\hline
			&\multicolumn{6}{|c}{ Time(sec) }\\
			\hline
			1000,050,0.25 & 0.116/0.035 & 0.109/0.032 & 0.469/0.116   & 1.888/0.426    & 1.899/0.439    & 1.843/0.381    \\
			1000,050,0.50 & 0.267/0.041 & 0.241/0.050 & 1.111/0.337   & 13.806/2.242   & 14.163/2.440   & 14.097/2.442   \\
			1000,050,1.00 & 0.372/0.061 & 0.348/0.091 & 1.755/0.268   & 29.225/5.751   & 28.888/5.567   & 28.990/5.817   \\
			1000,100,0.25 & 1.320/0.293 & 0.975/0.273 & 8.871/3.565   & 317.976/59.536 & 314.073/60.679 & 319.427/52.886 \\
			1000,100,0.50 & 0.104/0.032 & 0.110/0.030 & 0.369/0.133   & 1.995/0.322    & 1.983/0.336    & 1.994/0.330    \\
			1000,100,1.00 & 0.246/0.041 & 0.236/0.064 & 1.112/0.529   & 14.737/2.585   & 14.865/2.729   & 14.697/2.646   \\
			2000,100,0.25 & 0.337/0.054 & 0.318/0.093 & 1.459/0.390   & 30.938/5.014   & 30.503/5.153   & 30.220/5.091   \\
			2000,100,0.50 & 1.301/0.258 & 1.070/0.206 & 8.537/5.039   & 312.795/60.716 & 323.046/67.272 & 320.428/67.722 \\
			2000,100,1.00 & 0.113/0.043 & 0.132/0.042 & 0.771/0.536   & 2.303/0.562    & 2.340/0.620    & 2.281/0.608    \\
			2000,200,0.25 & 0.279/0.083 & 0.258/0.063 & 2.215/1.311   & 20.197/4.851   & 21.013/4.596   & 20.791/4.553   \\
			2000,200,0.50 & 0.290/0.105 & 0.319/0.088 & 1.945/2.843   & 38.609/7.940   & 38.806/8.670   & 38.700/8.634   \\
			2000,200,1.00 & 1.153/0.454 & 0.912/0.320 & 10.513/13.156 & 395.193/62.373 & 416.49/88.601  & 398.360/67.740 \\
			\hline
		\end{tabular}
		\caption{Local Searches for generalized inverse (Mean/Std Dev) (Medium) }\label{tab:2}
	\end{table}	

Comparing to the tests with the `Small' instances,  we see that on this larger group of instances of higher dimension,  
%the similarity  among the quality of 
the solutions obtained by the three local searches based on the determinant are still of similar quality. Consequently, the solutions obtained by the  searches based on the 1-norm are also of similar quality. By applying these 1-norm searches, we are able to improve the solutions from determinant searches by approximately 7 to 22\%.
% {\color{blue}Comment: The definition of improvement here is vague because the decrease in 1-norm is good. Maybe we could rephrase this as relative difference that is used in the later observations.}.
% {\color{red}The fact that this improvement is not very significant,  confirms that the determinant searches are an effective way of constructing a good generalized inverse. }
% %{\color{red}even more clear} 
% %{\color{red} percentage improvements in the 1-norm of $H$ that approximately goes from  80 to 99\%}, 
% {\color{red} We also can see the standard deviation is always less than 1\% of the mean value, and decreases as the rank increases, showing the robustness of the search procedures, which increases as they have more space for movements.}
Furthermore, we see that this  improvement  comes with  a  high computational cost.  The necessity of computing the inverse of the $r\times r$ submatrix of $A$ at each iteration, significantly increases the time of these searches. Even though we use the result in Remark \ref{remnormcol} to accelerate this computation, it  still makes the norm searches slower than the determinant  searches. We finally note that the 1-norm searches are able to improve more the solutions from the determinant searches as the rank and the dimension of the matrix increase. 

In Table \ref{tab:3}, we present the number of swaps for each local search.  Combining these results with the running time of the procedures, 
%we see that the BI(det) search  perform  fewer swaps than the other determinant searches, but at a higher computational cost.
%The relatively small number of swaps of the norm searches, and the small decrease in the norm compared to the determinant searches confirm that the determinant searches are an effective way of constructing a good generalized inverse.}
%Comparing the three first procedures
we conclude that, despite the fact that the best improvement is commonly pointed as a good criterion for local searches in the literature, in our case,  BI(det)  could be discarded. Comparing it to the other determinant searches, we see that, although it converges to solutions of similar quality  performing fewer swaps, it is much more time consuming. Comparing the two other searches based on the determinant, FI$^+$(det) preforms slightly better, with a smaller number of swaps and the average computational time a bit smaller. We can also observe the high cost of the swaps performed by the searches based on the 1-norm. 
 These observations are pointed out in Figure  \ref{lscomp},  where we show the relation between the average relative 1-norm difference  to the minimum norm obtained by all searches  and the average running time of the local searches
 for the larger instances in the `Medium'  category. The hollow circle indicates that the local search had worse average result and longer average running time than another procedure, and therefore, should not be adopted. We note that we use a logarithmic scale for the running time. Once again we see some  improvement given by the norm searches, but with a very high computational cost.
% \begin{table}[!ht]
% \centering
% \tiny
% {\color{red}\begin{tabular}{r|rrrrrr}
% \hline
%  \multicolumn{1}{c|}{$m,r,d$}& \multicolumn{1}{|c}{FI(det)}   & \multicolumn{1}{c}{FI$^+$(det)} & \multicolumn{1}{c}{BI(det)} & \multicolumn{1}{c}{FI(det)}   & \multicolumn{1}{c}{FI$^+$(det)} & \multicolumn{1}{c}{BI(det)} \\
%  &   & &  & \multicolumn{1}{c}{FI(norm)}   & \multicolumn{1}{c}{FI(norm)} & \multicolumn{1}{c}{FI(norm)} \\
% \hline
% 1000,050,0.25  & 99.967  & 94.267  & 51.333  & 14.667  & 14.600  & 14.300  \\
% 1000,100,0.25 & 187.933 & 167.967 & 96.767  & 32.767  & 32.300  & 31.433  \\
% 2000,100,0.25 & 214.100 & 199.800 & 104.600 & 33.033  & 32.500  & 32.133  \\
% 2000,200,0.25 & 404.100 & 353.033 & 198.800 & 75.533  & 76.033  & 76.067  \\
% 1000,050,0.50  & 113.067 & 104.100 & 53.433  & 17.667  & 17.233  & 17.300  \\
% 1000,100,0.50 & 214.833 & 187.033 & 101.633 & 38.067  & 37.100  & 38.033  \\
% 2000,100,0.50 & 238.733 & 218.233 & 109.533 & 40.333  & 40.333  & 40.333  \\
% 2000,200,0.50 & 462.633 & 395.200 & 205.500 & 89.833  & 88.600  & 88.233  \\
% 1000,050,1.00  & 164.767 & 142.400 & 60.167  & 24.933  & 23.967  & 24.067  \\
% 1000,100,1.00 & 330.100 & 260.433 & 116.833 & 60.833  & 58.733  & 60.767  \\
% 2000,100,1.00 & 336.700 & 291.667 & 120.933 & 59.033  & 59.067  & 57.167  \\
% 2000,200,1.00 & 701.200 & 538.067 & 236.500 & 118.567 & 116.433 & 114.533\\
% \hline
% \end{tabular}
% \caption{Number of swaps (Medium) (generalized inverse) }}%\label{tab:3}}
% \end{table}

\begin{table}[!ht]
	\centering
	\tiny
\begin{tabular}{r|rrrrrr}
		\hline
		\multicolumn{1}{c|}{$m,r,d$}& \multicolumn{1}{|c}{FI(det)}   & \multicolumn{1}{c}{FI$^+$(det)} & \multicolumn{1}{c}{BI(det)} & \multicolumn{1}{c}{FI(det)}   & \multicolumn{1}{c}{FI$^+$(det)} & \multicolumn{1}{c}{BI(det)} \\
		&   & &  & \multicolumn{1}{c}{FI(norm)}   & \multicolumn{1}{c}{FI(norm)} & \multicolumn{1}{c}{FI(norm)} \\
		\hline
		1000,050,0.25 & 27.833  & 23.367  & 12.800 & 14.300  & 14.000  & 13.900  \\
		1000,050,0.50 & 46.300  & 36.467  & 20.433 & 31.600  & 32.033  & 31.400  \\
		1000,050,1.00 & 42.333  & 35.367  & 18.333 & 31.867  & 31.900  & 32.000  \\
		1000,100,0.25 & 79.767  & 60.233  & 33.467 & 75.433  & 74.600  & 74.833  \\
		1000,100,0.50 & 20.133  & 16.933  & 8.867  & 17.033  & 16.933  & 16.933  \\
		1000,100,1.00 & 41.900  & 32.367  & 20.633 & 37.600  & 37.767  & 37.600  \\
		2000,100,0.25 & 32.200  & 26.933  & 13.600 & 40.300  & 40.267  & 39.967  \\
		2000,100,0.50 & 67.600  & 54.300  & 34.867 & 87.933  & 88.033  & 87.967  \\
		2000,100,1.00 & 116.667 & 73.633  & 26.467 & 23.767  & 23.967  & 24.267  \\
		2000,200,0.25 & 180.833 & 104.067 & 50.933 & 60.800  & 62.033  & 61.233  \\
		2000,200,0.50 & 103.700 & 59.700  & 21.200 & 57.067  & 57.167  & 56.600  \\
		2000,200,1.00 & 139.700 & 81.300  & 43.400 & 116.267 & 116.767 & 116.833 \\
		\hline
	\end{tabular}
	\caption{Number of swaps (Medium) (generalized inverse) }\label{tab:3}
\end{table}

\begin{figure}[!ht]
\centering
\includegraphics[width=\textwidth]{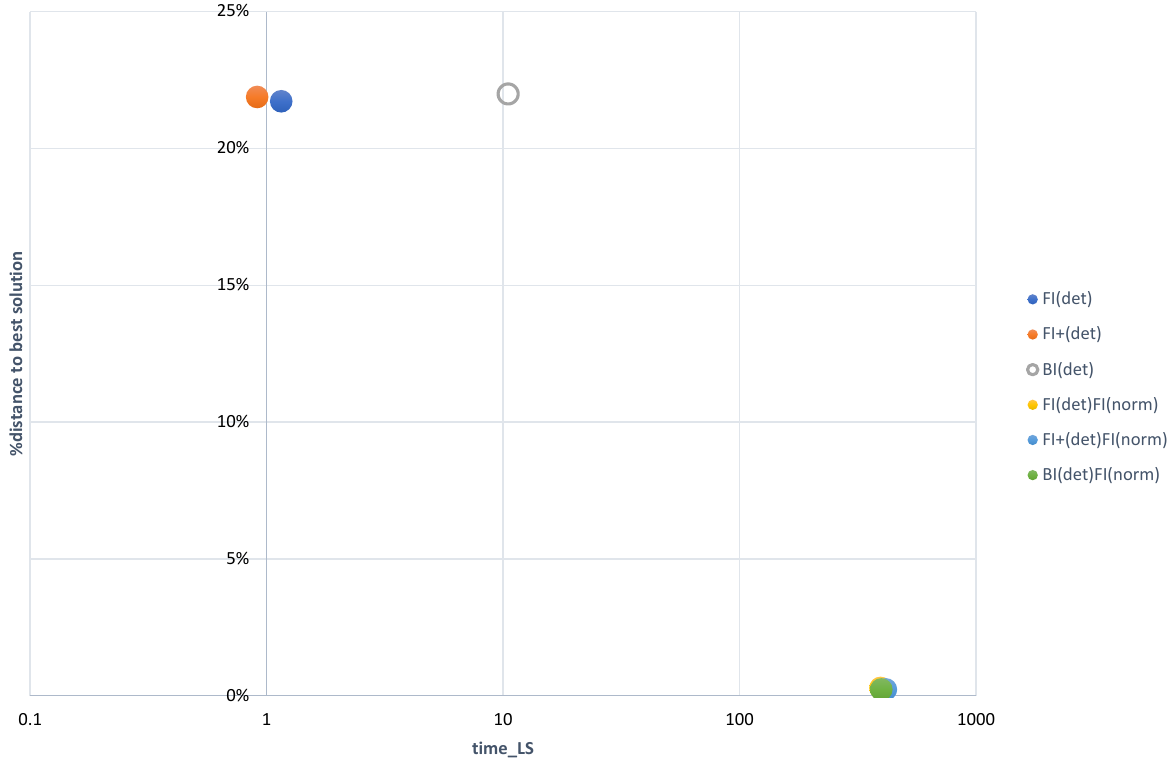}%{}
\caption{ Local searches ($m=2000$, $r=200$, $d=1$) (generalized inverse)}\label{lscomp}
\end{figure}

Finally, our last experiments intend to show the scalability of the procedures, considering matrices with up to 10000 rows, 1000 columns, rank up to 100, and density equal to one. As the BI(det) procedure was not successful in the previous test, we did not run it on the `Large'  category. In Table \ref{tab:5} we see that the average
relative differences between  the 1-norm of the matrix $H$ obtained by each local search based on the determinant and the minimum 1-norm  among the four local searches
are approximately between 9 and 15\%. So, comparing to best solutions found, the searches based on the determinant keep the same quality observed on the smaller instances. Furthermore, these larger matrices are obtained in less than 5 seconds on average. The NSub algorithm had a very good performance on these large instances, being able to compute the initial non-singular submatrices for the local searches in less than 2 seconds on average.  

\begin{table}[!ht]
	\centering
	\tiny
\begin{tabular}{r|rrrrr}
		\hline
		& \multicolumn{5}{|c}{$(\|H\|_1 - \|H_{best}\|_1)/\|H_{best}\|_1$ } \\
		\multicolumn{1}{c|}{$m,n,r$}   &\multicolumn{1}{|c}{FI(det)}   & \multicolumn{1}{c}{FI$^+$(det)} &\multicolumn{1}{c}{FI(det)}   & \multicolumn{1}{c}{FI$^+$(det)} &\\
		&   & &\multicolumn{1}{c}{FI(norm)}   & \multicolumn{1}{c}{FI(norm)} &\\
		\hline
		5000,1000,050  & 0.129 & 0.152 & 0.034 & 0.000 \\
		5000,1000,100  & 0.098 & 0.143 & 0.006 & 0.038 \\
		10000,1000,050 & 0.088 & 0.108 & 0.001 & 0.040 \\
		10000,1000,100 & 0.104 & 0.090 & 0.000 & 0.015 \\
		\hline
		&\multicolumn{5}{|c}{ Time(sec) }\\
		\multicolumn{1}{c|}{$m,n,r$}   &\multicolumn{1}{|c}{FI(det)}   & \multicolumn{1}{c}{FI$^+$(det)} &\multicolumn{1}{c}{FI(det)}   & \multicolumn{1}{c}{FI$^+$(det)} & \multicolumn{1}{c}{NSub}\\
		&   & &\multicolumn{1}{c}{FI(norm)}   & \multicolumn{1}{c}{FI(norm)} &\\
		\hline
		5000,1000,050  & 0.541 & 0.578 & 9.614   & 14.551  & 0.641 \\
		5000,1000,100  & 1.288 & 1.461 & 94.748  & 68.547  & 1.139 \\
		10000,1000,050 & 1.088 & 1.351 & 10.882  & 10.800  & 1.897 \\
		10000,1000,100 & 2.623 & 2.512 & 166.830 & 131.578 & 1.942 \\
		\hline
	\end{tabular}
	\caption{Local searches (Large) (generalized inverse)}\label{tab:5}
\end{table}

%\begin{figure}[!ht]
%\includegraphics[scale=0.5]{Fig_Large_P1_GL_HinitHb.pdf}
%\caption{  $(\|H_{init}\|_1 - \|H\|_1)/\|H_{init}\|_1$ (Large) (generalized inverse)}\label{zfzi12c}
%\end{figure}

%%%%%%%%%%%%%%%%%%%%%%%%%%%%%%%%%%%%%%%%%%%%%%%%%%%%%%%%%%%%%%%%%%%%%%%%%%%%%%%%%%%%%%%%%%%%%%%%%%%%%%%%%%%%%%%%%%%%%%%%%%%%%%%%%%%%%%%%%%%%%%%%%%%%%%%%%%%%%%%%%%%%%

\section{ah-symmetric generalized inverse}\label{sec:ahsymginv}

Recall the key use of an ah-symmetric generalized inverse:
 if $H$ is an ah-symmetric generalized inverse of $A$, then $\hat{x}:=Hb$ solves $\min\{\|Ax-b\|_2:~x\in\mathbb{R}^n\}$.
The local-search procedures for the ah-symmetric reflexive generalized inverse  are based on the block construction procedure presented in \cite{XuFampaLee}. More specifically, on   Theorem \ref{thm:ahconstruction}, Definition \ref{defahconstruction}, and Theorem \ref{thmrwithP3}, presented next.

\begin{theorem}[\cite{XuFampaLee}]\label{thm:ahconstruction}
For $A\in\mathbb{R}^{m\times n}$, let $r := \rank(A)$. For any $T$, an ordered subset of $r$ elements from $\{1,\dots,n\}$, let $\hat{A}:=A[:,T]$ be the $m \times r$ submatrix of $A$ formed by columns $T$. If $\rank(\hat{A})=r$, let
$
\hat{H}
\vrule height 10pt depth 0pt width 0pt
 := \hat{A}^{\dagger} =(\hat{A}^\top\hat{A})^{-1}\hat{A}^\top.
$
The $n \times m$ matrix $H$ with all rows equal to zero, except rows $T$, which are given by $\hat{H}$, is an ah-symmetric reflexive generalized inverse of $A$.
\end{theorem}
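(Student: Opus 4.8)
The plan is to verify directly the three defining properties --- \ref{property1} (generalized inverse), \ref{property2} (reflexive), and \ref{property3} (ah-symmetric) --- after first rewriting $H$ through a column-selection matrix. Let $P := I_n[:,T] \in \mathbb{R}^{n \times r}$ be the matrix whose columns are the standard unit vectors $e_t$ for $t \in T$, taken in the order given by $T$. Then $\hat{A} = AP$, and by construction $H = P\hat{H}$, since left-multiplying $\hat{H}$ by $P$ inserts its $r$ rows into the row positions indexed by $T$ and zeros out the remaining rows. The hypothesis $\rank(\hat{A}) = r$ says that $\hat{A}$ has full column rank, so $\hat{A}^\top\hat{A}$ is invertible and $\hat{H} = (\hat{A}^\top\hat{A})^{-1}\hat{A}^\top$ is well defined; moreover $\hat{H}\hat{A} = (\hat{A}^\top\hat{A})^{-1}\hat{A}^\top\hat{A} = I_r$, and $\hat{A}\hat{H} = \hat{A}(\hat{A}^\top\hat{A})^{-1}\hat{A}^\top$ is the orthogonal projector onto the column space of $\hat{A}$, hence symmetric and idempotent.

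The one step with real content --- and the part I expect to be the crux --- is the claim that the column space of $\hat{A}$ equals the column space of $A$. Since the columns of $\hat{A}$ form a subset of the columns of $A$, its column space is contained in that of $A$; since $\rank(\hat{A}) = r = \rank(A)$, the two subspaces have the same dimension and therefore coincide. Consequently the projector $\hat{A}\hat{H}$ fixes every column of $A$, that is, $\hat{A}\hat{H}A = A$.

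With these observations the three properties fall out by substitution. Because $H$ has zero rows outside $T$, we get $AH = AP\hat{H} = \hat{A}\hat{H}$, which is symmetric, establishing \ref{property3}. Next, $AHA = \hat{A}\hat{H}A = A$ by the column-space identity, establishing \ref{property1}. Finally, $HAH = P\hat{H}AP\hat{H} = P(\hat{H}\hat{A})\hat{H} = PI_r\hat{H} = P\hat{H} = H$, establishing \ref{property2}; alternatively one may invoke Theorem~3.14 of \cite{RohdeThesis} together with $\rank(H) = \rank(\hat{H}) = r = \rank(A)$. The residual bookkeeping --- confirming that the indicated blocks of $H = P\hat{H}$ are exactly $\hat{H}$ and zero --- is routine, so no further obstacle is anticipated.
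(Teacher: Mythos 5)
Your proof is correct. The paper itself does not prove Theorem~\ref{thm:ahconstruction} --- it is imported from \cite{XuFampaLee} --- but your argument is the natural one and matches the standard proof of that result: writing $H=P\hat H$ with a column-selection matrix, using $\hat H\hat A=I_r$ and the fact that $\hat A\hat H$ is the orthogonal projector onto $\mathrm{col}(\hat A)=\mathrm{col}(A)$ to verify \ref{property1}, \ref{property2}, and \ref{property3} directly. No gaps.
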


In the context of the least-square problem, such a ``column block solution''
amounts to choosing a set of $r$ ``explanatory variables'' in the context of multicolinearity
(i.e., dependent columns of $A$), which is highly desirable in terms of explainability.
It remains to choose a good column block solution, by which we mean having entries under control
(via approximate 1-norm minimization).

\begin{definition}[\cite{XuFampaLee}]\label{defahconstruction}
Let $A$ be an arbitrary $m\times n$, rank-$r$ matrix, and let $S$ be an ordered subset of $r$ elements from $\{1,\dots,m\}$ such that these $r$ rows of $A$ are linearly independent. For $T$ an ordered subset of $r$ elements from $\{1,\dots,n\}$, and fixed $\epsilon\ge0$, if $|\det(A[S,T])|$ cannot be increased by a factor of more than $1+\epsilon$ by swapping an element of $T$ with one from its complement, then we say that $A[S,T]$ is a $(1+\epsilon)$-local maximizer for the absolute determinant on the set of $r\times r$ non-singular submatrices of $A[S,:]$.
\end{definition}

\begin{theorem}[\cite{XuFampaLee}]\label{thmrwithP3}
Let $A$ be an arbitrary $m\times n$, rank-$r$ matrix, and let $S$ be an ordered subset of $r$ elements from $\{1,\dots,m\}$ such that these $r$ rows of $A$ are linearly independent. Choose $\epsilon\ge0$, and let $\tilde{A}:=A[S,T]$ be a $(1+\epsilon)$-local maximizer for the absolute determinant on the set of $r\times r$ non-singular submatrices of $A[S,:]$. Then the $n \times m$ matrix $H$ constructed by Theorem \ref{thm:ahconstruction} over $\hat{A}:=A[:,T]$, is an ah-symmetric reflexive generalized inverse of $A$ satisfying $\|H\|_1\le r(1+\epsilon)\|H_{opt}^{ah,r}\|_1$, where $H_{opt}^{ah,r}$ is a $1$-norm minimizing ah-symmetric reflexive generalized inverse of $A$.
\end{theorem}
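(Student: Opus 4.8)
The plan is to reduce the ah-symmetric case to the structure already available from Theorem \ref{thm:ahconstruction} and then to bound the 1-norm entrywise using a Cramer's-rule / local-maximality argument parallel to the one underlying Theorem \ref{thm:approx}. First, since $S$ indexes $r$ linearly independent rows of $A$, the $m\times r$ submatrix $\hat A := A[:,T]$ has $\rank(\hat A) = r$ precisely because it contains the $r\times r$ non-singular block $A[S,T] = \tilde A$; hence Theorem \ref{thm:ahconstruction} applies and the constructed $H$ (zero outside rows $T$, and $\hat H = (\hat A^\top \hat A)^{-1}\hat A^\top$ on rows $T$) is already known to be an ah-symmetric reflexive generalized inverse. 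So only the norm inequality $\|H\|_1 \le r(1+\epsilon)\|H^{ah,r}_{opt}\|_1$ remains.

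Next I would set up the comparison. Let $H^{ah,r}_{opt}$ be a 1-norm minimizing ah-symmetric reflexive generalized inverse. The key observation is that $\|H\|_1 = \|\hat H\|_1$ (the added rows are zero), and that $\hat H = \hat A^\dagger$ has the form $(\hat A^\top \hat A)^{-1}\hat A^\top$. The crucial structural fact I would exploit is that the rows of $\hat H$ are the rows of $\tilde A^{-1}$ times appropriate combinations of columns of $\hat A$: concretely, because $\tilde A = \hat A[S,:]$ is an invertible $r\times r$ submatrix of $\hat A$, one can write $\hat H = \tilde A^{-1}\hat A[S,:]\hat A^\dagger = \tilde A^{-1}(P\hat A)$ where $P = \hat A \hat A^\dagger$ is the orthogonal projector onto the column space — but more usefully, I would express each entry $\hat H_{jk}$ via Cramer's rule applied to $\tilde A$ and a column of $A$, so that $|\hat H_{jk}|$ is a ratio of an absolute determinant of an $r\times r$ submatrix of $A[S,:]$ (with one column swapped) to $|\det(\tilde A)|$. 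Local maximality of $\tilde A$ on the $r\times r$ non-singular submatrices of $A[S,:]$ (Definition \ref{defahconstruction}) then forces each such ratio to be at most $1+\epsilon$, giving $\|\hat H\|_{\max} \le (1+\epsilon)\cdot(\text{something involving }\|H^{ah,r}_{opt}\|)$; summing over the at-most-$r^2$ nonzeros and then sharpening the counting — using that each of the $r$ nonzero rows contributes, and relating a row of $\hat H$ to a single entry bound — yields the factor $r(1+\epsilon)$ rather than $r^2(1+\epsilon)$.

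The place where I expect the real work to lie — and the main obstacle — is getting the \emph{linear} factor $r$ rather than the quadratic $r^2$ that the generalized-inverse argument produces. In Theorem \ref{thm:approx} one bounds all $r^2$ entries of $\tilde A^{-1}$ individually against $\|H_{opt}\|_{\max}$ and sums. Here the gain must come from the ah-symmetry: because $\hat H \hat A = I_r$, each \emph{row} of $\hat H$ is forced to ``do the work'' of one coordinate, and one should be able to lower bound $\|H^{ah,r}_{opt}\|_1$ by $r$ times a per-row quantity (via $\langle e_k, \hat H \hat A e_k\rangle = 1$ and a Hölder-type estimate comparing against a corresponding row-block of $H^{ah,r}_{opt}$), while the local-maximizer property caps each of the $r$ rows of our $\hat H$ by $(1+\epsilon)$ times that same per-row quantity. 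Making this row-wise comparison precise — in particular identifying the right ``per-row'' functional of $H^{ah,r}_{opt}$ and showing our constructed $\hat H$ satisfies the matching upper bound — is the technical heart; the determinant/Cramer bookkeeping and the verification that $H$ has at most $r\cdot m$ (in fact, at most $r^2$ relevant) nonzeros are then routine.
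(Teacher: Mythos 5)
Your first step is fine: since $\hat{A}:=A[:,T]$ contains the non-singular block $\tilde{A}=A[S,T]$, it has rank $r$, so Theorem \ref{thm:ahconstruction} already certifies that $H$ is an ah-symmetric reflexive generalized inverse. But the norm bound --- the only part actually requiring proof --- is where your proposal has a genuine gap, and one of its intermediate claims is false. You assert that each entry of $\hat{H}=\hat{A}^{\dagger}=(\hat{A}^{\top}\hat{A})^{-1}\hat{A}^{\top}$ is a Cramer's-rule ratio of determinants of $r\times r$ submatrices of $A[S,:]$, hence bounded by $1+\epsilon$. That is not so: $\hat{A}^{\dagger}$ depends on all $m$ rows of $\hat{A}$, not only the rows $S$, and its entries are not such ratios (your expression $\tilde{A}^{-1}(P\hat{A})$ is also dimensionally inconsistent, since $P\hat{A}=\hat{A}$ is $m\times r$). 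The matrix whose entries \emph{are} Cramer ratios against $\tilde{A}$ --- and which local maximality bounds entrywise by $1+\epsilon$ via Remark \ref{remcol} --- is $W:=\tilde{A}^{-1}A[S,:]\in\mathbb{R}^{r\times n}$, the coordinate matrix expressing every column of $A$ in the column basis $T$. You then explicitly leave the derivation of the linear factor $r$ as an unresolved ``technical heart,'' offering only a speculative H\"older/row-wise comparison; that is the missing idea, not a routine verification.

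The actual mechanism is a single identity that your proposal never reaches. For \emph{any} $\check{H}$ satisfying \ref{property1} and \ref{property3}, Proposition \ref{prop1} gives $A\check{H}=AA^{\dagger}$, and $AA^{\dagger}=\hat{A}\hat{A}^{\dagger}$ because both are the orthogonal projector onto the common column space of $A$ and $\hat{A}$. Left-multiplying by $\hat{A}^{\dagger}$ and using $\hat{A}^{\dagger}\hat{A}=I_r$ yields
\[
\hat{H}=\hat{A}^{\dagger}=\hat{A}^{\dagger}A\check{H}=W\check{H},\qquad W=\hat{A}^{\dagger}A=\tilde{A}^{-1}A[S,:].
\]
Since every entry of $W$ has absolute value at most $1+\epsilon$ by the local-maximizer hypothesis, each column of $W$ has 1-norm at most $r(1+\epsilon)$, and the elementary estimate $\|W\check{H}\|_1\le\bigl(\max_j\|W[:,j]\|_1\bigr)\|\check{H}\|_1$ gives $\|H\|_1=\|\hat{H}\|_1\le r(1+\epsilon)\|\check{H}\|_1$; taking $\check{H}=H_{opt}^{ah,r}$ concludes. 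Note the factor $r$ is simply the number of entries in a column of $W$, not a consequence of the per-row duality you sketch, and the argument in fact bounds $\|H\|_1$ against any ah-symmetric generalized inverse, reflexive or not.
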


As before, the $\epsilon$ of Definition \ref{defahconstruction} and Theorem \ref{thmrwithP3} is used
in \cite{FampaLee2018ORL} to gain polynomial running time in $1/\epsilon$. For the purpose of actual computations,
our observation has been that $\epsilon$ can be chosen to be zero.
We further note that in \cite{XuFampaLee}, we demonstrated that the
bound in Theorem \ref{thm:approx} is the best possible. However, we
will see in our experiments that the bound is overly pessimistic
by a wide margin.

The idea of the algorithms considered in this section is to select an $m\times r$ rank-$r$ submatrix of $A$, and construct an ah-symmetric reflexive generalized inverse of $A$ with the M-P pseudoinverse of this submatrix, as described in Theorem \ref{thm:ahconstruction}.

Next, we give details of the algorithms and present  numerical results.
The  test matrices used in the computational experiments are the same 462 matrices considered in the previous section, and the same $r\times r$ non-singular submatrices of $A$ constructed by  the NSub algorithm,
%algorithms Greedy and  Greedy Light, 
were used to initialize the local-search procedures discussed in this section.

To analyze the local-search procedures proposed, we  compare their solutions to the solutions of LP problem identified below as $P_{123}$. Its solutions corresponds to $\|H_{opt}^{ah,r}\|_1$. As defined in Theorem \ref{thmrwithP3},   $H_{opt}^{ah,r}$ is a $1$-norm minimizing ah-symmetric \emph{reflexive} generalized inverse of $A$.
In order to formulate $P_{123}$ as an LP problem, we linearize \ref{property2}, using the following result.

\begin{proposition}(see, for example, \cite[Proposition 4.3]{FFL2016})
\label{prop1}
If $H$ satisfies \ref{property1} and \ref{property3}, then $AH=AA^{\dagger}$.
\end{proposition}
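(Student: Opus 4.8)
The plan is to show both inclusions of the stated identity by using the four M-P properties together with the uniqueness clause of the characterization theorem (the one attributed to \cite{Penrose} at the start of the paper). The statement to prove is that if $H$ satisfies \ref{property1} ($AHA=A$) and \ref{property3} ($AH$ symmetric), then $AH=AA^\dagger$. I would first recall that $A^\dagger$ itself satisfies all four of \ref{property1}--\ref{property4}; in particular $AA^\dagger A = A$ and $(AA^\dagger)^\top = AA^\dagger$.

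The first key step is to compute $AH$ in two ways using the fact that $A = AA^\dagger A$. On one hand, $AH = (AA^\dagger A)H = (AA^\dagger)(AH)$. On the other hand, taking transposes and using that both $AH$ and $AA^\dagger$ are symmetric, $AH = (AH)^\top = H^\top A^\top = H^\top (AA^\dagger A)^\top = H^\top A^\top (AA^\dagger)^\top = (AH)^\top (AA^\dagger) = (AH)(AA^\dagger)$. So we have the two relations
\[
AH = (AA^\dagger)(AH) \quad\text{and}\quad AH = (AH)(AA^\dagger).
\]
By the symmetric argument applied with the roles of $H$ and $A^\dagger$ swapped --- noting that $A^\dagger$ also satisfies \ref{property1} and \ref{property3} --- one likewise gets $AA^\dagger = (AH)(AA^\dagger)$ and $AA^\dagger = (AA^\dagger)(AH)$. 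Combining $AH = (AA^\dagger)(AH)$ with $AA^\dagger = (AA^\dagger)(AH)$ does not immediately finish it, so the real step is the next one.

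The second key step is to observe that $AH$ and $AA^\dagger$ are both symmetric idempotents (orthogonal projectors): $AH$ is symmetric by \ref{property3}, and $(AH)(AH) = (AHA)H = AH$ by \ref{property1}; similarly for $AA^\dagger$. Two orthogonal projectors $P=AH$ and $Q=AA^\dagger$ with $PQ = P$ and $QP = Q$ (established above) must be equal: from $PQ=P$ take transposes to get $QP = P$ as well (both are symmetric), hence $P = QP = Q$. This is the cleanest route and I expect the main obstacle to be precisely the bookkeeping that establishes $PQ=P$ and $QP=Q$ from the hypotheses --- i.e., making sure each transpose manipulation only uses \ref{property1} and \ref{property3} for $H$ and the full M-P property for $A^\dagger$, and not accidentally \ref{property2} or \ref{property4} for $H$. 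An alternative, perhaps even shorter, finish: since $AH$ is the orthogonal projector onto $\mathrm{col}(AHA)=\mathrm{col}(A)$ (the column space is fixed once \ref{property1} holds, because $AHA=A$ forces $\mathrm{col}(AH)\supseteq\mathrm{col}(A)$ and trivially $\mathrm{col}(AH)\subseteq\mathrm{col}(A)$), and $AA^\dagger$ is also the orthogonal projector onto $\mathrm{col}(A)$, and an orthogonal projector is uniquely determined by its range, the two must coincide. I would present the projector-uniqueness version as the main proof and perhaps remark on the range-space version as the conceptual reason.
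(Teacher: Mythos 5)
The paper does not actually prove Proposition~\ref{prop1}; it states it with only the citation to \cite[Proposition 4.3]{FFL2016}, so there is no in-paper argument to compare against. Your proof is correct, and it is essentially the standard one: $AH$ and $AA^{\dagger}$ are symmetric idempotents (symmetry from \ref{property3}, idempotence from $(AH)(AH)=(AHA)H=AH$ via \ref{property1}) whose common range is $\mathrm{col}(A)$, and an orthogonal projector is determined by its range. One remark: your intermediate identities already finish the proof with no appeal to projector uniqueness, and more directly than you claim. Writing $P:=AH$ and $Q:=AA^{\dagger}$, your second computation gives $P=PQ$, and the ``swapped'' computation $AA^{\dagger}=(AHA)A^{\dagger}=(AH)(AA^{\dagger})$ gives $Q=PQ$; both sides equal the common product $PQ$, so $P=Q$ on the spot (likewise $P=QP$ and $Q=QP$ yield the same conclusion). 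Your sentence asserting that combining $AH=(AA^{\dagger})(AH)$ with $AA^{\dagger}=(AA^{\dagger})(AH)$ ``does not immediately finish it'' is therefore mistaken, but in the harmless direction --- it does finish it, and the subsequent projector-uniqueness step is redundant rather than wrong. The bookkeeping you were worried about is sound: every transpose manipulation uses only \ref{property1} and \ref{property3} for $H$ together with the full M-P properties of $A^{\dagger}$, never \ref{property2} or \ref{property4} for $H$.
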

%\begin{proof}
%\begin{eqnarray*}
%AHA &=& AA^\dagger A \qquad \mbox{(by \ref{property1})}\\
%H'A'A &=& (A^\dagger )'A'A \qquad \mbox{(by \ref{property3})}\\
%A'AH &=& A'AA^\dagger \\
%(A^\dagger )'A'AH &=& (A^\dagger )'A'AA^\dagger \\
%AH&=&AA^\dagger ,
%\end{eqnarray*}
%the last equation following directly from a well-known property of $A^\dagger $.
%\end{proof}
Therefore, if  $H$ satisfies \ref{property1} and \ref{property3},
then \ref{property2} can be reformulated as the linear equation
\begin{equation}\label{p2lin}
HAA^{\dagger}=H.
\end{equation}
Considering \eqref{p2lin}, we then have

\[
\begin{array}{lll}
(P_{123}) \ z_{P_{123}}:=&\min  & \left\langle J , T \right\rangle~,  \\
& \mbox{s.t.:}
             &T - H \geq 0~,\\
		     &&T + H \geq 0~,\\
		     &&AHA=A~,\\
		     &&(AH)^{\top}=(AH)~,\\
                  &&HAA^{\dagger}=H~.
\end{array}
\]

%Given rank-r A
%Select any r linear independent rows of A (could be by the greedy algorithm) .
%Let S be the set of the selected rows.
%Do local search (best improvement and first improvement) to find the r xr submatrix A[S,T] that  is a local maximizer for the absolute determinant on the
%set of r × r non-singular submatrices of A[S, :].
%Let Ahat=A[:,T]
%Let  Hhat = pinv(Ahat) = (Ahat'Ahat)^(-1) Ahat'
%return ||Hhat||_1

It is important to note that in the case of a generalized inverse, we
could only compare the 1-norm quality of our solution to the
optimal value of the  LP $P_1$, ignoring
the reflexivity condition \ref{property2}. Here, we can compare
to the optimal value of th LP $P_{123}$ because \ref{property2} can be linearized
when \ref{property3} is imposed.

\subsection{The local-search procedures}

In Algorithms \ref{AlgFI-p13} and  \ref{AlgBI-p13}, we present the local-search procedures that consider as the criterion for improvement of the given solution, the increase in the absolute determinant of the current  $r\times r$ non-singular submatrix of $A$. Based on Theorem \ref{thmrwithP3},  the procedures start from a set $S$ of $r$ rows and a set $T$ of $r$ columns of $A$, such that $A[S,T]$ is non-singular. We note that unlike  what is done in Algorithms \ref{AlgLSFI} and  \ref{AlgBI}, in Algorithms \ref{AlgFI-p13} and \ref{AlgBI-p13} only columns of $A[S,T]$ are considered to be exchanged in order to increase the determinant. From the result in Theorem \ref{thmrwithP3}, we see that \emph{any} set $S$ of $r$ linearly-independent rows of $A$ could be used in the search for a local maximizer for the absolute determinant on the set of $r\times r$ non-singular submatrices of $A[S,:]$. The initial submatrix $A[S,T]$ is obtained by the NSub algorithm (Algorithm \ref{AlgNSub}).  
%{\color{red}Algorithm  Greedy or Greedy Light, described in  \S\ref{secinit}}.

\begin{algorithm}[!ht]
	\footnotesize{
		\KwIn{ $A\in \mathbb{R}^{m\times n}$, with $r:=$rank($A$), \\$S\subset M:=\{1,\ldots,m\}$, $T\subset  N:=\{1,\ldots,n\}$,  such that $|S|=|T|=r$, and $A[S,T]$ is non-singular. }
		\KwOut{ possibly updated set $T$. }		
$\tilde{A}:=A[S,T]$\;
$\bar{N}:=N\setminus T$\;
$ C:=A[S,\bar{N}]$\;
$[L,U]:= LU(\tilde{A})$ (Compute the LU factorization of $\tilde{A}$)\;
$cont= true$\;
\While {($cont$)}
{
$cont=false$\;
\For {$\ell =1,\ldots, n-r$}
{
Solve $Ly=C[S,\ell]$, with solution $\hat{y}$\;
Solve $U\alpha=\hat{y}$, with solution $\hat{\alpha}$\;
$[\hat{\alpha}^{\ell}_{\max},\hat{\jmath}]:=\max_{i}\{|\hat{\alpha}_i|\}$\;
\If {$|\hat{\alpha}|\nleq (1,\ldots,1)^\top$}
{
$\hat{\jmath}:=\min\{j :  |\hat{\alpha}_j|>1\}$ for `FI(det)', or $\hat{\jmath}:=\mbox{argmax}_j\{|\hat{\alpha}_j|\}$ for `FI$^+$(det)'\;
$aux:=\tilde{A}[S,\hat{\jmath}]$\;
$\tilde{A}[S,\hat{\jmath}] := C[S,\ell]$\;
$C[S,\ell]:=aux$\;
$T:=T\cup\{\bar{N}(\ell)\}\setminus \{T(\hat{\jmath})\}$\;
$\bar{N}:=\bar{N}\setminus\{\bar{N}(\ell)\}\cup \{T(\hat{\jmath})\}$\;
$[L,U]:= LU(\tilde{A})$ (update LU factorization of previous iteration)\;
$cont=true$\;
}
}
}
\caption{`FI(det)' (`FI$^+$(det)')  for ah-symmetric generalized inverses  \label{AlgFI-p13} }
}
\end{algorithm}
\begin{algorithm}
	\footnotesize{
\KwIn{ $A\in \mathbb{R}^{m\times n}$, with $r:=$rank($A$), \\$S\subset M:=\{1,\ldots,m\}$, $T\subset  N:=\{1,\ldots,n\}$,  such that $|S|=|T|=r$, and $A[S,T]$ is non-singular. }
\KwOut{ possibly updated set $T$. }		
$\tilde{A}:=A[S,T]$\;
$\bar{N}:=N\setminus T$\;
$ C:=A[S,\bar{N}]$\;
$[L,U]:= LU(\tilde{A})$ (Compute the LU factorization of $\tilde{A}$)\;
$biggest.\alpha_r:=1$\;
$cont:= true$\;
\While {($cont$)}
{
$cont:=false$\;
\For {$\ell =1,\ldots, n-r$}
{
Solve $Ly=C[S,\ell]$, with solution $\hat{y}$\;
Solve $U\alpha=\hat{y}$, with solution $\hat{\alpha}$\;
$\hat{\alpha}_{\max} := \max_j\{|\hat{\alpha}_j|\}$\;
\If {$ \hat{\alpha}_{\max} >  biggest.\alpha_r$}
{
$biggest.\alpha_r:=\hat{\alpha}_{\max}$\;
$\hat{\jmath}_r:=\mbox{argmax}_j\{|\hat{\alpha}_j|\}$\;
$\hat{\ell}_r:=\ell$\;
}
}
\If {$biggest.\alpha_r > 1$}
{
$cont:=true$\;
$aux:=\tilde{A}[S,\hat{\jmath}_r]$\;
$\tilde{A}[S,\hat{\jmath}_r] := C[S,\hat{\ell}_r]$\;
$C[S,\hat{\ell}_r]:=aux$\;
$T:=T\cup\{\bar{N}(\hat{\ell}_r)\}\setminus \{T(\hat{\jmath}_r)\}$\;
$\bar{N}:=\bar{N}\setminus\{\bar{N}(\hat{\ell}_c)\}\cup \{T(\hat{\jmath}_r)\}$\;
$[L,U]:= LU(\tilde{A})$ (update LU factorization of previous iteration)\;
}

}
\caption{`BI(det)'   for ah-symmetric generalized inverses  \label{AlgBI-p13} }
}
\end{algorithm}

We present in Algorithm  \ref{AlgnormLSFIp3} the `FI(norm)' for ah-symmetric  generalized inverses. The algorithm is obtained by  excluding from Algorithm \ref{AlgnormLSFI}, the loop where row exchanges are performed, and also by replacing the inverses of the matrices $B$ and $B_{\gamma/j}$, with their pseudoinverses.

To evaluate how much the 1-norm of the M-P pseudoinverse of the submatrix changes when each column of $A[:,T]$ is replaced by a given column $\gamma$  of $A[:,N\setminus T]$, we use the result in Remark \ref{remnormcolpinv}.

\begin{jremark}\label{remnormcolpinv}(see, for example, \cite{meyer1973generalized})
Let $\gamma = Av \in \mathbb{R}^m$ and $A:=(a_1,  \ldots, a_j\mathbin{,}\ldots\mathbin{,} a_r) \mathbin{\in} \mathbb{R}^{m\times r}$ with $\rank(A)=r$. Let $A_{\gamma/j}$ be the matrix obtained by replacing the $j^{th}$ column of
$A$ by $\gamma$. If $v_j\neq 0$, define \[
\bar{v}:=\left(-\frac{v_1}{v_j}, \ldots, -\frac{v_{j-1}}{v_j},\  \frac{1}{v_j},\  -\frac{v_{j+1}}{v_j}, \ldots, -\frac{v_r}{v_j}\right)^\top.
\]
Then
\[
A_{\gamma/j}^{\dagger} = \Theta \ A^{\dagger},
\]
where
\[
\Theta = (e_1, \ldots, e_{j-1},\  \bar{v}, \ e_{j+1},\ldots, e_r),
\]
and $e_i$ are the standard unit vectors.
\end{jremark}
\begin{proof}
Notice that $A_{\gamma/j}=A\Theta^{-1}$. We could verify that
$$
A_{\gamma/j}^\dagger A_{\gamma/j} =  I_r, ~A_{\gamma/j}A_{\gamma/j}^\dagger  = AA^\dagger ,
$$
which implies $A_{\gamma/j}^\dagger $ is the M-P pseudoinverse of $A_{\gamma/j}$.
\qed
\end{proof}

\begin{algorithm}[!ht]
	\footnotesize{
		\KwIn{ $A\in \mathbb{R}^{m\times n}$, with $r:=$rank($A$), \\ $T\subset  N:=\{1,\ldots,n\}$,  such that $|T|=r$, and $A[:,T]$ has rank $r$. }
		\KwOut{ possibly updated set $T$. }		
		%\vspace{0.2cm}
$\bar{N}:=N\setminus T$\;
%$[L,U]:= LU(\tilde{A})$ (Compute the LU factorization of $\tilde{A}$)\;
$B=A[:,T]$\;
$Bpinv:=(B^\top B)^{-1}B^\top$\;
$nBpinv=\|Bpinv\|_1$\;
$cont = true$\;
\While {($cont$)}
{
$cont=false$\;
\For {$\ell =1,\ldots, n-r$}
{
$\gamma:=A[:,\bar{N}(\ell)]$\;
\For {$j =1,\ldots, r$}
{
Let $B_{\gamma/j}$ be the matrix obtained by replacing the $j^{th}$ column of
$B$ by $\gamma$\;
$Bpinv^+:= (B_{\gamma/j}^\top B_{\gamma/j})^{-1}B_{\gamma/j}^\top$\;
$nBpinv^+:=\|Bpinv^+\|_1$\;
\If {$nBpinv^+ < nBpinv$}
{
$B := B_{\gamma/j}$\;
$Bpinv := Bpinv^+$\;
$nBpinv=nBpinv^+$\;
$T:=T\cup\{\bar{N}(\ell)\}\setminus \{T(j)\}$\;
$\bar{N}:=N\setminus T$\;
$cont=true$\;
break \;
}
}
}
}
\caption{`FI(norm)' for ah-symmetric generalized inverses. \label{AlgnormLSFIp3} }
}
\end{algorithm}

%As in the previous section, the algorithm for  `BI(norm)' differs from Algorithm \ref{AlgnormLSFIp3}, only on what concerns the choice of the column  to be replaced in the current submatrix of $A$. For the `BI'  local search, instead of considering the first decrease in the 1-norm of $H$,   obtained in the loop described in lines 10-21, the algorithm computes the modification in the 1-norm obtained for each index $j$, and selects the column that leads to the greatest decrease.

\subsection{Numerical results} \label{results_ahrg}
Similarly to the previous section, we initially consider the experiments done with the 90 instances in the `Small' category, with the purpose of analyzing the ratios between the
the 1-norm of the matrices $H$ computed by the three local searches based on the determinant, with the minimum 1-norm of a ah-symmetric generalized inverse given by the solution of the LP problem $P_{123}$  $(\|H\|_1/z_{P_{123}})$. We now aim at checking how close these ratios are from the upper bound given by Theorem \ref{thmrwithP3}.

In   Figure \ref{barratplp1densP3},  we present the average ratios   for the matrices with the same dimension, rank, and density.  From Theorem \ref{thmrwithP3}, we know that these ratios cannot be greater than $r$, and we also see from the results, that for the matrices considered in our tests, we stay quite far from this upper bound. In general, the ratios increase with the rank $r$, the dimension $m=n$, and the density $d$  of the matrices,  but even for $r=50$, we  obtain ratios smaller than 1.5.

\begin{figure}[!ht]
\centering
\includegraphics[scale=0.5]{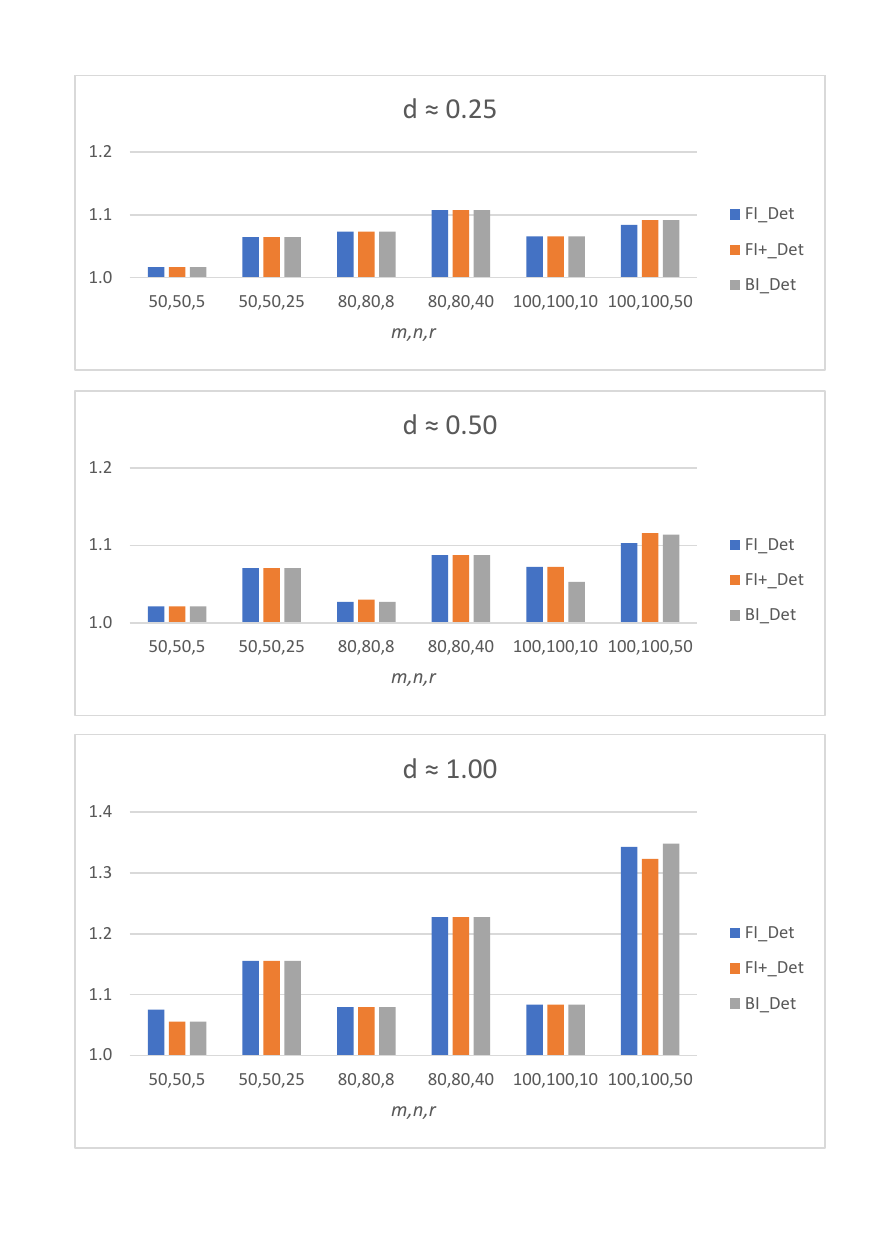}%{}
\caption{ $\|H\|_1/z_{P_{123}}$ (Small) (ah-symmetric generalized inverse)}\label{barratplp1densP3}
\end{figure}

In Table \ref{tab:1P3}, besides presenting the average ratios depicted in Figure \ref{barratplp1densP3}, we also  present the average running time to compute the generalized inverses. In case of the local searches, the total time to compute the generalized inverse is given by the sum of the time to generate the initial matrix $H$ by the NSub algorithm (Algorithm \ref{AlgNSub}), and the time of the local search (FI(det), FI$^+$(det), or BI(det)).

\begin{table}[!ht]
	\centering
	\tiny
\begin{tabular}{r|lll|rrlll}
		\hline
		&\multicolumn{3}{|c|}{ $\|H\|_1/z_{P_{123}}$}&\multicolumn{5}{|c}{Time(sec)}\\
		\multicolumn{1}{c|}{$m,r,d$}& \multicolumn{1}{|c}{FI(det)}   & \multicolumn{1}{c}{FI$^+$(det)} & \multicolumn{1}{c|}{BI(det)} & \multicolumn{1}{|c}{$P_{123}$} &   \multicolumn{1}{c}{NSub} &\multicolumn{1}{c}{FI(det)}   & \multicolumn{1}{c}{FI$^+$(det)} & \multicolumn{1}{c}{BI(det)} \\
		\hline
		50,05,0.25  & 1.017 & 1.017 & 1.017 & 1.526    & 0.091 & 0.005 & 0.004 & 0.005 \\
		50,25,0.25  & 1.065 & 1.065 & 1.065 & 5.506    & 0.094 & 0.002 & 0.001 & 0.001 \\
		80,08,0.25  & 1.073 & 1.073 & 1.073 & 5.325    & 0.077 & 0.002 & 0.002 & 0.003 \\
		80,40,0.25  & 1.108 & 1.108 & 1.108 & 34.729   & 0.149 & 0.003 & 0.003 & 0.006 \\
		100,10,0.25 & 1.066 & 1.066 & 1.066 & 11.695   & 0.090 & 0.003 & 0.003 & 0.006 \\
		100,50,0.25 & 1.084 & 1.092 & 1.092 & 93.004   & 0.266 & 0.007 & 0.005 & 0.011 \\
		50,05,0.50  & 1.022 & 1.022 & 1.022 & 1.807    & 0.065 & 0.002 & 0.001 & 0.002 \\
		50,25,0.50  & 1.071 & 1.071 & 1.071 & 7.873    & 0.084 & 0.002 & 0.002 & 0.003 \\
		80,08,0.50  & 1.027 & 1.030 & 1.027 & 8.427    & 0.068 & 0.002 & 0.002 & 0.004 \\
		80,40,0.50  & 1.088 & 1.088 & 1.088 & 102.428  & 0.131 & 0.005 & 0.003 & 0.008 \\
		100,10,0.50 & 1.073 & 1.073 & 1.053 & 24.495   & 0.094 & 0.003 & 0.003 & 0.005 \\
		100,50,0.50 & 1.103 & 1.116 & 1.114 & 481.614  & 0.288 & 0.011 & 0.006 & 0.023 \\
		50,05,1.00  & 1.075 & 1.056 & 1.056 & 2.699    & 0.003 & 0.004 & 0.003 & 0.005 \\
		50,25,1.00  & 1.155 & 1.155 & 1.155 & 41.476   & 0.036 & 0.004 & 0.003 & 0.007 \\
		80,08,1.00  & 1.079 & 1.079 & 1.079 & 32.213   & 0.018 & 0.003 & 0.002 & 0.007 \\
		80,40,1.00  & 1.227 & 1.227 & 1.227 & 692.151  & 0.088 & 0.009 & 0.005 & 0.019 \\
		100,10,1.00 & 1.084 & 1.084 & 1.084 & 169.194  & 0.020 & 0.004 & 0.004 & 0.012 \\
		100,50,1.00 & 1.343 & 1.323 & 1.348 & 3672.983 & 0.139 & 0.017 & 0.010 & 0.040 \\
		\hline
	\end{tabular}
	\caption{Local Searches for ah-symmetric generalized inverse vs. $P_{123}$}\label{tab:1P3}
\end{table}

We see from Figure \ref{barratplp1densP3} and  Table \ref{tab:1P3}, that  the three local searches converge to solutions of similar quality on most of the experiments. 
%In cases where  the ratios have small differences,  there is no clear winner among the three local searches.
We also see in Table \ref{tab:1P3} that the running times to solve $P_{123}$ increase quickly  with the dimension of the matrix, and are much higher than the times for the local searches.

Next, we consider the experiments done with the 360 instances in the `Medium' category, which had the main purpose of comparing the different local searches  proposed.  %{\color{red}Greedy Light algorithm} 
 We  present in Table \ref{tab:2P3} average results for each group of 30 instances with the same configuration, described in the first column. In the next three columns, we present statistics for the local searches based on the determinant, which are initialized  with the solutions given by the NSub algorithm, and in the last three columns we consider the application of the local searches based on the 1-norm of  $H$, which are initialized with  the solutions given by the three first local searches. In the first half of the table,  we show the mean and standard deviation of the %{\color{red} improvements in} 
 relative difference between  the 1-norm of the matrix $H$ obtained by each local search and the minimum value  among all of them, denoted by $||H_{best}||_1$.

\begin{table}[!ht]
	\centering
	\tiny
\begin{tabular}{l|c|c|c|c|c|c}
		\hline
		\multicolumn{1}{c|}{$m,r,d$}& \multicolumn{1}{|c|}{FI(det)}   & \multicolumn{1}{|c|}{FI$^+$(det)} & \multicolumn{1}{|c|}{BI(det)} & \multicolumn{1}{|c|}{FI(det)}   & \multicolumn{1}{c}{FI$^+$(det)} & \multicolumn{1}{|c}{BI(det)} \\
		&   & & & \multicolumn{1}{|c|}{FI(norm)}   & \multicolumn{1}{c}{FI(norm)} & \multicolumn{1}{|c}{FI(norm)} \\
		\hline
		&\multicolumn{6}{|c}{ $(\|H\|_1 - \|H_{best}\|_1)/\|H_{best}\|_1$ }\vphantom{$\Sigma^{R}$} \\[3pt]
		\hline
		1000,050,0.25 & 0.029/0.026 & 0.029/0.026 & 0.029/0.026 & 0.000/0.000 & 0.000/0.000 & 0.000/0.000 \\
		1000,050,0.50 & 0.045/0.022 & 0.045/0.022 & 0.044/0.022 & 0.000/0.002 & 0.000/0.002 & 0.000/0.000 \\
		1000,050,1.00 & 0.042/0.022 & 0.041/0.020 & 0.041/0.020 & 0.000/0.000 & 0.000/0.000 & 0.000/0.000 \\
		1000,100,0.25 & 0.064/0.017 & 0.064/0.017 & 0.064/0.018 & 0.000/0.000 & 0.000/0.001 & 0.001/0.004 \\
		1000,100,0.50 & 0.042/0.028 & 0.042/0.028 & 0.042/0.028 & 0.000/0.000 & 0.000/0.000 & 0.000/0.000 \\
		1000,100,1.00 & 0.064/0.028 & 0.066/0.030 & 0.066/0.030 & 0.001/0.003 & 0.001/0.003 & 0.001/0.003 \\
		2000,100,0.25 & 0.056/0.020 & 0.056/0.020 & 0.056/0.020 & 0.000/0.000 & 0.000/0.000 & 0.000/0.000 \\
		2000,100,0.50 & 0.082/0.026 & 0.083/0.027 & 0.083/0.027 & 0.000/0.001 & 0.001/0.001 & 0.000/0.001 \\
		2000,100,1.00 & 0.052/0.030 & 0.054/0.032 & 0.053/0.029 & 0.001/0.003 & 0.002/0.007 & 0.003/0.006 \\
		2000,200,0.25 & 0.089/0.037 & 0.095/0.041 & 0.095/0.035 & 0.002/0.005 & 0.004/0.007 & 0.006/0.010 \\
		2000,200,0.50 & 0.062/0.017 & 0.062/0.016 & 0.061/0.017 & 0.001/0.003 & 0.000/0.001 & 0.000/0.001 \\
		2000,200,1.00 & 0.098/0.024 & 0.100/0.025 & 0.100/0.029 & 0.002/0.005 & 0.001/0.004 & 0.002/0.004 \\
		\hline
		&\multicolumn{6}{|c}{ Time(sec) }\\
		\hline
		1000,050,0.25 & 0.051/0.010 & 0.048/0.005 & 0.079/0.059 & 8.505/2.592      & 8.338/2.798      & 8.220/2.626      \\
		1000,050,0.50 & 0.119/0.040 & 0.098/0.023 & 0.215/0.229 & 32.498/10.200    & 32.487/10.583    & 32.339/10.694    \\
		1000,050,1.00 & 0.205/0.047 & 0.199/0.026 & 0.325/0.115 & 148.958/37.366   & 147.906/38.710   & 145.106/36.993   \\
		1000,100,0.25 & 0.643/0.205 & 0.490/0.107 & 1.559/1.769 
% 		& 995.642/257.786  & 978.292/227.298  & 991.326/232.098  \\
	& 995.64/257.79  & 978.29/227.30  & 991.33/232.10  \\
		1000,100,0.50 & 0.049/0.012 & 0.048/0.009 & 0.076/0.071 & 14.223/3.432     & 14.289/3.758     & 14.151/3.722     \\
		1000,100,1.00 & 0.129/0.043 & 0.106/0.024 & 0.397/0.310 & 55.756/12.981    & 56.009/13.701    & 56.973/12.637    \\
		2000,100,0.25 & 0.176/0.048 & 0.201/0.032 & 0.350/0.174 & 233.947/35.658   & 234.675/36.255   & 236.703/39.453   \\
		2000,100,0.50 & 0.594/0.184 & 0.441/0.155 & 2.975/3.004 & 
% 		1467.393/275.767 & 1445.031/311.962 & 1454.699/312.040 \\
		1467.39/275.77 & 1445.03/311.96 & 1454.70/312.04 \\
		2000,100,1.00 & 0.069/0.030 & 0.058/0.019 & 0.332/0.239 & 23.153/6.454     & 22.469/6.044     & 22.610/6.376     \\
		2000,200,0.25 & 0.216/0.077 & 0.155/0.045 & 1.219/0.706 & 85.532/13.034    & 89.602/18.831    & 87.646/20.069    \\
		2000,200,0.50 & 0.208/0.092 & 0.238/0.060 & 1.050/1.472 & 349.917/75.253   & 348.767/75.847   & 350.218/76.554   \\
		2000,200,1.00 & 0.651/0.348 & 0.447/0.175 & 4.683/6.211 
% 		& 2085.286/345.915 & 1974.415/361.106 & 1990.260/350.165 \\
& 2085.29/345.92 & 1974.42/361.11 & 1990.26/350.17 \\
		 		\hline
	\end{tabular}
	\caption{Local Searches for ah-symmetric generalized inverse (Mean/Std Dev) (Medium) }\label{tab:2P3}
\end{table}	

Comparing to the tests with the `Small' instances,  we note that we  still have  solutions of similar quality obtained by the three local searches based on the determinant, for this larger group of instances of higher dimension.
%{\color{red}even more clear} 
 The average relative difference between the norms of the solutions obtained by these  searches and  the minimum norms    approximately goes from 3 to 10\%,
%{\color{red} percentage improvements in the 1-norm of $H$ that approximately goes from  80 to 99\%}, 
increasing with  the rank and the dimension. 
%Higher  values of these parameters give more room for improvement when applying the searches based on the 1-norm. 
%{\color{red} We also can see the standard deviation is always less than 1\% of the mean value, and decreases as the rank increases, showing the robustness of the search procedures, which increases as they have more space for movements.}
We also see that the  improvement on the solutions found by the local searches based on the 1-norm of $H$, when compared to the determinant searches comes once more with  a  high computational cost.

% {\color{red} Comparing to the tests with the `Small' instances,  we see that on this larger group of instances of greater dimension,  the similarity  among the quality of the results given by the three local searches based on the determinant is {\color{blue} still present}.
% %even more clear. 
% The searches obtain percentage improvements in the 1-norm of $H$ that approximately go from  60 to 91\%, increasing with  all of the parameters, dimension, rank, and density. The greater are the values of these parameters, the larger is the space for modifications of the initial matrix. We also can see that the standard deviation is always less than 1\% of the mean value, and decreases as the rank increases, showing the robustness of the search procedures, which improves  as they have more space for movements.
% Finally, we see that the local searches based on the 1-norm of $H$ lead to a small increase in the improvement already obtained by the determinant searches, but at a  high computational cost.}

In Table \ref{tab:3P3}, we present the number of swaps for each local search. Combining these results with the running time of the procedures, 
%we see that the BI(det) search  perform  fewer swaps than the other determinant searches, but at a higher computational cost.
%The relatively small number of swaps of the norm searches, and the small decrease in the norm compared to the determinant searches confirm that the determinant searches are an effective way of constructing a good generalized inverse.}
%Comparing the three first procedures
we conclude that  the BI(det) procedure could be discarded. Comparing it to the other determinant searches, we see again that, although it converges to solutions of similar quality  performing fewer swaps, it is much more time consuming. We can also observe the high cost of the swaps performed by the searches based on the 1-norm.
% {\color{red}In Table \ref{tab:3P3}, we present the number of swaps for each local search. Combining these results with the running time of the procedures, we see that the norm searches perform fewer swaps than the determinant searches, but at a higher computational cost.
% The relatively small number of swaps of the norm searches, and the small decrease in the norm compared to the determinant searches confirm that the determinant searches are an effective way of constructing the generalized inverse. Comparing the three procedures, the BI(det) could be discarded. Although it converges to solutions of similar quality, it is much more time consuming.
 This observation is illustrated in Figure  \ref{lscompP3}, where we show the relation between the average relative 1-norm difference  to the minimum norm obtained by all searches  and the average running time of the local searches for the larger instances in the `Medium'  category. The hollow circle indicates  that the local search had worse average result and longer average running time than another procedure, and therefore, should not be adopted. We note that we use a logarithmic scale for the running time. Once more we see some improvement given by the norm searches, but with a very high computational cost.
% \begin{table}[!ht]
% \centering
% \tiny
% {\color{red}\begin{tabular}{r|rrrrrr}
% \hline
%  \multicolumn{1}{c|}{$m,r,d$}& \multicolumn{1}{|c}{FI(det)}   & \multicolumn{1}{c}{FI$^+$(det)} & \multicolumn{1}{c}{BI(det)} & \multicolumn{1}{c}{FI(det)}   & \multicolumn{1}{c}{FI$^+$(det)} & \multicolumn{1}{c}{BI(det)} \\
%  &   & &  & \multicolumn{1}{c}{FI(norm)}   & \multicolumn{1}{c}{FI(norm)} & \multicolumn{1}{c}{FI(norm)} \\
% \hline
% 1000,050,0.25   & 53.333&	49.767&	25.100&	6.467	  &         6.500&	6.400\\
% 1000,100,0.25  & 94.133&	82.133&	45.133&	14.967&	14.500&	14.333\\
% 2000,100,0.25    & 120.633&	110.200&	52.667&	16.400&	15.967&	15.833\\
% 2000,200,0.25   &212.233&	181.733&	95.633&	35.333&	35.533&	34.533\\
% 1000,050,0.50 &  61.867&	55.833&	26.900&	7.400	&         7.333	&7.600\\
% 1000,100,0.50 & 113.767&	95.733&	49.933&	17.867&	17.867&	17.333\\
% 2000,100,0.50 & 133.500&	119.067&	55.567&	19.467&	19.400&	19.067\\
% 2000,200,0.50 & 255.067&	210.933&	103.233&	44.967&	44.500&	43.800\\
% 1000,050,1.00  & 97.733&	82.933&	33.433&	10.367&	10.633&	10.100\\
% 1000,100,1.00 & 185.800&	140.467&	61.667&	24.533&	24.167&	25.567\\
% 2000,100,1.00 & 196.600&	169.600&	65.733&	23.367&	24.033&	22.533\\
% 2000,200,1.00 &401.100&	295.433&	126.333&	51.367&	50.767&	50.500\\
% \hline
% \end{tabular}
% \caption{Number of swaps (Medium) (ah-symmetric generalized inverse) }}%\label{tab:3P3}}
% \end{table}

\begin{table}[!ht]
	\centering
	\tiny
\begin{tabular}{r|rrrrrr}
		\hline
		\multicolumn{1}{c|}{$m,r,d$}& \multicolumn{1}{|c}{FI(det)}   & \multicolumn{1}{c}{FI$^+$(det)} & \multicolumn{1}{c}{BI(det)} & \multicolumn{1}{c}{FI(det)}   & \multicolumn{1}{c}{FI$^+$(det)} & \multicolumn{1}{c}{BI(det)} \\
		&   & &  & \multicolumn{1}{c}{FI(norm)}   & \multicolumn{1}{c}{FI(norm)} & \multicolumn{1}{c}{FI(norm)} \\
		\hline
		1000,050,0.25 & 4.133   & 3.667   & 2.833  & 6.267  & 6.267  & 6.267  \\
		1000,050,0.50 & 8.700   & 6.800   & 5.067  & 14.733 & 14.733 & 14.700 \\
		1000,050,1.00 & 5.633   & 4.967   & 2.833  & 16.200 & 16.100 & 16.100 \\
		1000,100,0.25 & 15.033  & 12.033  & 9.133  & 35.367 & 35.233 & 35.167 \\
		1000,100,0.50 & 4.867   & 4.233   & 2.833  & 7.500  & 7.500  & 7.500  \\
		1000,100,1.00 & 18.833  & 15.533  & 11.967 & 17.633 & 17.900 & 17.867 \\
		2000,100,0.25 & 5.433   & 4.900   & 3.100  & 19.767 & 19.767 & 19.667 \\
		2000,100,0.50 & 31.500  & 27.333  & 21.700 & 44.067 & 43.800 & 43.833 \\
		2000,100,1.00 & 113.267 & 70.700  & 24.867 & 10.067 & 10.033 & 9.600  \\
		2000,200,0.25 & 177.067 & 100.800 & 48.867 & 23.767 & 24.533 & 24.467 \\
		2000,200,0.50 & 99.867  & 56.567  & 19.833 & 23.600 & 23.400 & 22.933 \\
		2000,200,1.00 & 134.500 & 76.933  & 41.267 & 52.333 & 52.400 & 51.900 \\
		\hline
	\end{tabular}
	\caption{Number of swaps (Medium) (ah-symmetric generalized inverse) }\label{tab:3P3}
\end{table}

\begin{figure}[!ht]
\centering
\includegraphics[width=\textwidth]{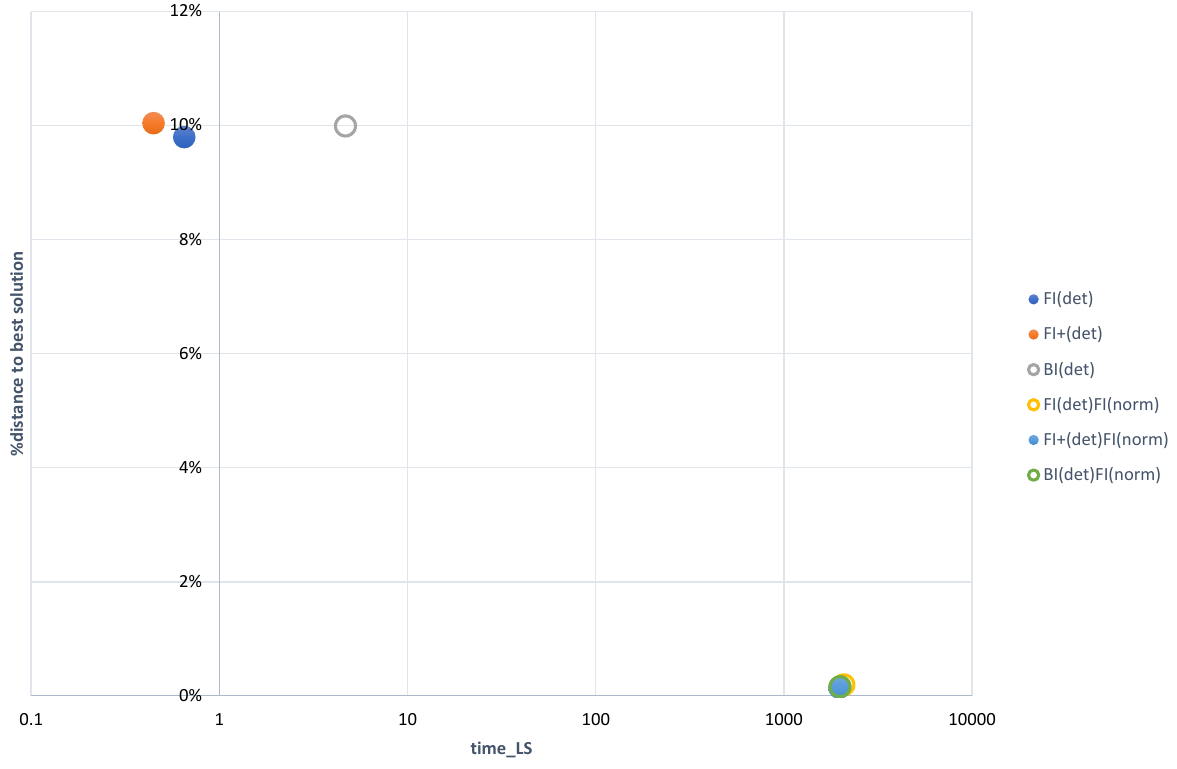}%{}
\caption{ Local searches ($m=2000$,$r=200$,$d=1$) (ah-symmetric generalized inverse)}\label{lscompP3}
\end{figure}

Finally, our last experiments intend to show the scalability of the procedures, considering matrices with up to 10000 rows, 1000 columns, rank up to 100, and density equal to one. As the BI(det) procedure, was not successful in the previous test, we did not run it on the `Large'  category. In Table \ref{tab:5P3}, we see that the average
 relative differences between  the 1-norm of the matrix $H$ obtained by each local search based on the determinant and the minimum value  among all the four local searches 
are approximately between 3 and 6\%. Comparing to best solutions found, the searches based on the determinant keep the quality observed on the smaller instances. Furthermore, theses matrices are obtained in less than 2.5 seconds on average, even for the largest instances.

\begin{table}[!ht]
	\centering
	\tiny
\begin{tabular}{r|rrrrr}
		\hline
		& \multicolumn{5}{|c}{$(\|H\|_1 - \|H_{best}\|_1)/\|H_{best}\|_1$ } \\
		\multicolumn{1}{c|}{$m,n,r$}   &\multicolumn{1}{|c}{FI(det)}   & \multicolumn{1}{c}{FI$^+$(det)} &\multicolumn{1}{c}{FI(det)}   & \multicolumn{1}{c}{FI$^+$(det)} &\\
		&   & &\multicolumn{1}{c}{FI(norm)}   & \multicolumn{1}{c}{FI(norm)} &\\
		\hline
		5000,1000,050  & 0.053 & 0.045 & 0.020 & 0.021 \\
		5000,1000,100  & 0.034 & 0.040 & 0.011 & 0.024 \\
		10000,1000,050 & 0.045 & 0.059 & 0.000 & 0.025 \\
		10000,1000,100 & 0.030 & 0.028 & 0.012 & 0.005 \\
		\hline
		&\multicolumn{5}{|c}{ Time(sec) }\\
		\multicolumn{1}{c|}{$m,n,r$}   &\multicolumn{1}{|c}{FI(det)}   & \multicolumn{1}{c}{FI$^+$(det)} &\multicolumn{1}{c}{FI(det)}   & \multicolumn{1}{c}{FI$^+$(det)} & \multicolumn{1}{c}{NSub}\\
		&   & &\multicolumn{1}{c}{FI(norm)}   & \multicolumn{1}{c}{FI(norm)} &\\
		\hline
		5000,1000,050  & 0.100 & 0.109 & 221.678 & 157.442  & 0.641 \\
		5000,1000,100  & 0.252 & 0.291 & 534.947 & 395.332  & 1.139 \\
		10000,1000,050 & 0.084 & 0.090 & 424.338 & 307.216  & 1.897 \\
		10000,1000,100 & 0.317 & 0.343 & 987.229 & 1145.040 & 1.942 \\
		\hline
	\end{tabular}
	\caption{Local searches (Large) (ah-symmetric generalized inverse)}\label{tab:5P3}
\end{table}

\section{Symmetric generalized inverse}\label{sec:symginv}

Now we assume that $A$ is symmetric, and we are interested in finding
a good symmetric reflexive generalized inverse.
The local-search procedures for the symmetric reflexive  generalized inverse  are based on the block construction procedure presented in \cite{XuFampaLee}. More specifically, on   Theorem \ref{thm:symconstruction}, Definition \ref{defsymconstruction}, and Theorem \ref{thm:symapprox}, presented next.

\begin{theorem}[\cite{XuFampaLee}]\label{thm:symconstruction}
For a symmetric matrix $A\in\mathbb{R}^{n\times n}$, let $r := \rank(A)$. Let $\tilde{A}:=A[S]$ be any $r \times r$
non-singular principal submatrix of $A$. Let $H\in\mathbb{R}^{n\times n}$ be equal to zero, except its submatrix with row/column indices $S$ is equal to $\tilde{A}^{-1}$. Then $H$ is a symmetric reflexive generalized inverse of $A$.
\end{theorem}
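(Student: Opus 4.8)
The plan is to deduce this directly from Theorem~\ref{rbyrsol}, supplemented by a short observation about symmetry. First I would record that a principal submatrix of a symmetric matrix is itself symmetric and non-singular here: writing $S=\{s_1,\dots,s_r\}$, the $(i,j)$ entry of $\tilde{A}:=A[S]$ is $a_{s_i s_j}=a_{s_j s_i}$, which is its $(j,i)$ entry, so $\tilde{A}^\top=\tilde{A}$. Consequently $\tilde{A}^{-1}$ is symmetric as well, since $(\tilde{A}^{-1})^\top=(\tilde{A}^\top)^{-1}=\tilde{A}^{-1}$.

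Next I would observe that the matrix $H$ in the statement is precisely the matrix produced by Theorem~\ref{rbyrsol} when the ``any $r\times r$ non-singular submatrix'' there is taken to be the principal submatrix $A[S]$ (row indices $S$, column indices $S$): $H$ equals $\tilde{A}^{-1}$ in the block indexed by $(S,S)$ and is zero everywhere else, which is exactly the prescription of Theorem~\ref{rbyrsol} applied to this choice. Hence Theorem~\ref{rbyrsol} immediately yields $AHA=A$ and $HAH=H$, i.e.\ $H$ is a reflexive generalized inverse of $A$.

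It then remains to verify that $H$ is symmetric. Since $H$ vanishes outside the block $(S,S)$, so does $H^\top$, and on that block $(H^\top)[S,S]=(H[S,S])^\top=(\tilde{A}^{-1})^\top=\tilde{A}^{-1}=H[S,S]$ by the first step. Therefore $H^\top=H$, and combining with the previous paragraph, $H$ is a symmetric reflexive generalized inverse of $A$, as claimed.

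I do not expect a genuine obstacle: the only content beyond Theorem~\ref{rbyrsol} is that symmetry of $A$ passes to the chosen principal submatrix and to its inverse, and that the off-block zero pattern of $H$ is preserved under transposition. If a self-contained argument were preferred, one could instead prove $AHA=A$ and $HAH=H$ from scratch by the same rank/block reasoning underlying Theorem~\ref{rbyrsol} (using $\rank(A[S])=r=\rank(A)$ to express the rows and columns of $A$ indexed outside $S$ as linear combinations of those indexed by $S$), but routing through Theorem~\ref{rbyrsol} is cleaner and avoids repeating that computation.
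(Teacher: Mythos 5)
Your argument is correct. The paper states Theorem~\ref{thm:symconstruction} without proof (it is quoted from \cite{XuFampaLee}), so there is no in-paper proof to compare against; but your reduction is sound: taking $\sigma=\tau=S$ in Theorem~\ref{rbyrsol} produces exactly the matrix $H$ of the statement, so \ref{property1} and \ref{property2} follow immediately, and the remaining content --- that a principal submatrix of a symmetric matrix is symmetric, that inversion preserves symmetry, and that a matrix vanishing off a diagonal block $(S,S)$ with a symmetric block there is itself symmetric --- is exactly what you verify. This is the natural route given the surrounding results, and nothing further is needed.
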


\begin{definition}[\cite{XuFampaLee}]\label{defsymconstruction}
Let $A$ be an arbitrary $n\times n$, rank-$r$ matrix. For $S$ an ordered subset of $r$ elements from $\{1,\dots,n\}$ and fixed $\epsilon\ge0$, if $|\det(A[S])|>0$ cannot be increased by a factor of more than $1+\epsilon$ by swapping an element of $S$ with one from its complement, then we say that $A[S]$ is a $(1+\epsilon)$-local maximizer for the absolute determinant on the set of $r\times r$ non-singular principal submatrices of $A$.
\end{definition}

\begin{theorem}[\cite{XuFampaLee}]\label{thm:symapprox}
For a symmetric matrix $A\in\mathbb{R}^{n\times n}$, let $r:=\mathrm{rank}(A)$. Choose $\epsilon\ge0$, and let $\tilde{A}:=A[S]$ be a $(1+\epsilon)$-local maximizer for the absolute determinant on the set of $r\times r$ non-singular principal submatrices of $A$. The $n\times n$ matrix $H$ constructed by Theorem \ref{thm:symconstruction} over $\tilde{A}$, is a symmetric reflexive generalized inverse (having at most $r^2$ non-zeros), satisfying $\|H\|_1\le r^2(1+\epsilon)\|H_{ opt}^{sym,r}\|_1$, where $H_{ opt}^{sym,r}$ is a $1$-norm minimizing symmetric reflexive generalized inverse of $A$.
\end{theorem}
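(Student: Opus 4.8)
The plan is to mirror the structure of the proof of Theorem \ref{thm:approx} for ordinary generalized inverses, exploiting the symmetric analogue of the column/row construction. First I would set up notation: let $S$ be the index set realizing the $(1+\epsilon)$-local maximizer, write $\tilde A = A[S]$, and let $H$ be the matrix constructed by Theorem \ref{thm:symconstruction}, so that $H$ is symmetric, reflexive, and has its only nonzero entries in the principal block indexed by $S$, equal to $\tilde A^{-1}$. In particular $\|H\|_1 = \|\tilde A^{-1}\|_1 = \sum_{i,j\in S}|(\tilde A^{-1})_{ij}|$, and there are at most $r^2$ of these. The content of the bound is therefore entirely about controlling $\|\tilde A^{-1}\|_1$ against $\|H_{opt}^{sym,r}\|_1$.

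The key step is a per-entry estimate via Cramer's rule. For $i,j\in S$, write $(\tilde A^{-1})_{ij} = \pm\det(\tilde A[S\setminus\{i\}\,,\,S\setminus\{j\}])/\det(\tilde A)$. The numerator is (up to sign) the determinant of an $(r-1)\times(r-1)$ submatrix of $A$; the crucial observation is that this submatrix can be extended to an $r\times r$ submatrix of $A$ whose determinant is, by the local-maximizer property, at most $(1+\epsilon)|\det(\tilde A)|$ in absolute value — because swapping row $j$ for row $i$ (a single row swap of the principal block, staying within rows/columns that keep the submatrix among the admissible ones) cannot increase the absolute determinant by more than a factor $1+\epsilon$. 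Hence each $|(\tilde A^{-1})_{ij}| \le (1+\epsilon)\cdot\big(\text{something controlled by }A\big)$; then I would connect this "something" to an entry of any feasible symmetric reflexive generalized inverse, in particular $H_{opt}^{sym,r}$, using the fact (Theorem 3.14 of \cite{RohdeThesis}) that such a matrix has rank exactly $r$ and its row space and column space are pinned down by $A$. Summing over the at most $r^2$ entries then yields the factor $r^2(1+\epsilon)$. Note the bound here carries only one factor of $(1+\epsilon)$, not $(1+\epsilon)^2$ as in Theorem \ref{thm:approx}, reflecting that in the symmetric principal-submatrix setting rows and columns are swapped together, so only a single "swap move" is available rather than independent row and column moves.

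The main obstacle I anticipate is the step linking $|\det(\tilde A)|^{-1}$ times a minor of $A$ to an entry of an arbitrary competitor $H_{opt}^{sym,r}$ — i.e., showing that $|\det(\tilde A)|$ is, up to the factor $(1+\epsilon)$ coming from local optimality among \emph{principal} submatrices, essentially the largest absolute determinant of \emph{any} $r\times r$ principal submatrix, and then relating that maximal principal minor to $\|H_{opt}^{sym,r}\|_{\max}$ and thence to $\|H_{opt}^{sym,r}\|_1$. One has to be careful that the local-search neighborhood in Definition \ref{defsymconstruction} only allows swapping a single index of $S$ at a time (so only principal submatrices differing in one index are compared), and that the minor appearing in Cramer's rule for $(\tilde A^{-1})_{ij}$ with $i\ne j$ is \emph{not} itself principal — so one must argue it is dominated by a principal one after a single admissible swap, which is exactly where the hypothesis of being a $(1+\epsilon)$-local maximizer is consumed. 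Making this domination precise, and checking the $i=j$ (diagonal) case separately where the relevant minor \emph{is} principal, is the technical heart; once that is in hand, the summation and the counting of nonzeros are routine, and symmetry and reflexivity of $H$ are immediate from Theorem \ref{thm:symconstruction}.
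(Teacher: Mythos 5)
First, a point of reference: this paper does not prove Theorem \ref{thm:symapprox} at all --- it is quoted from \cite{XuFampaLee} and used as a black box --- so there is no in-paper proof to compare against. Judged on its own terms, your outline has a genuine gap at exactly the point you flag as ``the technical heart.'' Your plan is to bound each $|(\tilde A^{-1})_{ij}|=|\det(\tilde A[S\setminus\{i\},S\setminus\{j\}])|/|\det(\tilde A)|$ by invoking local maximality, but local maximality over \emph{principal} submatrices gives no control over this ratio: the neighborhood in Definition \ref{defsymconstruction} compares $|\det(A[S])|$ only with $|\det(A[S'])|$ for $S'$ differing from $S$ in one index, while the off-diagonal cofactor $\det(\tilde A[S\setminus\{i\},S\setminus\{j\}])$ is an $(r-1)\times(r-1)$ non-principal minor not reachable by any admissible swap; indeed the ratio in question \emph{is} the entry you are trying to bound, and it can be arbitrarily large even at a global maximizer (take $\tilde A$ nearly singular). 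More fundamentally, the competitor $H_{opt}^{sym,r}$ never actually enters your argument except through a vague appeal to rank and column spaces, so no inequality of the form $\|H\|_1\le c\,\|H_{opt}^{sym,r}\|_1$ can emerge from the steps you describe.

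The mechanism that makes the theorem work is different. Writing (after a symmetric permutation) $A=\left(\begin{smallmatrix}\tilde A&\tilde AW\\ W^\top\tilde A&W^\top\tilde AW\end{smallmatrix}\right)$, which is possible because $\mathrm{rank}(A)=r$ and $A$ is symmetric, the local-maximizer hypothesis translates, via the identity $\det(A_{\gamma/j})=\hat\alpha_j^2\det(A)$ of Remark \ref{remcolsym}, into the entrywise bound $\|W\|_{\max}\le\sqrt{1+\epsilon}$ --- note the square root: a single principal swap scales the determinant by $\hat\alpha_j^2$, which is the precise reason the symmetric bound carries $(1+\epsilon)$ rather than the $(1+\epsilon)^2$ of Theorem \ref{thm:approx} (your heuristic about ``one swap move instead of two'' points in the right direction but is not the actual accounting). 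Then for \emph{any} generalized inverse $G$ of $A$, restricting $AGA=A$ to the rows and columns indexed by $S$ gives $\tilde A\,(I\;\; W)\,G\,(I\;\; W)^\top\tilde A=\tilde A$, hence $(I\;\; W)\,G\,(I\;\; W)^\top=\tilde A^{-1}$; expanding in blocks and using $\|WX\|_1\le r\,\|W\|_{\max}\|X\|_1$ yields $\|H\|_1=\|\tilde A^{-1}\|_1\le r^2(1+\epsilon)\|G\|_1$, and taking $G=H_{opt}^{sym,r}$ finishes. This identity linking $\tilde A^{-1}$ to an arbitrary competitor is the step your outline is missing; without it the per-entry Cramer computation has nothing to be compared against. (Your remarks on symmetry, reflexivity, and the count of at most $r^2$ nonzeros are fine and indeed immediate from Theorem \ref{thm:symconstruction}.)
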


%%%%%%%%%%%%%%%%%

The idea of our algorithms in this section is then to select an $r\times r$  non-singular \emph{principal} submatrix $\tilde{A}$ of $A$, and construct the symmetric  reflexive generalized inverse with the inverse of this submatrix, as described in Theorem \ref{thm:symconstruction}.
The non-zero entries of $H$ will be the non-zero entries of $\tilde{A}^{-1}$. Guided by the result in Theorem \ref{thm:symapprox}, the `det' searches aim at selecting a principal submatrix $\tilde{A}$ that is a local maximizer
for the absolute determinant on the set of $r\times r$ non-singular principal submatrices of $A$.  We also apply `norm' searches, as done in the two previous sections.

In the following, we discuss how the  symmetric test matrices $A$ used in our computational experiments were generated, how we select the initial principal submatrix of $A$ to initialize the local searches, we describe the algorithms, and we present numerical results.

To analyze the local-search procedures, we  compare their solutions to the solution of LP problem  $P_{1}^{sym}$. Its solution corresponds to  $\| H_{opt}^{sym} \|_1$,  where $H_{opt}^{sym}$ is a $1$-norm minimizing symmetric generalized inverse of $A$.

\[
\begin{array}{lll}
(P_{1}^{sym}) \  z_{P_{1}^{sym}}:=&\min  & \left\langle J , T \right\rangle~,  \\
& \mbox{s.t.:}    &T - H \geq 0~,\\
		     &&T + H \geq 0~,\\
		     &&AHA=A~,\\
                  && H=H^{\top}~.
\end{array}
\]

We note that the result in Theorem \ref{thm:symapprox} is not related to $H_{opt}^{sym}$, but to $H_{opt}^{sym,r}$, a $1$-norm minimizing symmetric reflexive generalized inverse of $A$. However, from the proof  in \cite{XuFampaLee}, we can conclude  that the result is still valid if we replace $H_{opt}^{sym,r}$ by $H_{opt}^{sym}$ in the theorem. The reason of computing the second in our experiments is that, unlike the first, it can be efficiently  computed by the solution of an  LP problem, specifically $P_1^{sym}$.

\subsection{Our test matrices}
To test the local-search procedures proposed in this section, we  randomly generated 360 symmetric  matrices $A$ with varied dimensions,  ranks, and  densities.

The matrices were generated with the Matlab function \emph{sprandsym}.
 The function generates a random  $m\times m$ dimensional symmetric matrix $A$  with
  approximate density $d$ and eigenvalues
$rc$. The eigenvalues of $A$ are given as the input vector $rc$,.
The number of non-zero elements of $rc$
is of course the desired rank $r$.
For our experiments, we selected the $r$ nonzeros of $rc$ as before,
 $M\times(\pm\rho^{1},\pm\rho^{2},\ldots,\pm\rho^{r})$, where $M:=2$, and $\rho:=(1/M)^{(2/(r+1))}$, and
  the signs were randomly selected.

We divide our instances into the following three categories:
\begin{itemize}
\item Small: 90 instances. 5 with  each of the 18  combinations of the following parameters:  $m=n=50,80,100$; $r=0.1\times n,0.5\times n$;  $d = 0.25, 0.50, 1.00$.
\item Medium: 360 instances. 30 with  each of the 12 combinations of the following parameters:  $m=n=1000,2000$; $r=0.05\times n,0.1\times n$;  $d = 0.25, 0.50, 1.00$.
\end{itemize}

As the matrices are symmetric, we do not have the category of `Large' instances as in the previous  sections, where only $m$ was selected larger.

\subsection{Selecting an initial block for the local search}\label{secinitsym}

To construct an $r\times r$ non-singular principal submatrix of $A$ to initialize the local searches when $A$ is symmetric, we consider the following result.

\begin{proposition}
\label{propsym}
Let $A$ be a symmetric $m\times m$ matrix with rank $r$. Suppose that the $r$ columns of $A$ indexed by   $j_1,j_2,\ldots j_r$ are linear independent.  Then the principal submatrix $A[j_1,j_2,\ldots,j_r]$  has rank $r$.
\end{proposition}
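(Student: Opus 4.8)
The plan is to use the symmetry of $A$ to relate the row space and column space, and then argue that the principal submatrix indexed by a maximal independent set of columns inherits full rank. Write $T:=\{j_1,\dots,j_r\}$, so that the hypothesis says $\operatorname{rank}(A[:,T])=r$. Since $A=A^\top$, the columns of $A$ indexed by $T$ are the transposes of the rows of $A$ indexed by $T$; hence the rows of $A$ indexed by $T$ are also linearly independent, i.e.\ $\operatorname{rank}(A[T,:])=r$. Because $\operatorname{rank}(A)=r$, the row space of $A$ is $r$-dimensional, so the $r$ independent rows $A[T,:]$ form a basis of the row space of $A$; in particular every row of $A$ is a linear combination of the rows in $A[T,:]$.

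Next I would exploit this to deduce that the column space of $A$ is spanned by $A[:,T]$ (which we already know has rank $r$), and crucially that no nontrivial combination of the columns in $T$ can vanish when restricted to the rows in $T$. Concretely, suppose $A[T,T]\,x=0$ for some $x\in\mathbb{R}^r$, i.e.\ the vector $A[:,T]\,x$ has zero entries in the positions indexed by $T$. The vector $y:=A[:,T]\,x$ lies in the column space of $A$. But the column space of a symmetric matrix equals its row space, and we have just shown that $A[T,:]$ is a basis of the row space; equivalently, a vector $y$ in the column space of $A$ is determined by its coordinates indexed by $T$ (since restricting to those coordinates is an injective linear map from the $r$-dimensional column space — its kernel would be a vector in the column space orthogonal in coordinates to... ). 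So I need to make precise that the coordinate-restriction map $\pi_T:\operatorname{col}(A)\to\mathbb{R}^r$, $z\mapsto z[T]$, is injective. This follows because $\operatorname{col}(A)=\operatorname{row}(A)$ and $\{A[T,:]^\top\}$ spans it: if $z=A[T,:]^\top w$ and $z[T]=A[T,T]^\top w=A[T,T]w=0$, then by the same injectivity statement applied recursively...

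To avoid circularity, here is the cleaner route I would actually write. Let $B:=A[:,T]\in\mathbb{R}^{m\times r}$, which has rank $r$. Then $A[T,T]=B[T,:]$, the submatrix of $B$ on rows $T$. Since $A$ is symmetric, $A[:,T]=A[T,:]^\top$, so $B=A[T,:]^\top$, and therefore $A[T,T]=B[T,:]=(A[T,:]^\top)[T,:]=(A[T,:][:,T])^\top=A[T,T]^\top$ — consistent, but the key identity is $B^\top B$: we have $B^\top B=A[T,:]A[:,T]=(A A)[T,T]=A^2[T,T]$. Hmm, that brings in $A^2$. The slickest argument: since $\operatorname{rank}(B)=r$ and $\operatorname{col}(B)=\operatorname{col}(A)=\operatorname{row}(A)=\operatorname{row}(A[T,:])=\operatorname{col}(A[T,:]^\top)=\operatorname{col}(B)$, the column space of $B$ is spanned by the columns of $A[T,:]^\top$. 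Now restrict $B$ to its rows in $T$: the map sending a column of $B$ to its $T$-subvector is, on $\operatorname{col}(B)$, the map $\operatorname{col}(B)\ni A[T,:]^\top w\mapsto A[T,T]^\top w$. I expect the main obstacle to be exactly this: showing this restriction map is injective on $\operatorname{col}(B)$, equivalently that $\ker(A[T,T])\cap$ (coordinates of $\operatorname{col}(B)$) is trivial. I would handle it by the standard fact that if $C$ is any $m\times r$ matrix of rank $r$ whose column space is invariant under... — more simply, I would invoke that for a symmetric $A$, $\operatorname{rank}(A[S])=\operatorname{rank}(A[:,S])$ whenever $A[:,S]$ has full column rank, which is a known linear-algebra fact (the principal submatrix on a column basis is nonsingular for symmetric — indeed for any matrix with a symmetric, or more generally, when column space and row space coincide). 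In the write-up I would prove the injectivity directly: if $A[T,T]x=0$, consider $v\in\mathbb{R}^m$ with $v[T]=x$ and $v[\,\overline{T}\,]=0$; then $(Av)[T]=A[T,T]x=0$, and I claim $Av=0$. Indeed $v\in\operatorname{row}(A)^{\perp}$? Not obviously. Instead: $v^\top A v = v[T]^\top A[T,T] v[T]=x^\top A[T,T]x=0$ is not enough since $A$ need not be definite. So the honest main obstacle is pushing through the rank argument without definiteness; I would resolve it by the basis exchange: the columns of $A[:,T]$ are a basis of $\operatorname{col}(A)$, hence $A=A[:,T]\,W$ for some $W\in\mathbb{R}^{r\times n}$ with $W[:,T]=I_r$; transposing and using symmetry, $A=W^\top A[T,:]$, so $A[T,:]=W[:,T]^\top A[T,:]$... and restricting columns to $T$: $A[T,T]=A[:,T][T,:]=(A[:,T]W)[T,T]$ gives $A[T,T]=A[T,T]\,W[:,T]+ (\text{cross terms})$ — this should collapse to show $A[T,T]$ is a product of two rank-$r$ factors, hence rank $r$. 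The cleanest statement I will ultimately use: write $A=U\Sigma U^\top$... no. I'll go with: since $\operatorname{rank} A=r=\operatorname{rank} A[:,T]$, there is $W$ with $A=A[:,T]W$; by symmetry $A=A[:,T]W=(A[:,T]W)^\top=W^\top A[:,T]^\top=W^\top A[T,:]$; then $A[T,T]=(W^\top A[T,:])[:,T]=W^\top A[T,T]$ gives... finally $A[:,T]=A[T,:]^\top$ and both have rank $r$, and $A[T,T]$ is the $T$-restriction of the full-rank $A[:,T]$ whose columns span $\operatorname{col}(A)=\operatorname{row}(A)=\operatorname{span}(\text{rows }A[T,:])$, so the restriction-to-$T$ map is a bijection on this space, whence $A[T,T]$ has rank $r$. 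I'll write that last chain carefully, as it is where all the content sits.
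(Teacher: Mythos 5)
Your underlying idea is the right one and, once cleaned up, coincides with the paper's proof; but the chain you promise to ``write carefully'' at the end is exactly where the argument currently breaks. Two specific problems. First, the identity $A[T,T]=(W^\top A[T,:])[:,T]=W^\top A[T,T]$ is dimensionally inconsistent: from $A=W^\top A[T,:]$, restricting to the columns indexed by $T$ gives the $m\times r$ matrix $A[:,T]=W^\top A[T,T]$, not the $r\times r$ matrix $A[T,T]$. Second, your closing sentence replaces the missing computation with the assertion that the coordinate-restriction map $z\mapsto z[T]$ is a bijection on $\mathrm{col}(A)$; but with respect to the basis of $\mathrm{col}(A)$ given by the columns of $A[:,T]$, that map is represented precisely by $A[T,T]$, so its bijectivity is equivalent to the conclusion you are trying to prove --- this is the circularity you flagged yourself earlier in the proposal and never discharged.

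The repair is one line, and you already have every ingredient. Since the columns indexed by $T$ form a basis of $\mathrm{col}(A)$, write $A=A[:,T]\,W$ with $W[:,T]=I_r$; transposing and using $A=A^\top$ gives $A=W^\top A[T,:]$, hence $A[:,T]=W^\top A[T,T]$ and therefore $r=\rank(A[:,T])\le\rank(A[T,T])\le r$. (This also yields the injectivity you wanted without circularity: if $z=Av\in\mathrm{col}(A)$ satisfies $z[T]=A[T,:]v=0$, then $z=W^\top A[T,:]v=0$.) The corrected argument is, up to notation, identical to the paper's: there one writes $A$ in block form with $\hat{A}:=A[T,T]$, uses the basis property to get $B=\hat{A}X$ and $D=B^\top X$, and concludes from $\bigl(\begin{smallmatrix}\hat{A}\\ B^\top\end{smallmatrix}\bigr)=\bigl(\begin{smallmatrix}I\\ X^\top\end{smallmatrix}\bigr)\hat{A}$ that $r\le\rank(\hat{A})$. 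Your $W^\top$ is the paper's $\bigl(\begin{smallmatrix}I\\ X^\top\end{smallmatrix}\bigr)$, so nothing is gained or lost by the change of notation; the only substantive difference is that the paper actually carries the factorization through to the rank inequality, which is the step your write-up currently skips.
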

\begin{proof}
Without loss of generality, we assume that $j_1=1,j_2=2,\ldots, j_r=r$, and
 \[
A =\left(\begin{array}{ll} \hat{A} &B\\ B^\top & D\end{array}\right),
\]
with $\hat{A}$ being an $r\times r$ symmetric submatrix. Then
\[
\mbox{rank}\left(\begin{array}{l} \hat{A} \\ B^\top \end{array}\right) = r.
\]
This implies that there exists an $r\times(m-r)$ matrix $X$, such that   $B = \hat{A} X$, $D = B^\top X$, as the $r$ first columns of $A$ form a basis for the column space of  $A$. Therefore,
\[
r=\mbox{rank}\left(\begin{array}{l} \hat{A} \\ B^\top \end{array}\right) = \mbox{rank}\left(\left(\begin{array}{c} I\\ X^\top \end{array}\right) \cdot \hat{A}\right) \leq \mbox{rank}(\hat{A}),
\]
 which implies that rank$(\hat{A})=r$.
 \qed
\end{proof}

Based on Proposition \ref{propsym}, we apply the same ideas described in  Algorithms \ref{AlgPhaseOnerow} and \ref{AlgGreedyrow}, but now to select only the set $S$ of $r$ linear independent rows of $A$. The set of columns $T$ is then selected to be equal to $S$.  
% We iteratively construct a submatrix of $A$, appending a column of $A$ at each iteration, selected as the column that either maximizes the product of the singular values of the partially constructed submatrix (Greedy Algorithm), or keeps the product of singular values greater than
% a tolerance $\tau$ (Greedy Light Algorithm).

\subsection{The local-search procedures}

In Algorithm \ref{AlgLSFIsym}, we present the  `first improvement' local-search procedure  `FI(det)', which considers as the criterion for improvement of the given solution, the increase in the absolute determinant of the  $r\times r$ non-singular principal submatrix of $A$.
Based on Theorem \ref{thm:symapprox}, for a given rank-$r$ matrix $A$,  the procedure starts from a set $S$ of $r$ indices, such that $A[S]$ is non-singular.

\begin{algorithm}[!ht]
	\footnotesize{
		\KwIn{ $A\in \mathbb{R}^{m\times m}$, such that $r:=$rank($A$), $A=A^\top$\\$S\subset M:=\{1,\ldots,m\}$,   such that $|S|=r$, and $A[S]$ is non-singular. }
		\KwOut{ possibly updated set $S$. }		
		%\vspace{0.2cm}
$B:=A[S]$\;
$detB:=det(B)$\;
$\bar{M}:=M\setminus S$\;
$cont = true$\;
\While {($cont$)}
{
$cont=false$\;
\For {$\ell =1,\ldots, m-r$}
{
\For {$j =1,\ldots, r$}
{
$Saux:=S\cup\{\bar{M}(\ell)\}\setminus \{S(j)\}$\;
$B^+ :=A[Saux]$\;
$detB^+:=det(B^+)$\;
\If {$|detB^+| > |detB|$}
{
$B := B^+$\;
$detB := detB^+$\;
$S:=Saux$\;
$\bar{M}:=M\setminus S$\;
$cont=true$\;
break \;
}
}
}
}
\caption{`FI(det)'  for symmetric generalized inverses. \label{AlgLSFIsym} }
}
\end{algorithm}

In the  loop of Algorithm \ref{AlgLSFIsym} (lines 7--18), a column and row of $A[S, S]$ is replaced  if the absolute determinant increases with the replacement. To evaluate how much the determinant changes with the replacement, we consider  the result in Remark \ref{remcolsym}.

\begin{jremark}\label{remcolsym}
Let $\gamma\in \mathbb{R}^n$ and $A\in \mathbb{R}^{n\times n}$ with $\det(A)\neq 0$. Let $A_{\gamma/j}$ be the matrix obtained by replacing the $j^{th}$ column and row of
$A$ by $\gamma$ and $\gamma^\top$, respectively.
If $\hat{\alpha}\in \mathbb{R}^n$ solves the linear system of equations $A\alpha=\gamma$, then we have \[\det(A_{\gamma/j})= \hat{\alpha}_j^2 \times \det(A).\]
The result follows from
\[A_{\gamma/j} = [I +e_j(\hat{\alpha}-e_j)^\top] A [I+(\hat{\alpha}-e_j)e_j^\top].\]
\end{jremark}

%%%

The algorithm for  `FI$^+$(det)' differs from Algorithm \ref{AlgLSFIsym}, concerning the choice of the index  to be replaced in the current set $S$. For the `FI$^+$'  local search, instead of considering the first increase in the absolute determinant  of $A[S]$,   obtained in the loop described in lines 8--18, the algorithm computes the modification in the absolute determinant obtained for each index $j$, and selects the index that leads to the greatest increase. For the algorithm `BI(det)', the pair of indices $(\ell,j)$, in the two loops described in lines 7--18, that leads to the greatest increase in the absolute determinant is considered for the modification in the matrix.

For the `norm' searches, instead of computing the determinant of $B$ and $B^+$ in lines 2 and 11 of Algorithm \ref{AlgLSFIsym}, we compute the 1-norm of their inverses. The update in the index set $S$ occurs when the 1-norm decreases.

%%%%%%%%%%%%%

\subsection{Numerical results}
We initially consider the experiments done with the 90 instances in the `Small' category, which had the main purpose of analyzing the ratios between the
the 1-norm of the matrices $H$ computed by the three local searches based on the determinant, with the minimum 1-norm of a ah-symmetric generalized inverse given by the solution of the LP problem $P_{1}^{sym}$  $(\|H\|_1/z_{P_{1}^{sym}})$. We aim at checking how close these ratios are from the upper bound given by Theorem \ref{thm:symapprox}.

In   Figure \ref{barratplSymdens},  we present the average ratios   for the matrices with the same dimension, rank, and density.  From Theorem \ref{thm:symapprox}, we know that these ratios cannot be greater than $r^2$, and we see from the results, that for the matrices considered in our tests, we stay quite far from this upper bound. In general, the ratios increase with the rank $r$, the dimension $m=n$, and the density $d$  of the matrices,  but even for $r=50$, we  obtain ratios smaller than 2.1.

\begin{figure}[!ht]
\centering
\includegraphics[scale=0.5]{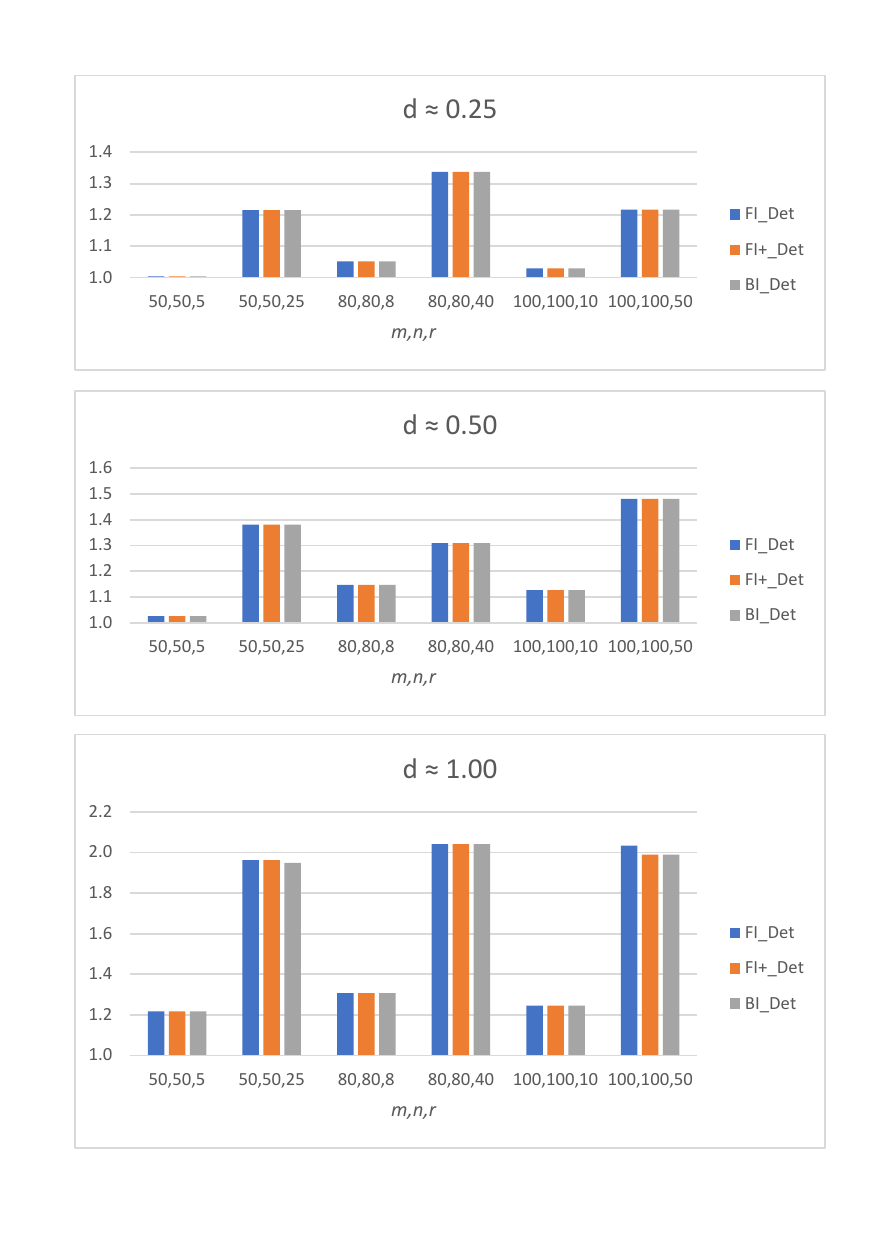}%{}
\caption{ $\|H\|_1/z_{P_1^{sym}}$ (Small) (symmetric generalized inverse)}\label{barratplSymdens}
\end{figure}

In Table \ref{tab:1Sym}, besides presenting the average ratios depicted in Figure \ref{barratplSymdens}, we also  present the average running times to compute the generalized inverses. In case of the local searches, the total time to compute the generalized inverse is given by the sum of the time to generate the initial matrix $H$ by the simplified version of the NSub algorithm (Algorithm \ref{AlgNSub}) discussed in \S\ref{secinitsym},
%Greedy Light (GL) algorithm (Algorithm  \ref{AlgGreedyLight}), 
and the time of the local search (FI(det), FI$^+$(det), or BI(det)).

\begin{table}[!ht]
	\centering
	\tiny
	\begin{tabular}{r|lll|rrlll}
		\hline
		&\multicolumn{3}{|c|}{ $\|H\|_1/z_{P^{sym}_{1}}$}&\multicolumn{5}{|c}{Time(sec)}\\
		\multicolumn{1}{c|}{$m,r,d$}& \multicolumn{1}{|c}{FI(det)}   & \multicolumn{1}{c}{FI$^+$(det)} & \multicolumn{1}{c|}{BI(det)} & \multicolumn{1}{|c}{$P^{sym}_{1}$} &   \multicolumn{1}{c}{NSub} &\multicolumn{1}{c}{FI(det)}   & \multicolumn{1}{c}{FI$^+$(det)} & \multicolumn{1}{c}{BI(det)} \\
		\hline
		50,05,0.25  & 1.003 & 1.003 & 1.003 & 0.623   & 0.031 & 0.004 & 0.004 & 0.003 \\
		50,25,0.25  & 1.215 & 1.215 & 1.215 & 6.238   & 0.004 & 0.028 & 0.025 & 0.028 \\
		80,08,0.25  & 1.051 & 1.051 & 1.051 & 2.041   & 0.029 & 0.007 & 0.004 & 0.004 \\
		80,40,0.25  & 1.338 & 1.338 & 1.338 & 46.916  & 0.027 & 0.134 & 0.115 & 0.140 \\
		100,10,0.25 & 1.029 & 1.029 & 1.029 & 4.132   & 0.020 & 0.008 & 0.007 & 0.009 \\
		100,50,0.25 & 1.218 & 1.218 & 1.218 & 69.780  & 0.038 & 0.222 & 0.196 & 0.174 \\
		50,05,0.50  & 1.027 & 1.027 & 1.027 & 0.641   & 0.003 & 0.003 & 0.003 & 0.003 \\
		50,25,0.50  & 1.380 & 1.380 & 1.380 & 7.511   & 0.003 & 0.034 & 0.028 & 0.054 \\
		80,08,0.50  & 1.148 & 1.148 & 1.148 & 2.785   & 0.005 & 0.005 & 0.004 & 0.004 \\
		80,40,0.50  & 1.309 & 1.309 & 1.309 & 46.378  & 0.027 & 0.125 & 0.095 & 0.108 \\
		100,10,0.50 & 1.127 & 1.127 & 1.127 & 4.704   & 0.005 & 0.010 & 0.008 & 0.010 \\
		100,50,0.50 & 1.480 & 1.480 & 1.480 & 97.951  & 0.022 & 0.256 & 0.235 & 0.186 \\
		50,05,1.00  & 1.218 & 1.218 & 1.218 & 0.908   & 0.003 & 0.003 & 0.002 & 0.003 \\
		50,25,1.00  & 1.963 & 1.963 & 1.950 & 10.777  & 0.003 & 0.041 & 0.034 & 0.048 \\
		80,08,1.00  & 1.307 & 1.307 & 1.307 & 3.628   & 0.004 & 0.006 & 0.004 & 0.005 \\
		80,40,1.00  & 2.042 & 2.042 & 2.042 & 62.370  & 0.008 & 0.200 & 0.170 & 0.386 \\
		100,10,1.00 & 1.246 & 1.246 & 1.246 & 6.318   & 0.006 & 0.012 & 0.010 & 0.015 \\
		100,50,1.00 & 2.034 & 1.991 & 1.991 & 942.775 & 0.016 & 0.315 & 0.298 & 0.398 \\
		\hline
	\end{tabular}
	\caption{Local Searches for symmetric generalized inverse vs. $P^{sym}_{1}$}\label{tab:1Sym}
\end{table}

We see from Figure \ref{barratplSymdens} and  Table \ref{tab:1Sym}, that  the three local searches converge to solutions of similar quality on most of the experiments. 
%In cases where  the ratios have small differences,  there is no clear winner among the three local searches.
We also see in Table \ref{tab:1Sym} that the running times to solve $P_{1}^{sym}$ increase quickly with the dimension of the matrix, and are much higher than the times for the local searches.

Next, we consider the experiments done with the 360 instances in the `Medium' category, which had the main purpose of  comparing the different local searches  proposed.  %{\color{red}Greedy Light algorithm} 
We  present in Table \ref{tab:2Sym} average results for each group of 30 instances with the same configuration, described in the first column. In the next three columns, we present statistics for the local searches based on the determinant, which are initialized  with the solutions given by the NSub  procedure, and in the last three columns we consider the application of the local searches based on the 1-norm of  $H$, which are initialized with  the solutions given by the three first local searches. In the first half of the table,  we show the mean and standard deviation of the %{\color{red} improvements in} 
 relative difference between  the 1-norm of the matrix $H$ obtained by each local search and the minimum value  among all of them, denoted by $||H_{best}||_1$. %$H_{best}$ is always obtained with application of the local searches based on the 1-norm. 

\begin{table}[!ht]
	\centering
	\tiny
\begin{tabular}{l|c|c|c|c|c|c}
		\hline
		\multicolumn{1}{c|}{$m,r,d$}& \multicolumn{1}{|c|}{FI(det)}   & \multicolumn{1}{|c|}{FI$^+$(det)} & \multicolumn{1}{|c|}{BI(det)} & \multicolumn{1}{|c|}{FI(det)}   & \multicolumn{1}{c}{FI$^+$(det)} & \multicolumn{1}{|c}{BI(det)} \\
		&   & & & \multicolumn{1}{|c|}{FI(norm)}   & \multicolumn{1}{c}{FI(norm)} & \multicolumn{1}{|c}{FI(norm)} \\
		\hline
		&\multicolumn{6}{|c}{ $(\|H\|_1 - \|H_{best}\|_1)/\|H_{best}\|_1$ }\vphantom{$\Sigma^{R}$} \\[3pt]
		\hline
		1000,050,0.25 & 0.040/0.037 & 0.040/0.037 & 0.040/0.037 & 0.000/0.000 & 0.000/0.000 & 0.000/0.000 \\
		1000,050,0.50 & 0.098/0.053 & 0.097/0.052 & 0.097/0.052 & 0.000/0.000 & 0.000/0.000 & 0.000/0.000 \\
		1000,050,1.00 & 0.073/0.038 & 0.073/0.038 & 0.073/0.038 & 0.001/0.004 & 0.001/0.004 & 0.000/0.000 \\
		1000,100,0.25 & 0.119/0.039 & 0.120/0.038 & 0.119/0.039 & 0.001/0.002 & 0.001/0.003 & 0.001/0.003 \\
		1000,100,0.50 & 0.077/0.049 & 0.077/0.049 & 0.077/0.049 & 0.000/0.000 & 0.000/0.000 & 0.000/0.000 \\
		1000,100,1.00 & 0.144/0.086 & 0.146/0.085 & 0.146/0.085 & 0.002/0.013 & 0.004/0.014 & 0.001/0.006 \\
		2000,100,0.25 & 0.107/0.049 & 0.107/0.049 & 0.107/0.049 & 0.000/0.000 & 0.000/0.001 & 0.000/0.000 \\
		2000,100,0.50 & 0.203/0.074 & 0.204/0.076 & 0.204/0.076 & 0.000/0.002 & 0.001/0.007 & 0.001/0.008 \\
		2000,100,1.00 & 0.187/0.126 & 0.187/0.126 & 0.187/0.126 & 0.000/0.000 & 0.000/0.000 & 0.000/0.000 \\
		2000,200,0.25 & 0.210/0.117 & 0.207/0.117 & 0.206/0.118 & 0.002/0.008 & 0.000/0.000 & 0.001/0.004 \\
		2000,200,0.50 & 0.218/0.094 & 0.216/0.093 & 0.218/0.094 & 0.001/0.003 & 0.001/0.007 & 0.003/0.009 \\
		2000,200,1.00 & 0.293/0.088 & 0.287/0.090 & 0.287/0.090 & 0.000/0.001 & 0.000/0.000 & 0.000/0.000 \\
		\hline
		&\multicolumn{6}{|c}{ Time(sec) }\\
		\hline
		1000,050,0.25 & 5.404/1.694     & 3.810/0.663    & 4.592/1.348     & 31.492/9.511      & 31.145/8.401      & 31.662/8.791      \\
		1000,050,0.50 & 23.957/6.956    & 17.517/4.616   & 26.754/12.052   & 266.19/71.67    & 268.51/71.74    & 268.53/70.63    \\
		1000,050,1.00 & 69.181/27.258   & 36.835/5.228   & 70.744/21.313   & 520.68/163.62   & 519.80/152.22   & 512.77/145.89   \\
		1000,100,0.25 & 577.53/190.48 & 310.02/44.63 & 734.92/207.46 & 5668.8/1073.3 & 5558.6/974.5  & 5559.5/1019.7 \\
		1000,100,0.50 & 5.320/2.249     & 3.372/0.593    & 4.191/1.303     & 31.881/9.784      & 30.835/8.241      & 31.580/9.405      \\
		1000,100,1.00 & 21.921/9.387    & 15.336/3.281   & 20.791/8.659    & 334.73/76.07    & 334.18/76.32    & 335.50/76.70    \\
		2000,100,0.25 & 66.531/18.351   & 37.912/5.941   & 73.078/27.199   & 667.30/196.24   & 658.59/197.53   & 665.20/199.51   \\
		2000,100,0.50 & 549.92/274.73 & 318.89/85.71 & 527.86/185.11 & 7116.8/1690.3 & 7010.6/1683.7 & 6844.4/1521.1 \\
		2000,100,1.00 & 4.421/2.456     & 3.251/1.033    & 3.923/1.855     & 45.350/10.200     & 45.420/10.154     & 45.121/11.686     \\
		2000,200,0.25 & 19.292/9.230    & 13.978/4.679   & 20.583/15.295   & 437.376/128.407   & 437.66/126.50   & 433.88/112.52   \\
		2000,200,0.50 & 48.824/21.987   & 35.423/9.395   & 41.201/18.144   & 921.35/280.99   & 935.07/310.43   & 935.30/297.93   \\
		2000,200,1.00 & 387.26/180.38 & 275.56/74.36 & 373.22/200.87 & 9525.2/2202.6 & 9675.2/2388.8 & 9645.7/2182.5 \\
		\hline
	\end{tabular}
	\caption{Local Searches for symmetric generalized inverse (Mean/Std Dev) (Medium) }\label{tab:2Sym}
\end{table}

Comparing to the tests with the `Small' instances,  we see that on this larger group of instances of higher dimension,  the similarity  among the quality of the solutions obtained by the three local searches based on the determinant is 
%{\color{red}even more clear} 
 still present. The average relative difference between the norms of the solutions obtained by these  searches and  the minimum norms    goes from  approximately 4 to 30\%, 
%{\color{red} percentage improvements in the 1-norm of $H$ that approximately goes from  80 to 99\%}, 
increasing with  the rank and the dimension. 
%Higher  values of these parameters give more room for improvement when applying the searches based on the 1-norm. 
%{\color{red} We also can see the standard deviation is always less than 1\% of the mean value, and decreases as the rank increases, showing the robustness of the search procedures, which increases as they have more space for movements.}
We also see that the  improvement on the solutions found by the local searches based on the 1-norm of $H$, when compared to the determinant searches comes with  a  high computational cost.  

% Comparing to the tests with the `Small' instances,  we see that on this larger group of instances of higher dimension,  the similarity  among the quality of the results given by the three local searches based on the determinant is even more clear. The searches obtain percentage improvements in the 1-norm of $H$ that approximately go from  82 to 99\%, increasing with  all the parameters, dimension, rank, and density. The greater are the values of these parameters, the larger is the space for modifications of the initial matrix. We also can see that the standard deviation is always less than 1\% of the mean value, and decreases as the rank increases, showing the robustness of the search procedures, which increases  as they have more space for movements.
% Finally, we see that the local searches based on the 1-norm of $H$ lead to a small increase in the improvement already obtained by the determinant searches, but at a relatively high computational cost. 
%Due to the very high computational times for the norm searches, we did not apply them to the instances with rank equal to 200.

In Table \ref{tab:3Sym}, we present the number of swaps for each local search.  Combining these results with the running time of the procedures, 
%we see that the BI(det) search  perform  fewer swaps than the other determinant searches, but at a higher computational cost.
%The relatively small number of swaps of the norm searches, and the small decrease in the norm compared to the determinant searches confirm that the determinant searches are an effective way of constructing a good generalized inverse.}
%Comparing the three first procedures
we conclude that  the FI$^+$(det)  procedure  is the best search based on the determinant, presenting smaller average computational times than the other two. We can also observe a relative increase in the number of swaps when compared to  the non-symmetric cases discussed in the previous sections. This increase is also reflected in the greater improvement obtained with these searches. However, the improvement continues to come with a very  high computational cost.

\begin{table}[!ht]
	\centering
	\tiny
\begin{tabular}{r|rrrrrr}
		\hline
		\multicolumn{1}{c|}{$m,r,d$}& \multicolumn{1}{|c}{FI(det)}   & \multicolumn{1}{c}{FI$^+$(det)} & \multicolumn{1}{c}{BI(det)} & \multicolumn{1}{c}{FI(det)}   & \multicolumn{1}{c}{FI$^+$(det)} & \multicolumn{1}{c}{BI(det)} \\
		&   & &  & \multicolumn{1}{c}{FI(norm)}   & \multicolumn{1}{c}{FI(norm)} & \multicolumn{1}{c}{FI(norm)} \\
		\hline
		1000,050,0.25 & 3.633 & 2.067 & 1.633 & 4.467  & 4.467  & 4.533  \\
		1000,050,0.50 & 4.833 & 3.267 & 2.267 & 12.800 & 12.700 & 12.700 \\
		1000,050,1.00 & 7.133 & 3.767 & 3.000 & 11.667 & 11.667 & 11.633 \\
		1000,100,0.25 & 9.233 & 5.600 & 3.967 & 28.733 & 28.833 & 28.867 \\
		1000,100,0.50 & 3.533 & 1.733 & 1.500 & 6.200  & 6.200  & 6.200  \\
		1000,100,1.00 & 3.367 & 2.000 & 1.567 & 16.967 & 16.933 & 16.900 \\
		2000,100,0.25 & 7.167 & 3.900 & 3.100 & 15.533 & 15.533 & 15.533 \\
		2000,100,0.50 & 6.433 & 3.433 & 2.500 & 40.733 & 40.467 & 40.267 \\
		2000,100,1.00 & 3.100 & 1.867 & 1.333 & 12.767 & 12.767 & 12.767 \\
		2000,200,0.25 & 3.333 & 1.967 & 1.533 & 25.333 & 25.333 & 25.033 \\
		2000,200,0.50 & 3.000 & 1.833 & 1.333 & 30.000 & 29.900 & 29.867 \\
		2000,200,1.00 & 3.200 & 1.933 & 1.467 & 60.700 & 60.067 & 59.800 \\
		\hline
	\end{tabular}
	\caption{Number of swaps (Medium) (symmetric generalized inverse) }\label{tab:3Sym}
\end{table}

\section{Case Study}\label{seccase}

In this section, we report on a case study that we 
undertook on a real-world data set. We sought to validate our techniques,
in the ah-symmetric case. We applied the ideas as we described,
but additionally in a somewhat more general way.
In our presentation, we always worked with $r$ equal to the
rank of the input data matrix. But we can also take \emph{smaller} $r$, with the benefit of gaining even sparser (block) ah-symmetric
generalized inverses. Of course, with smaller $r$, we give up something in the least-squares fit, and so we explored this trade off in our case study.

We applied our techniques to the ``Communities and Crime Data Set'' (\url{https://archive.ics.uci.edu/ml/datasets/Communities+and+Crime})
obtained  from
the UCI Machine Learning Repository, at  the Center for Machine Learning and Intelligent Systems, University of
California at Irvine. The data set combines socioeconomic data from the 1990 US Census, law enforcement data from the 1990 US LEMAS (Law Enforcement Management and Administrative Statistics) survey, 
 and crime data from the 1995 FBI UCR (Uniform Crime Reporting) Program.
 
 The data is for $1,994$ communities, $128$ variables: 122 predictive, 5 non-predictive, 1 goal.
 The goal variable is the total number of violent crimes per 100,000 population.
 Data was incomplete for one community and for 22 predictive variables.
 So we settled on a (rather dense) $(m=1,993)\times (n=100)$ matrix $A$ to work with,
 and a corresponding goal $b\in\mathbb{R}^m$. 
 Considering the real singular value decomposition $A=\sum_{i=1}^n \sigma_i u_i v_i^\top$  of $A$,
 with $\sigma_1\geq \sigma_2\geq \cdots \geq \sigma_n \geq 0$, orthonormal $u_i\in\mathbb{R}^m$, and orthonormal $v_i\in\mathbb{R}^n$,
 we define low-rank versions of $A$, namely $A_r:= \sum_{i=1}^{r} \sigma_i u_i v_i^\top$,
 for $r=50,49,\ldots,10$. 
 In Figure \ref{fig_casestudySV}, we plot the singular values of $A$.
 We note that $A_{r}$ is
 the closest rank-$r$ matrix in Frobenius norm to $A$. For $r=50$, we have 
 $\|A_{50}\|_F^2=40,353$ as compared to  $\|A\|_F^2=40,480$, so we can say that 
 $A_{50}$ is a very close approximation of $A$. 
 But considering the sharp decay in the plot, we can even say that 
 $A_{r}$ is a good approximation of $A$ for all $r=50,49,\ldots,10$.

% \jon{
%  \begin{center}
%      LET'S PUT A PLOT HERE OF THE $\sigma_i$ vs $i$, for $i=1,2,\ldots,n$, so we can justify our choice of 50
%  \end{center}
%  }

 \begin{figure}[!ht]
\centering
\includegraphics[width=0.95\textwidth]{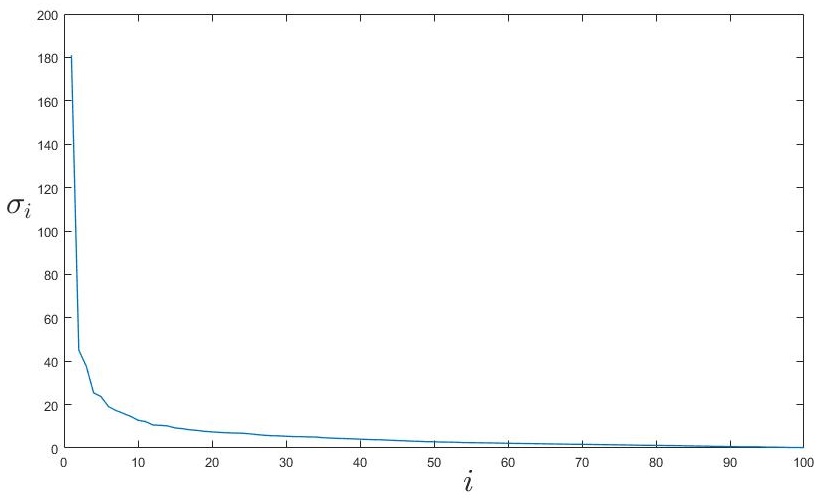}
\caption{Singular values of $A$}\label{fig_casestudySV}
\end{figure}

% Notation:
% \begin{itemize}
%     \item $A$: $1993\times 100$ matrix given by all rows and columns of the available data, with no missing entries.
%     \item $A_r:=U[:,1:r]\Sigma[1:r,1:r]V[1:r,:]$: $1993\times r$ matrix, where  $A=U\Sigma V^\top$ is the real singular value decomposition of $A$.
% \end{itemize}

\noindent
In Figures \ref{fig_casestudy12}--\ref{fig_casestudy34}, we present the  R-squared\footnote{percentage of variation of the goal variable that is linearly explained
by the regression} statistical measure computed  by considering:
\begin{itemize}
    \item all 100 columns of the matrix $A_{50}$: indicated by the gold line in all four figures.
    The R-squared for $A_{50}$ is 0.6690, only a bit lower than the $R$-squared of 0.6957 for $A$;
    \item the $r$ columns of $A_{50}$ selected by by applying our local search to $A_{50}$, for $r=50,49,\ldots,10$: indicated by the blue plot in Figures \ref{fig_casestudy12} and \ref{fig_casestudy13};
       \item the $r$ columns of $A_{50}$ selected by applying our local search to $A_r$, for $r=50,49,\ldots,10$, indicated by the orange plot in Figures \ref{fig_casestudy13} and \ref{fig_casestudy34};
      \item the $r$ columns of $A$ selected by by applying our local search to $A_{50}$, for $r=50,49,\ldots,10$: indicated by the black plot in Figures \ref{fig_casestudy12} and \ref{fig_casestudy24};
       \item the $r$ columns of $A$ selected by applying our local search to $A_r$, for $r=50,49,\ldots,10$, indicated by the red plot in Figures \ref{fig_casestudy24} and \ref{fig_casestudy34};     
\end{itemize}

The local search procedure applied in these experiments was FI$^+$(det), which had the best performance on the tests presented in \S \ref{results_ahrg}.

We wish to emphasize that our local searches do not consider the goal variable.
Rather, our local searches aim to find a good \emph{small} set of columns of $A$ (corresponding to a sparse block-structured ah-symmetric generalized inverse), 
that can be good for \emph{any} realizations of the goal variable.

Figure \ref{fig_casestudy12} considers our local search for finding $r$ columns, 
 for $r=50,49,\ldots,10$, always applying the
local search to $A_{50}$.  In the end, we compare the performance of the selected 
columns indices, doing the regressions on the chosen columns of both $A_{50}$ and $A$. 
We can see that it does not matter much whether we do the regressions on $A_{50}$ or $A$; that is,
$A_{50}$ is a reasonable low-rank approximation of $A$. Also, we 
see only a slow deterioration in R-squared as we decrease $r$ (using either  $A_{50}$ or $A$);
so our algorithms do well even for $r=10$. 

Figure \ref{fig_casestudy13} 
 compares two different local searches for finding $r$ columns, for $r=50,49,\ldots,10$.
One always does the local search on $A_{50}$, while the other does it on $A_r$. In the end,
we evaluate the local searches by doing  regressions on the chosen columns of $A_{50}$.
We can see that both local searches
perform similarly (possibly the one using $A_r$ is a bit better), with slow deterioration in R-squared as we decrease $r$. 

Figure \ref{fig_casestudy24} again compares the two local searches, 
but we evaluate the local searches by doing  regressions on the chosen columns of $A$ (rather than $A_{50}$.
We reach the same conclusion as we did for Figure \ref{fig_casestudy13}. 

Finally,   Figure \ref{fig_casestudy34}, 
considers our local search for for finding $r$ columns, 
 for $r=50,49,\ldots,10$, always applying the
local search to $A_{r}$. In the end, we compare the performance of the selected 
columns indices, doing the regressions on both the chosen columns of $A_{50}$ but also on $A$.
We reach the same conclusion as we did for Figure \ref{fig_casestudy12}.

Overall, we find that for our local-search algorithm for finding $r=50$, $49,\ldots,10$ good regression variables: 
(i) it is quite robust to versions of the input matrix,
working well on $A_{50}$ or $A_r$, (ii) it is quite robust to how we evaluate the chosen $r$ column indices
(treating either $A$ or its low-rank counterpart $A_{50}$ as ``the truth''), and (iii)
we get very little deterioration in the quality of the least-square fits (as measured by R-squared),
as we decrease $r$.

% \begin{figure}[!ht]
% \centering
% \includegraphics[width=0.95\textwidth]{}
% \caption{Case Study}\label{fig_casestudy}
% \end{figure}

%\FloatBarrier

\begin{figure}[!ht]
\centering
\includegraphics[width=1.00\textwidth]{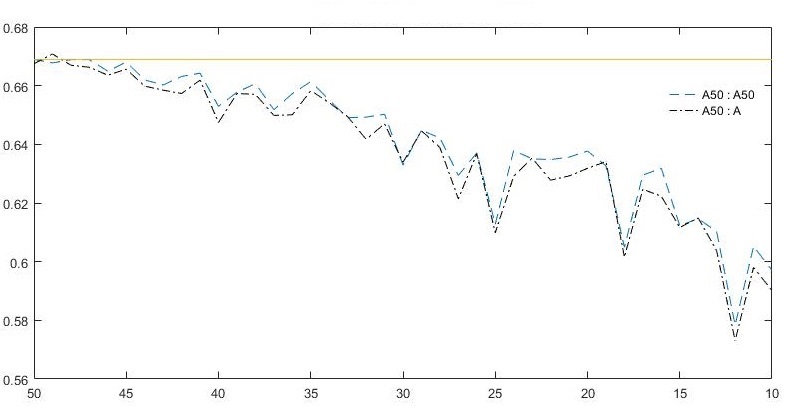}
\caption{Local search on $A_{50}$, Regression on $A_{50}/A$}\label{fig_casestudy12}
\end{figure}

\begin{figure}[!ht]
\centering
\includegraphics[width=1.00\textwidth]{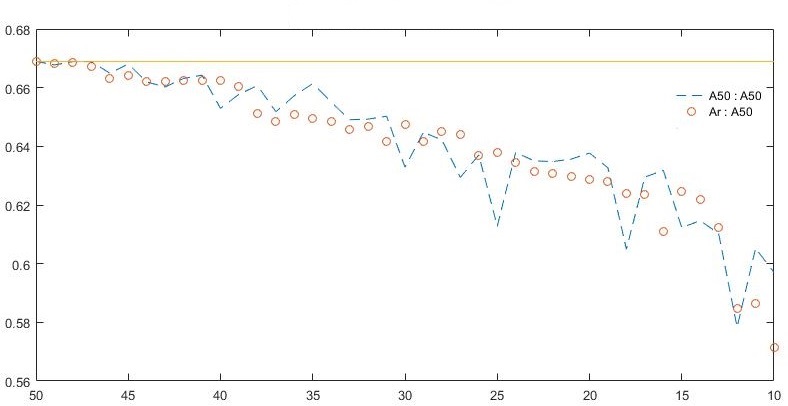}
\caption{Local search on $A_{50}/A_r$, Regression on $A_{50}$}\label{fig_casestudy13}
\end{figure}

\FloatBarrier

\begin{figure}[!ht]
\centering
\includegraphics[width=1.00\textwidth]{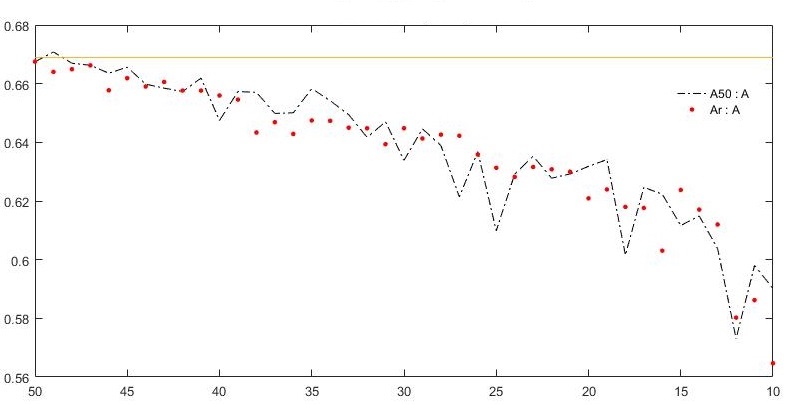}
\caption{Local search on $A_{50}/A_r$, Regression on $A$}\label{fig_casestudy24}
\end{figure}

\begin{figure}[!ht]
\centering
\includegraphics[width=1.00\textwidth]{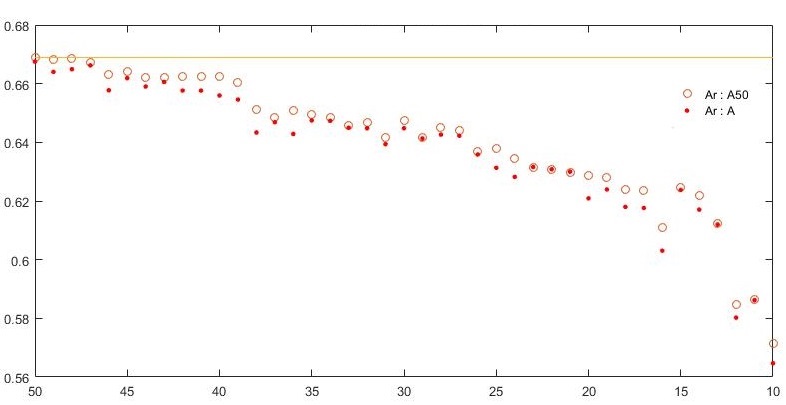}
\caption{Local search on $A_{r}$, Regression on $A_{50}/A$}\label{fig_casestudy34}
\end{figure}

%\FloatBarrier

%\subsection{Comparing the application of the local search to $A_{50}$ and to $A_r$} 
%Mesmas escolhas e escolhas diferentes}

Finally, we looked a bit more carefully at the attributes selected by the local searches
for $r=20$, chosen as giving a good level of prediction for a rather low rank. 
% In the following, we compare the attributes selected by our local search for $r=20$, when we apply it to $A_{50}$ to when we apply it to $A_{20}$. 
Interesting, there is only agreement on twelve of the twenty attributes selected by the two local searches, 
while the models have 
very similar predictive capability. 
We can see that drawing causal conclusions from the selected attributes would be very dubious.

\section{Concluding remarks}\label{sec:conc}

  We have demonstrated that the local-search procedures presented in \cite{FampaLee2018ORL,XuFampaLee}  can be successfully implemented to construct sparse,  block-structured reflexive generalized inverses with different properties.
  We find that the performance (1-norm achieved) is much better than tight worst-case guarantees.
   Overall, we find that the search procedures are very robust in terms of
   many of the algorithmic choices that need to be made. For scaling purposes,
   we found that it is necessary to be mindful of the numerics and of economizing when
   seeking local improvements, and calculating initial solutions efficiently
   proves to be a surprisingly difficult practical issue.
%For the experiments done, all reflexive generalized inverses obtained by the local searches have the exact number of non-zero entries given by the number of elements in the block defined by the construction procedure, i.e., the initial solutions of the searches have the same densities, or zero-norms, as the final solutions. Compared to the M-P pseudoinverses these densities are much smaller.
%On the other hand, the local-search procedures are efficient in reducing the 1-norm of the starting matrices, specially when the input matrices are  dense.
%
%For the random matrices used in our numerical experiments, the ratios between the 1-norm of the block constructed reflexive generalized inverses,  ah-symmetric reflexive generalized inverses, and  symmetric reflexive generalized inverses, and the minimum 1-norm of the generalized inverses, ah-symmetric reflexive generalized inverses, and symmetric generalized inverses, respectively, are much smaller than the maximum ratios proven  in \cite{FampaLee2018ORL,XuFampaLee}, for the searches based on determinants. Despite these results, it is proven in \cite{XuFampaLee} that the bounds are tight, with worst cases presented.

 \cite{FampaLee2018ORL,XuFampaLee} established that the ratios between the norms of the  solutions of the local searches and the LP problems $P_1$, $P_{123}$, and $P_1^{sym}$ are bounded by $r^2$, $r$, and $r^2$, respectively, when considering generalized inverses, ah-symmetric generalized inverses, and symmetric generalized inverses.
We observed in our numerical  experiments  that the average ratios  were much smaller than these worst-case upper bounds, and also that they were smaller for the ah-symmetric case.  This can be explained by the fact that the upper bound is smaller for the ah-symmetric generalized inverses
($r$ vs. $r^2$), but also because in this case, we could include the linearized constraints for property \ref{property2} in the LP problem $P_{123}$, increasing its optimal objective function value.

%\item The analyzes made for the three cases of generalized inverses are very similar. The main difference between them is the higher computational time for the case of symmetric matrices. In the case of generalized and ah-symmetric generalized inverses,  the algorithms run faster due to the efficient implementations, with the consideration of the remarks in the paper. The results could not be directly applied to the symmetric case, where a row and a column of the matrix are exchanged at each iteration of the procedures.  It would be interesting to investigate in future work, more efficient ways of computing determinants of the updated matrices in this last cases, instead of computing them from scratch. Due to the cost of computing the determinant of the updated matrix in the symmetric case, it is better to start some of the procedures constructing the initial matrix with the more expensive Greedy procedure.  For the two first cases on the other hand, the determinant local searches run very fast even on the category of large instances initialized with the Greedy Light algorithm, and the main overhead is the computation of the initial matrix. Therefore, another direction to investigate is the development of alternative and faster procedures to generate the initial matrix.

 Comparing the three local-search procedures based on the determinant, we conclude that they converge to solutions of very similar quality.   The best improvement approach (`BI(det)') is too expensive and can be discarded. In general, the procedure `FI$^{+}$(det)'  had slightly better times than `FI(det)'.

 The computational time to solve the LP problems considered is much larger than the times of the local searches, and increases much faster than the times of the local searches when the dimension, the rank, or the density of the matrices increases. So we conclude that LP is not a competitive alternative to the local searches, even if we only cared about running time.
     An interesting point is that the most costly LP solution is given for problem $P_{123}$, with more constraints to model ah-symmetric generalized inverses. On the other side, the local-search procedures to construct these matrices are the fastest ones, as the searches are only applied to the columns of the matrices, for a given set of linear independent rows.
% , this simpler local search  can also explain the slightly smaller improvement obtained over the initial matrices, 
% and also the fact that we more often converge to  different solutions  when starting the procedures from different matrices (constructed by
% the initializations, Greedy and Greedy Light).

 The local-search procedures based on the 1-norm were considered with the purpose of determining
 whether or not  the searches based on the determinant could be still  improved with respect to the 1-norm of the matrices. For the ah-symmetric case, we saw only a relatively modest improvement in
 1-norm. For generalized inverses we saw better improvements in 1-norm, and for 
 symmetric generalized inverses even better. We can conclude that for the ah-symmetric case,
 1-norm search is never recommended, and for the others, perhaps they could be considered if
 one is willing to incur a substantial computational cost. 
%  However, we conclude that the improvement is small. This confirms the good quality of the solutions obtained with the determinant searches in a much smaller computational time, when comparing to the norm searches.

 The running times of the local-search procedures based on the determinants were critically  decreased with the use of the results pointed out in our remarks (Remarks \ref{remcol}, \ref{remrow}, \ref{remnormcol}, \ref{remnormcolpinv}, \ref{remcolsym}) which indicate how to efficiently update the determinant of the matrices after the rows and columns swaps at each iteration. A na{\"\i}ve implementation, instead recomputing determinants from scratch, would not allow to scale to large instances.

A significant part of our effort spent in this research was dedicated to developing a good algorithm to construct an initial solution to our local searches. The computation of an $r\times r$ non-singular submatrix of a rank-$r$ matrix turned out to be a challenge when considering our large, and even medium-sized test instances. The procedure proposed had a very good performance in our numerical experiments and  its Matlab  implementation is now available through Mathworks.  % {\color{red}A big overhead for our algorithms is the generation of the initial matrix for the local searches. Greedy Light runs much faster than Greedy, but alternative faster numerically-stable  procedures should be investigated.}

\section*{Acknowledgments}
M. Fampa was supported in part by CNPq grant 303898/2016-0.
J. Lee was supported in part by ONR grant N00014-17-1-2296.
G. Ponte was supported in part by CNPq PIBIC scholarship 149149/2020-4.  
M. Fampa, J. Lee and L. Xu were supported in part by funding from the Simons
Foundation and the Centre de Recherches Math\'ematiques, through the Simons-CRM
scholar-in-residence program.

% \section*{Conflict of interest}
% The authors declare that they have no conflict of interest.

% ----------------------------------------------------------------
%\bibliographystyle{amsplain}

% BibTeX users please use one of
%\bibliographystyle{spbasic}      % basic style, author-year citations
%\bibliographystyle{spmpsci}      % mathematics and physical sciences
\bibliographystyle{plain}
%\bibliographystyle{spphys}       % APS-like style for physics
%\bibliography{}   % name your BibTeX data base
\bibliography{ginv}

\end{document}